\theoremstyle{plain}
\newtheorem{theorem}{Theorem}[section]
\newtheorem{proposition}[theorem]{Proposition}
\newtheorem{lemma}[theorem]{Lemma}
\newtheorem{corollary}[theorem]{Corollary}
\theoremstyle{definition}
\newtheorem{definition}[theorem]{Definition}
\newtheorem{example}[theorem]{Example}
\newcommand{\nc}{\newcommand}
\nc{\on}{\operatorname}
\nc{\Q}{\mathbb{Q}}
\nc{\Z}{\mathbb{Z}}
\nc{\cl}{\mathrm{cl}}
\nc{\fraka}{{\mathfrak a}} \nc{\bba}{{\mathbf a}}
\nc{\frakb}{{\mathfrak b}}
\nc{\frakc}{{\mathfrak c}}
\nc{\frakd}{{\mathfrak d}}
\nc{\frake}{{\mathfrak e}}
\nc{\frakf}{{\mathfrak f}}
\nc{\frakg}{{\mathfrak g}}
\nc{\frakh}{{\mathfrak h}}
\nc{\fraki}{{\mathfrak i}}
\nc{\frakj}{{\mathfrak j}}
\nc{\frakk}{{\mathfrak k}}
\nc{\frakl}{{\mathfrak l}}
\nc{\frakm}{{\mathfrak m}}
\nc{\frakn}{{\mathfrak n}}
\nc{\frako}{{\mathfrak o}}
\nc{\frakp}{{\mathfrak p}}
\nc{\frakq}{{\mathfrak q}}
\nc{\frakr}{{\mathfrak r}}
\nc{\fraks}{{\mathfrak s}}
\nc{\frakt}{{\mathfrak t}}
\nc{\fraku}{{\mathfrak u}}
\nc{\frakv}{{\mathfrak v}}
\nc{\frakw}{{\mathfrak w}}
\nc{\frakx}{{\mathfrak x}}
\nc{\fraky}{{\mathfrak y}}
\nc{\frakz}{{\mathfrak z}}
\nc{\frakA}{{\mathfrak A}}
\nc{\frakB}{{\mathfrak B}}
\nc{\frakC}{{\mathfrak C}}
\nc{\frakD}{{\mathfrak D}}
\nc{\frakE}{{\mathfrak E}}
\nc{\frakF}{{\mathfrak F}}
\nc{\frakG}{{\mathfrak G}}
\nc{\frakH}{{\mathfrak H}}
\nc{\frakI}{{\mathfrak I}}
\nc{\frakJ}{{\mathfrak J}}
\nc{\frakK}{{\mathfrak K}}
\nc{\frakL}{{\mathfrak L}}
\nc{\frakM}{{\mathfrak M}}
\nc{\frakN}{{\mathfrak N}}
\nc{\frakO}{{\mathfrak O}}
\nc{\frakP}{{\mathfrak P}}
\nc{\frakQ}{{\mathfrak Q}}
\nc{\frakR}{{\mathfrak R}}
\nc{\frakS}{{\mathfrak S}}
\nc{\frakT}{{\mathfrak T}}
\nc{\frakU}{{\mathfrak U}}
\nc{\frakV}{{\mathfrak V}}
\nc{\frakW}{{\mathfrak W}}
\nc{\frakX}{{\mathfrak X}}
\nc{\frakY}{{\mathfrak Y}}
\nc{\frakZ}{{\mathfrak Z}}
\nc{\bbA}{{\mathbb A}}
\nc{\bbB}{{\mathbb B}}
\nc{\bbC}{{\mathbb C}}
\nc{\bbD}{{\mathbb D}}
\nc{\bbE}{{\mathbb E}}
\nc{\bbF}{{\mathbb F}} \nc{\bbf}{{\mathbf f}}
\nc{\bbG}{{\mathbb G}}
\nc{\bbH}{{\mathbb H}}
\nc{\bbI}{{\mathbb I}}
\nc{\bbJ}{{\mathbb J}}
\nc{\bbK}{{\mathbb K}}
\nc{\bbL}{{\mathbb L}}
\nc{\bbM}{{\mathbb M}}
\nc{\bbN}{{\mathbb N}}
\nc{\bbO}{{\mathbb O}}
\nc{\bbP}{{\mathbb P}}
\nc{\bbQ}{{\mathbb Q}}
\nc{\bbR}{{\mathbb R}}
\nc{\bbS}{{\mathbb S}}
\nc{\bbT}{{\mathbb T}}
\nc{\bbU}{{\mathbb U}}
\nc{\bbV}{{\mathbb V}}
\nc{\bbW}{{\mathbb W}}
\nc{\bbX}{{\mathbb X}}
\nc{\bbY}{{\mathbb Y}}
\nc{\bbZ}{{\mathbb Z}}
\nc{\calA}{{\mathcal A}}
\nc{\calB}{{\mathcal B}}
\nc{\calC}{{\mathcal C}}
\nc{\calD}{{\mathcal D}}
\nc{\calE}{{\mathcal E}}
\nc{\calF}{{\mathcal F}}
\nc{\calG}{{\mathcal G}}
\nc{\calH}{{\mathcal H}}
\nc{\calI}{{\mathcal I}}
\nc{\calJ}{{\mathcal J}}
\nc{\calK}{{\mathcal K}}
\nc{\calL}{{\mathcal L}}
\nc{\calM}{{\mathcal M}}
\nc{\calN}{{\mathcal N}}
\nc{\calO}{{\mathcal O}}
\nc{\calP}{{\mathcal P}}
\nc{\calQ}{{\mathcal Q}}
\nc{\calR}{{\mathcal R}}
\nc{\calS}{{\mathcal S}}
\nc{\calT}{{\mathcal T}}
\nc{\calU}{{\mathcal U}}
\nc{\calV}{{\mathcal V}}
\nc{\calW}{{\mathcal W}}
\nc{\calX}{{\mathcal X}}
\nc{\calY}{{\mathcal Y}}
\nc{\calZ}{{\mathcal Z}}
\nc{\scrA}{{\mathscr A}}
\nc{\scrB}{{\mathscr B}}
\nc{\scrC}{{\mathscr C}}
\nc{\scrD}{{\mathscr D}}
\nc{\scrE}{{\mathscr E}}
\nc{\scrF}{{\mathscr F}}
\nc{\scrG}{{\mathscr G}}
\nc{\scrH}{{\mathscr H}}
\nc{\scrI}{{\mathscr J}}
\nc{\scrJ}{{\mathscr I}}
\nc{\scrK}{{\mathscr K}}
\nc{\scrL}{{\mathscr L}}
\nc{\scrM}{{\mathscr M}}
\nc{\scrN}{{\mathscr N}}
\nc{\scrO}{{\mathscr O}}
\nc{\scrP}{{\mathscr P}}
\nc{\scrQ}{{\mathscr Q}}
\nc{\scrR}{{\mathscr R}}
\nc{\bnu}{{\bar{ \nu}}}
\nc{\olO}{\bar{\calO}}
\nc{\al}{{\alpha}} 
\nc{\be}{{\beta}}
\nc{\ga}{{\gamma}} \nc{\Ga}{{\Gamma}}
\nc{\hGa}{\hat{\Gamma}}
\nc{\ve}{{\varepsilon}} 
\nc{\la}{{\lambda}} \nc{\La}{{\Lambda}}
\nc{\om}{\omega} \nc{\Om}{\Omega} 
\nc{\sig}{{\sigma}} \nc{\Sig}{{\Sigma}}
\nc{\dR}{{\mathrm{dR}}}
\nc{\Perf}{{\mathrm{Perf}}}
\nc{\Gm}{{\mathbb{G}_m}}
\nc{\Spa}{\on{{Spa}}}
\nc{\Spd}{\on{{Spd}}}
\nc{\tnb}{\psi_{\rm tame}}
\nc{\oM}{\overline{{M}}}
\nc{\op}{{\on{op}}}
\nc{\ad}{{\on{ad}}}
\nc{\alg}{{\on{alg}}}
\nc{\Ad}{{\on{Ad}}}
\nc{\Adm}{{\on{Adm}}} \nc{\aff}{{\on{af}}}
\nc{\Aut}{{\on{Aut}}}
\nc{\Bun}{{\on{Bun}}}
\nc{\cha}{{\on{char}}}
\nc{\der}{{\on{der}}}
\nc{\Der}{{\on{Der}}}
\nc{\diag}{{\on{diag}}}
\nc{\End}{{\on{End}}}
\nc{\Fl}{{\calF\!\ell}}
\nc{\Tr}{{\on{Transp}}}
\nc{\TR}{{\calT\!\calR}}
\nc{\Gal}{{\on{Gal}}}
\nc{\Gr}{{\on{Gr}}}
\nc{\Hk}{{\on{Hk}}}
\nc{\rH}{{\on{H}}}
\nc{\Hom}{{\on{Hom}}}
\nc{\IC}{{\on{IC}}}
\nc{\id}{{\on{id}}}
\nc{\Id}{{\on{Id}}}
\nc{\ind}{{\on{ind}}}
\nc{\Ind}{{\on{Ind}}}
\nc{\Lie}{{\on{Lie}}}
\nc{\Pic}{{\on{Pic}}}
\nc{\pr}{{\on{pr}}}
\nc{\Res}{{\on{R}}}
\nc{\res}{{\on{res}}} \nc{\Sat}{{\on{Sat}}}
\nc{\spc}{{\on{sc}}}
\nc{\drv}{{\on{der}}}
\nc{\sgn}{{\on{sgn}}}
\nc{\Spec}{{\on{Spec}}}\nc{\Spf}{\on{Spf}} 
\nc{\Sph}{\on{Sph}}
\nc{\St}{{\on{St}}}
\nc{\tr}{{\on{tr}}}
\nc{\Mod}{{\mathrm{-Mod}}}
\nc{\Hilb}{{\on{Hilb}}} 
\nc{\Ext}{{\on{Ext}}} 
\nc{\vs}{{\on{Vec}}}
\nc{\ev}{{\on{ev}}}
\nc{\nO}{{\breve{\calO}}}
\nc{\tS}{{\tilde{S}}}
\nc{\spe}{{\on{sp}}}
\nc{\loc}{{\on{loc}}}
\nc{\co}{\colon}
\nc{\dia}{{\diamondsuit}}
\nc{\nscrR}{{\mathscr{R}^{\on{nr}}}}
\nc{\GL}{{\on{GL}}}
\nc{\Gl}{\on{Gl}} 
\nc{\GSp}{{\on{GSp}}}
\nc{\gl}{{\frakg\frakl}}
\nc{\SL}{{\on{SL}}} 
\nc{\SU}{{\on{SU}}} 
\nc{\SO}{{\on{SO}}}
\nc{\PGL}{{\on{PGL}}}
\nc{\Conv}{{\on{Conv}}}
\nc{\Rep}{{\on{Rep}}}
\nc{\Dom}{{\on{Dom}}}
\nc{\red}{{\on{red}}}
\nc{\act}{{\on{act}}}
\nc{\nr}{{\on{nr}}}
\nc{\ctf}{{\on{ctf}}}
\nc{\str}{{\on{-}}} 
\nc{\os}{{\bar{s}}}
\nc{\oeta}{{\bar{\eta}}}
\nc{\hookto}{\hookrightarrow}
\nc{\longto}{\longrightarrow}
\nc{\leftto}{\leftarrow}
\nc{\onto}{\twoheadrightarrow}
\nc{\lonto}{\twoheadleftarrow}
\nc{\pot}[1]{ [\hspace{-0,5mm}[ {#1} ]\hspace{-0,5mm}] }
\nc{\rpot}[1]{ (\hspace{-0,7mm}( {#1} )\hspace{-0,7mm}) }
\numberwithin{equation}{section}
\begin{document}
	
	\title{Distributions and normality theorems}
	
	\author{Jo\~ao Louren\c{c}o}

	\address{Mathematisches Institut, Universität Münster, Einsteinstrasse 62, Münster, Germany}
	\email{j.lourenco@uni-muenster.de}
	
	\maketitle
	
	\begin{abstract}
We derive a Serre presentation of distribution algebras of loop groups in characteristic $p$ and apply it to give a new proof of the normality of Schubert varieties inside parahoric affine Grassmannians, for all connected reductive groups whose fundamental group is $p$-torsion free. 
	\end{abstract}

\tableofcontents

\section{Introduction}

Let $F$ be a field of characteristic $p$ and $G$ be a connected reductive group over $F$. If we want to understand the infinitesimal behavior of $G$ near the identity, it is well known from modular representation theory that $\mathrm{Lie}(G)$ is insufficient. Indeed, the category of $\mathrm{Lie}(G)$-modules barely captures information on the category of $G$-representations. Instead, what one ought to consider is the $F$-algebra $\mathrm{Dist}(G)$ of distributions of $G$ at the origin consisting of higher differential operators. This object is a sort of twisted divided power algebra and an explicit presentation in terms of generators and relations was given by Takeuchi \cite{Tak83a,Tak83b} for split $G$. The idea for producing these generators and relations is to take the distributions of tori and root groups and evaluating on the rules of multiplications between those subgroups of $G$. In particular, the list of relations is not finite, and is best expressed in terms of generating series. 

The point of $\mathrm{Dist}(G)$ is that it carries the same information as the formal group $\hat{G}$ given as the completion of $G$ at the origin but in such a way that we get a covariant algebra object instead of a covariant geometric space (or contravariant algebra object by passing to formal sections). We became interested in it because of how it naturally fits into the study of loop groups. Assume from now on that $F=k\rpot{t}$ is a local field with finite residue field $k$. The loop group of $G$ is the group object in ind-schemes $\Res_{F/k}G$ whose functor of points is given by $R\mapsto G(R\rpot{t})$. Similarly, if we let $\calG$ be a parahoric model of $G$ over the ring of integers $O=k\pot{t}$ in the sense of Bruhat--Tits \cite{BT72,BT84}, then we have the group object $\Res_{O/k}\calG$ in schemes given by $R \mapsto \calG(R\pot{t})$. The affine flag variety $\mathrm{Gr}_\calG$ is the étale quotient $\Res_{F/k}G/\Res_{O/k}\calG$ in the category of ind-schemes and classifies modifications of $\calG$-torsors over $O$ away from the residue field $k$. The Bruhat stratification yields Schubert varieties $\mathrm{Gr}_{\calG,l,\leq w}$ defined over the reflex field $l$ of the element $w$ in the absolute Iwahori--Weyl group. 

\begin{theorem}\label{teo1_intro}
	If $p\nmid \# \pi_1(G_{\mathrm{der}})$, then $\mathrm{Gr}_{\calG,l,\leq w}$ is normal, Cohen--Macaulay, rational and globally $+$-regular.
\end{theorem}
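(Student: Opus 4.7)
\medskip

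\textbf{Proof plan.} The plan is to reduce to the split case, realise each Schubert variety as the image of a Bott--Samelson resolution built from the parahorics attached to simple affine reflections, and then transfer all four properties from the smooth Bott--Samelson along the contraction by using the Serre presentation to obtain the required cohomological vanishing.

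First, Galois descent along $\bar{k}/l$ should reduce the question to a split $G$ and an element $w$ of the absolute Iwahori--Weyl group, since normality, Cohen--Macaulayness, rational singularities and global $+$-regularity are all stable under finite étale descent. Fixing a reduced expression $\underline{w} = (s_1,\dots,s_n)$ for $w$, I would form the Bott--Samelson space
\[
\mathrm{Dem}_{\underline{w}} = \calP_{s_1} \times^{\calI} \cdots \times^{\calI} \calP_{s_n}/\calI,
\]
where $\calI$ denotes the Iwahori and $\calP_{s_i}$ the minimal parahoric attached to $s_i$. This space is an iterated $\mathbb{P}^1$-bundle, hence smooth, rational, Cohen--Macaulay and globally $F$-regular, and it carries a proper birational morphism $\pi_{\underline{w}}\colon \mathrm{Dem}_{\underline{w}}\to \mathrm{Gr}_{\calG,\leq w}$.

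Second, I would invoke the Serre presentation of $\mathrm{Dist}(\Res_{F/k}G)$ derived earlier in the paper to promote the action of each simple $\mathrm{SL}_2$-subgroup to an action on the global sections $H^0(\mathrm{Gr}_{\calG,\leq w},\calL)$ of any $\calI$-equivariant ample line bundle $\calL$. Induction on $\ell(w)$, exploiting the $\mathbb{P}^1$-fibration at the last step of the Bott--Samelson tower together with the distribution algebra action, then yields the Demazure vanishing $R^i\pi_{\underline{w},*}\pi_{\underline{w}}^*\calL=0$ for $i>0$ and the surjectivity of restriction on $H^0$; specialising to $\calL=\calO$ gives $R\pi_{\underline{w},*}\calO_{\mathrm{Dem}_{\underline{w}}} = \calO_{\mathrm{Gr}_{\calG,\leq w}}$, which implies normality and rationality, while Cohen--Macaulayness then follows by Grothendieck duality applied to the birational contraction from a smooth variety. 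For global $+$-regularity, in equicharacteristic $p$ this coincides with global $F$-regularity, which I would establish via a compatible Frobenius splitting of $\mathrm{Gr}_\calG$ along all Schubert subvarieties; such a splitting is manufactured from the distribution algebra action on dual Weyl modules by a Mathieu-style argument in the spirit of Lauritzen--Raben-Pedersen--Thomsen and Brion--Kumar.

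The hard part is the cohomological step: ensuring that the distribution algebra really acts on sections of equivariant line bundles on Schubert varieties, compatibly with the Bott--Samelson fibrations, so that the inductive Demazure vanishing goes through. This is exactly the role of the Serre presentation, and the hypothesis $p\nmid \#\pi_1(G_{\mathrm{der}})$ enters precisely here because it ensures that the isogeny from the simply-connected cover induces an isomorphism on the distribution algebras of the minimal parahorics, preserving the Serre-type relations that the inductive step exploits.
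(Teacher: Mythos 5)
Your proposal follows the classical Faltings--Mathieu route (Bott--Samelson resolution, Demazure vanishing, Frobenius splitting à la Lauritzen--Raben-Pedersen--Thomsen / Brion--Kumar), and this is precisely the strategy the paper is designed to \emph{replace}, not to re-derive via the Serre presentation. The paper's architecture is quite different: normality, Cohen--Macaulayness, rationality and global $+$-regularity of the \emph{semi-normalized} Schubert varieties $\mathrm{Gr}_{\calG,l,\leq w}^{\mathrm{sn}}$ are taken as a black box from \Cref{thm_splinter} (established in [FHLR22, CL24] via the Bhatt splinter criterion with inversion of adjunction, not via Mehta--Ramanathan or Mathieu), and the new contribution of this paper is the single remaining claim that the Schubert variety \emph{equals} its semi-normalization. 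That claim is reduced, via \Cref{lem_reduction_step_form_etale}, to proving that $\Res_{F/k}^{\mathrm{sn}}G_{\mathrm{sc}}\to \Res_{F/k}G_{\mathrm{sc}}$ is formally étale, which by \Cref{lem_dist_iso_form_etale} is a statement that the map induces an isomorphism on $\mathrm{Dist}$. The Serre presentation (\Cref{thm_serre_pres_loop}) is then used only to verify this distribution isomorphism, by checking generators (\Cref{lem_surj_dist_snloop}) and relations (\Cref{prop_seminormal_reducao_rk1}), reducing to rank $1$; it is not used to manufacture a distribution action on $H^0$ of line bundles on Demazure varieties.

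There are two concrete gaps in your argument. First, the pivotal step --- promoting the Serre presentation to an action of $\mathrm{Dist}(\Res_{F/k}G)$ on $H^0(\mathrm{Gr}_{\calG,\leq w},\calL)$ compatible with the Bott--Samelson tower --- is not justified and in fact cannot go through as stated: the Schubert variety is only stable under the jet group, not under the full loop group, and the Mathieu-style argument you invoke traditionally needs an integral Kostant-type form of the distribution algebra over a two-dimensional base such as $W(k)\pot{t}$. The paper's introduction explains that such an integral lift does not exist for odd unitary groups when $p=2$, which is exactly the case all prior proofs (Faltings, Pappas--Rapoport, Zhu, FHLR22) failed to cover; you would hit the same wall. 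Second, your proposal does not address the rank $1$ reduction nor the genuinely hard case $G$ a restriction of scalars of $\mathrm{SU}_3$ with $p=2$: the paper handles $\mathrm{SL}_2$ by a direct lci computation (\Cref{prop_lci}) and $\mathrm{SU}_3$ only indirectly, via a formal flatness argument on the Beilinson--Drinfeld Grassmannian (\Cref{prop_gen_unip_bd_sc_red_quot}, \Cref{cor_seminormal_all}) that leverages the already-known $\mathrm{SL}_3$ case in the generic fiber together with the unibranch theorem of [GL22]. Without that deformation argument, your induction cannot close. Finally, a minor imprecision: global $+$-regularity (the splinter condition) is not the same as global $F$-regularity, and the paper deliberately works with the former precisely because defining the Cartier divisors required for a Mehta--Ramanathan-style Frobenius splitting is what breaks down beyond the tame case.
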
 

This result appears already in \cite{Fal03,PR08,FHLR22} for every group except odd unitary ones if $p=2$.
The point of this paper is to give a new uniform proof, so first we have to review the history behind it. Faltings \cite{Fal03} proved the theorem for split groups and his proof had two steps: (i) applying the Mehta--Ramanathan criterion \cite{MR85} on $\varphi$-split varieties, where $\varphi$ denotes the Frobenius map, to show that the transition maps of normalized Schubert varieties are closed immersions; (ii) use integral Lie-theoretic arguments to prove that the embedded Schubert varieties are already normal. Then, Pappas--Rapoport \cite{PR08} applied this strategy to the case of tame groups. For non-tame $G$, we face the following obstacles: (i) demands a divisor on $\mathrm{Gr}_\calG$ with specific degrees, usually defined via negative loop groups, which do not seem to exist beyond the tame case; (ii) requires an integral lift of the group theoretic data to a two dimensional ring such as $W(k)\pot{t}$, which does not seem to exist for odd unitary groups if $p=2$. 

A non-negligible portion of our research was dedicated to overcome some of the above obstacles. First, we show in \cite{HLR18} that most Schubert varieties are not normal if $p \mid \#\pi_1(G_{\mathrm{der}})$, see \cite{BR23} for the full classification at hyperspecial level. As for tameness, we lifted it almost completely in \cite{Lou23,FHLR22} as follows. For part (i), we retrieve the critical divisors for all groups by a reduction to the split case, still relying on a case-by-case analysis. For part (ii), we constructed $W(k)$-lifts for all groups except odd unitary ones when $p=2$, and could finish Faltings' proof of embedded normality. 

Since then, we found an alternative approach to (i). In \cite{CL24} we replace the Mehta--Ramanathan criterion \cite{MR85} for $\varphi$-splitness by the Bhatt criterion \cite{Bha12} for splinters, also known as globally $+$-regular varieties in \cite{BMP+23}. This criterion involves choosing auxiliary $\bbQ$-Cartier boundaries inside Demazure varieties but does not require defining Cartier divisors in $\mathrm{Gr}_\calG$. This gives a uniform proof of part (i) for all $G$. Improving the strategy for part (ii) is the goal of this paper: we want to explain a new method that uniformly proves embedded normality for all groups.

At this point, the reader probably already guessed how distributions fit into the normality picture. The main idea is that the integral Lie algebra methods should be replaced by dealing directly with the associative $k$-algebra $\mathrm{Dist}(\Res_{F/k}G)$ of loop distributions. This algebra carries a natural topological structure, so we use the formalism of condensed mathematics of Clausen--Scholze \cite{CS19}. We define a Serre presentation for $\mathrm{Dist}(\Res_{F/k}G)$ following \cite{Tak83a,Tak83b} to reduce to rank $1$ groups. If $G$ is a restriction of scalars of $\mathrm{SL}_2$, we can make the calculations explicitly, as its Schubert varieties are lci. If $G$ is a restriction of scalars of $\mathrm{SU}_3$, we couldn't perform the calculations effectively, so instead more effort is required via the theory of local models, which we explain next.

Beilinson--Drinfeld \cite{BD91} attach a more general loop group $\Res_{O^2_\circ/O} \calG$ to the parahoric model $\calG$, where the restriction of scalars is along the inclusion of the second factor in the complement of the diagonal of $O$, and the $\calG$-structure is along the first factor. It carries the jet subgroup $\Res_{O^2/O} \calG$ and the étale quotient $\mathrm{Gr}_{\calG,O}$ is called the Beilinson--Drinfeld Grassmannian. Its generic fiber is isomorphic to the usual affine Grassmannian $\mathrm{Gr}_{G,F}$ of the $F$-group $G$, whereas the special fiber equals $\mathrm{Gr}_{\calG,k}$. Let $\mathrm{Gr}_{\calG,O_E,\leq \mu}$ be the scheme-theoretic image of $\mathrm{Gr}_{G,E,\leq \mu}\to \mathrm{Gr}_{\calG,O_E}$, where $\mu$ is a conjugacy class of absolute coweights of $G$ and $E$ is its reflex field.

\begin{theorem}\label{teo2_intro}
	If $p\nmid \#\pi_1(G_{\mathrm{der}})$, then $\mathrm{Gr}_{\calG,O_E,\leq \mu}$ is normal and Cohen--Macaulay with $\varphi$-split special fiber.
\end{theorem}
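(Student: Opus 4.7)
The plan is to deduce Theorem~\ref{teo2_intro} from Theorem~\ref{teo1_intro} by combining the flatness of $\Gr_{\calG,O_E,\leq\mu}$ over the DVR $O_E$ with the geometric properties of its generic and special fibers via the standard deformation principle over a Dedekind base. Flatness is automatic from the definition as the scheme-theoretic image of the integral $E$-scheme $\Gr_{G,E,\leq\mu}$; the generic fiber recovers precisely this Schubert variety, which is normal, Cohen--Macaulay, and globally $+$-regular by Theorem~\ref{teo1_intro} applied to $G_E$ with hyperspecial parahoric.

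The first substantive task would be to identify the special fiber. By the coherence conjecture of Pappas--Rapoport, now available for all groups with $p\nmid \#\pi_1(G_\der)$ thanks to the splinter-based approach of \cite{CL24} together with \cite{FHLR22}, one obtains a scheme-theoretic equality $\Gr_{\calG,k,\leq\mu}=\bigcup_{w\in\Adm(\mu)}\Gr_{\calG,k,\leq w}$ as a reduced union of admissible Schubert varieties inside $\Gr_{\calG,k}$. Each stratum is normal, Cohen--Macaulay, and globally $+$-regular by Theorem~\ref{teo1_intro}. To transfer these properties to the whole union I would invoke the Demazure-resolution strategy of \cite{CL24}: selecting $\bbQ$-Cartier boundary divisors on the Bott--Samelson resolutions that are compatible across the admissible strata produces a single globally $+$-regular structure on $\Gr_{\calG,k,\leq\mu}$. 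Since global $+$-regularity in characteristic $p$ implies $\varphi$-splitness, this delivers normality, Cohen--Macaulayness, and $\varphi$-splitness of the special fiber simultaneously.

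Finally, I would lift to the total space via the Dedekind deformation criterion. Flatness combined with Cohen--Macaulayness of both fibers implies that $\Gr_{\calG,O_E,\leq\mu}$ is Cohen--Macaulay. For normality I would apply Serre's $R_1+S_2$ criterion: $S_2$ follows from flatness plus fiberwise $S_2$, while every codimension-one point of $\Gr_{\calG,O_E,\leq\mu}$ either lies in the normal generic fiber (hence is regular there) or is a generic point of an irreducible component of the reduced special fiber (where the local ring is a field); hence $R_1$ holds.

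The main obstacle is the compatibility step in the second paragraph: upgrading global $+$-regularity of each admissible stratum to a genuine global $+$-regular structure on the whole union requires choosing boundary divisors on all relevant Demazure resolutions whose pullbacks glue correctly along strata intersections. This is precisely the issue handled by the techniques of \cite{CL24}; once in place, the remaining steps are essentially formal consequences of Theorem~\ref{teo1_intro} and standard deformation theory over a DVR.
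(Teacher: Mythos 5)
Your proposal diverges from the paper's route and contains a concrete error.

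First, the error: you claim the special fiber inherits \emph{normality} from \Cref{teo1_intro}. But the special fiber $\Gr_{\calG,k,\leq\mu}$ is a union of admissible Schubert varieties indexed by $\Adm(\mu)$, which for non-minuscule $\mu$ has several maximal elements and hence several irreducible components; a reducible scheme is never normal. The special fiber is $\varphi$-split, weakly normal and Cohen--Macaulay, but not normal --- and indeed the theorem statement itself only asserts $\varphi$-splitness of the special fiber, reserving normality for the total space. Your Serre $R_1 + S_2$ argument for the total space is still salvageable (normality of the total space does not require normality of the special fiber, only generic reducedness of it, which makes the local ring at each generic point of the special fiber a DVR, not a field as you wrote), but the premise as stated is false.

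Second, and more fundamentally, you invoke ``the coherence conjecture of Pappas--Rapoport, now available for all groups'' to assert a scheme-theoretic equality of the special fiber with the reduced admissible union. That identification is precisely the hard content of \Cref{teo2_intro}, not an available input. The coherence conjecture is a dimension-count for sections of line bundles; extracting from it reducedness of the special fiber requires an a priori $\varphi$-splitting argument compatible with the Beilinson--Drinfeld deformation, which is exactly Zhu's strategy for tame groups. The paper explicitly states that this global $\varphi$-splitting ``cannot be generalized to all $G$'' and instead proves reducedness via a completely different mechanism: the unibranch theorem of \cite{GL22} (established by a nearby-cycles computation with the Wakimoto filtration), combined with the topological identification of local models from \cite{HR21,AGLR22}, Zariski's main theorem, flatness, and Serre's criterion $(R_0+S_1 \Rightarrow \mathrm{reduced})$ for the special fiber. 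One then matches the reduced special fiber with $\Gr^{\mathrm{sn}}_{\calG,k,\leq\mu}$ and applies \Cref{teo1_intro} to conclude. Your ``compatibility of boundary divisors across admissible strata'' step, which you flag as the main obstacle, is not addressed in the paper at all --- the paper sidesteps it by not attempting to construct a global $+$-regular structure on the whole admissible union from the outset.
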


Again the proof of \Cref{teo2_intro} can be found in \cite{Zhu14,FHLR22} if $p>2$ or $G$ is $\mathrm{SU}_3$-free. Zhu \cite{Zhu14} handles tamely ramified groups via a global $\varphi$-splitting, which we cannot generalize to all $G$. Instead, we prove it via the unibranch theorem of \cite{GL22} together with \Cref{teo1_intro}. We also use $\Gr_{\calG,O}$ to finish the proof of \Cref{teo1_intro} when $G=\mathrm{SU}_3$. If $\calG$ is a special parahoric with simply connected reductive quotient, then $\mathrm{Dist}(\Res_{O^2_\circ/O}\calG)$ is generated by its unipotent part. Using the case of $\mathrm{SL}_3$ in the generic fiber, this implies semi-normality of $\Gr_{\calG,O}$ and hence also of its special fiber $\mathrm{Gr}_{\calG,k}$ by a computation with minuscule coweights.

\subsection{Acknowledgements}
This project was carried out during stays at the Max-Planck-Institut für Mathematik and the Universität Münster. It received financial support from the the Excellence Cluster of the Universität Münster, and the ERC Consolidator Grant 770936 via Eva Viehmann.
I'd like to thank Johannes Anschütz, Robert Cass, Najmuddin Fakhruddin, Ian Gleason, Tom Haines, and Timo Richarz, for their tremendous help as collaborators in related projects. Thanks to Peter Scholze for various helpful discussions over the years. 

\section{Affine Grassmannians}

\subsection{Ind-schemes}
In this paper, an ind-scheme will always mean a colimit of qcqs schemes along closed immersions. The qcqs hypothesis facilitates handling morphisms of ind-schemes, as they necessarily respect scheme presentations. Qcqs formal schemes obviously embed fully faithfully into the category of ind-schemes. Indeed, it is important for us to regard formal schemes as ind-schemes, and thus their underlying reduced subscheme equals the reduction in the category of ind-schemes. 

Let $k$ be a finite field and consider the category of pointed ind-$k$-schemes $(X,x)$, where $X$ is an ind-scheme over $k$ and $x$ is a $k$-valued point of $X$.
We say that a pointed $k$-scheme $(Z,z)$ is nilpotent if its reduction equals $z$ (and thus $Z$ is affine) and its radical ideal $I_z$ is nilpotent, i.e., a power $I_z^n$ of it vanishes for some $n\gg 0$. It is decisive to focus our attention on nilpotent rather than nil-ideals, i.e., those whose elements are nilpotent, because we will encounter many ideals which are not finitely generated. 

\begin{definition}\label{def_formal_completion}
	The formal completion $\widehat{X}_x$ of a pointed ind-$k$-scheme $(X,x)$ is the filtered colimit of all closed nilpotent pointed $k$-subschemes $(Z,x)\subset (X,x)$. 
\end{definition}

Similarly, we define the ring of formal sections $\Gamma(\widehat{X}_x,\calO)$ of $(X,x)$ to be the limit of the rings $\Gamma(Z,\calO)$. This ring admits the structure of a solid commutative $k$-algebra in the sense of Clausen--Scholze \cite[Proposition 7.5, Theorem 8.1]{CS19}, as $\Gamma(Z,\calO)$ is a colimit of finite $k$-modules and $k$ is a finitely generated $\bbZ$-algebra. 
Moreover, this induces a contravariant functor from pointed ind-$k$-schemes to solid commutative $k$-algebras.

We shall also apply the notion of formally unramified, formally étale, and formally smooth maps $f\colon (X,x)\to (Y,y)$. For us, this means that 
\begin{equation}\mathrm{Hom}((Z,z),(X,x))\to \mathrm{Hom}((W,w),(X,x)) \times_{\mathrm{Hom}((W,w),(Y,y))} \mathrm{Hom}((Z,z),(Y,y)) 
\end{equation}
is either injective, bijective, or surjective, for every closed embedding $(W,w)\subset (Z,z)$ of nilpotent pointed $k$-schemes. 
During the main part of the paper, we will encounter formally étale maps of ind-schemes which are far from being representable, so the following assertion will be key.
\begin{lemma}\label{lem_form_etale_completion}
	A formally étale map $f\colon(X,x)\to (Y,y)$ of pointed ind-$k$-schemes induces an isomorphism on formal completions.
\end{lemma}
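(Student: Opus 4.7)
The plan is to recognize both formal completions as representing the same functor on nilpotent pointed $k$-schemes, and then invoke the formally étale hypothesis to identify those functors.

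First, I claim that for any pointed ind-$k$-scheme $(X,x)$ and any nilpotent pointed $k$-scheme $(T,t)$, every morphism $(T,t)\to(X,x)$ factors uniquely through the formal completion $\widehat{X}_x$. Since $T$ is affine with a single point, the map lands set-theoretically in $\{x\}$ and hence factors through some affine open $\Spec B\subset X_i$ containing $x$ (where $X_i$ is any member of an ind-presentation of $X$ through which $T$ factors). Let $T=\Spec A$ with nilpotent maximal ideal $\frakm$ satisfying $\frakm^n=0$, and let $J=\ker(B\to A)$. The injection $B/J\hookrightarrow A$ shows that $\frakn_B/J$, the image of the maximal ideal of $B$ at $x$, is nilpotent of order at most $n$ and is the unique prime of $B/J$. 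Hence $Z=\Spec(B/J)$ is a closed nilpotent pointed subscheme of $X$, and the map factors as $T\to Z\hookrightarrow \widehat{X}_x$. Consequently $\Hom((T,t),\widehat{X}_x)=\Hom((T,t),(X,x))$ for nilpotent pointed $(T,t)$.

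Next, for each such $(T,t)$ I apply the formally étale hypothesis to the closed embedding $(\Spec k, x)\hookrightarrow(T,t)$ given by the base point. Both $\Hom((\Spec k, x),(X,x))$ and $\Hom((\Spec k, x),(Y,y))$ are singletons, so the fiber product in the formally étale condition collapses and produces a natural bijection
\begin{equation*}
\Hom((T,t),(X,x)) \xrightarrow{\sim} \Hom((T,t),(Y,y)).
\end{equation*}
Combined with the preceding paragraph, this is a natural isomorphism $\Hom(-,\widehat{X}_x)\cong \Hom(-,\widehat{Y}_y)$ of functors on the category of nilpotent pointed $k$-schemes.

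To conclude, observe that both $\widehat{X}_x$ and $\widehat{Y}_y$ are, by definition, filtered colimits along closed immersions of their nilpotent pointed subschemes, so Yoneda on this subcategory suffices: the natural isomorphism above assembles from compatible bijections on each nilpotent closed subscheme and therefore promotes to an isomorphism of ind-schemes $\widehat{X}_x\xrightarrow{\sim}\widehat{Y}_y$, which is manifestly $\widehat{f}$. The only delicate point of the argument is the scheme-theoretic image step in the first paragraph, and specifically the passage from nil to genuinely nilpotent defining ideals; everything else is formal.
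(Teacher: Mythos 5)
Your argument is correct and follows essentially the same route as the paper's: both hinge on applying the formally étale hypothesis to the closed embedding $(\Spec k,x)\hookrightarrow(T,t)$ of the base point into a nilpotent thickening, combined with the observation that the formal completion corepresents $(T,t)\mapsto\Hom\big((T,t),(X,x)\big)$ on nilpotent pointed $k$-schemes. The paper packages this as the explicit construction of a section $s$ of $f$ together with the check that $s\circ f=\id$, whereas you run Yoneda over the category of nilpotent pointed $k$-schemes; the content is the same.
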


\begin{proof}
	Without loss of generality, we may and do assume that $Y$ is a nilpotent scheme and $X$ is its own formal completion at $x$. By formal étaleness, we get a unique section $s\colon Y\to X$ of $f$. This factors through a nilpotent subscheme $X'$ by quasi-compactness and the resulting map $f'\colon X'\to Y$ is necessarily formally étale. In particular, we may now assume that $X=X'$ is a nilpotent $k$-scheme. Finally, we claim that also $s\circ f$ is the identity map of $X$. By formal étaleness, we can check this after post-composing with $f$, and it is then obvious.
	\end{proof}

Similarly, formal étaleness can be detected in terms of formal sections.
\begin{lemma}\label{lem_form_etale_local_rings}
	A map $f\colon(X,x)\to (Y,y)$ of pointed ind-schemes is formally étale if and only if it induces an isomorphism of their formal sections as solid commutative $k$-algebras.
\end{lemma}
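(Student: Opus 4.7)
The plan is to combine Lemma~\ref{lem_form_etale_completion} with a Yoneda-style argument, based on the adjunction between maps from nilpotent pointed $k$-schemes into $(X,x)$ and point-preserving solid $k$-algebra maps out of $\Gamma(\widehat{X}_x,\calO)$.

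For the ``only if'' direction, assume that $f$ is formally étale. Applying Lemma~\ref{lem_form_etale_completion}, the induced map $\widehat{f}\colon\widehat{X}_x\to\widehat{Y}_y$ is an isomorphism of pointed ind-$k$-schemes. Since $\Gamma(\widehat{X}_x,\calO)$ is defined as the limit of $\Gamma(Z,\calO)$ over the \emph{same} cofiltered system of closed nilpotent pointed subschemes as that computing $\widehat{X}_x$, the functor of formal sections converts this isomorphism into an isomorphism of solid commutative $k$-algebras.

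For the ``if'' direction, I would first establish the following adjunction, naturally in $(X,x)$: for every nilpotent pointed $k$-scheme $(Z,z)$,
\begin{equation}
\mathrm{Hom}((Z,z),(X,x))\; \cong\; \mathrm{Hom}_{\text{solid }k\text{-alg},\,+}\bigl(\Gamma(\widehat{X}_x,\calO),\Gamma(Z,\calO)\bigr),
\end{equation}
where the right side denotes augmented (point-preserving) solid $k$-algebra maps. The forward direction is immediate: a map $(Z,z)\to(X,x)$ factors through some closed nilpotent subscheme $(Z_\alpha,x)\subset\widehat{X}_x$ by quasi-compactness of $Z$, and the pullback $\Gamma(Z_\alpha,\calO)\to\Gamma(Z,\calO)$ precomposed with the canonical projection out of the limit yields the desired solid map. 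The reverse bijection rests on the fact that any augmented solid algebra map out of the cofiltered limit $\lim_\alpha\Gamma(Z_\alpha,\calO)$, valued in the filtered colimit of finite $k$-modules $\Gamma(Z,\calO)$, necessarily factors through some finite stage $\Gamma(Z_\alpha,\calO)$, thus giving back a genuine scheme map $Z\to Z_\alpha\hookrightarrow X$.

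Granting the adjunction, the isomorphism $f^*\colon\Gamma(\widehat{Y}_y,\calO)\xrightarrow{\sim}\Gamma(\widehat{X}_x,\calO)$ induces a bijection $\mathrm{Hom}((Z,z),(X,x))\cong\mathrm{Hom}((Z,z),(Y,y))$ for every nilpotent pointed $(Z,z)$, compatibly with restriction along any closed embedding $(W,w)\subset(Z,z)$. The lifting square testing formal étaleness of $f$ is therefore identified with the identity map of $\mathrm{Hom}((Z,z),(Y,y))$ — tautologically a bijection — so $f$ is formally étale.

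The main obstacle is the nontrivial direction of the adjunction: showing that a solid algebra map out of the pro-sectional limit factors through some step of the defining system. This is where the condensed/solid formalism is essential; one must exploit both that $\Gamma(Z,\calO)$ is a filtered colimit of finite $k$-modules (hence compact enough for the limit-to-colimit comparison to hold) and that the transition maps in the limit are surjective modulo nilpotent kernels. Everything else in the proof is formal manipulation of the adjunction and Lemma~\ref{lem_form_etale_completion}.
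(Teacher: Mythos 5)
Your proposal is correct and takes essentially the same route as the paper: the ``only if'' direction via \Cref{lem_form_etale_completion}, and, for the hard ``if'' direction, a reduction to the fact that a solid algebra map out of the cofiltered limit $\Gamma(\widehat{X}_x,\calO)=\lim_\alpha\Gamma(X_\alpha,\calO)$ into a discrete solid $k$-algebra factors through some stage, a fact that you flag but (like the paper) do not prove in full. Your Yoneda/adjunction packaging applies this factorization directly to maps into $\Gamma(Z,\calO)$, whereas the paper reduces to the case where $Y$ is a nilpotent scheme and applies it to the inverse of the given isomorphism, but these amount to the same argument.
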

\begin{proof}
	We may and do assume that $Y$ is a nilpotent scheme and $X$ is a nilpotent ind-scheme. Then, we have that $\Gamma(Y,\calO)=\Gamma(X,\calO)$ where the left side is discrete (i.e., a colimit of finite $k$-modules) and the right side is a limit of discrete solid $k$-modules. In particular, the map factors through some nilpotent subscheme $X'\subset X$. We deduce that $\Gamma(X',\calO)=\Gamma(X,\calO)$ and hence $X'=X$.
\end{proof}

We are also interested in understanding absolute properties for ind-schemes. We will say that an ind-scheme has a certain property $(P)$ if it admits a presentation all of whose constituents satisfy the property $(P)$, compare with \cite[Definition 1.15]{Ric20}. 
If $(P)$ is preserved under closed immersions, then it does not depend on the choice of a presentation: this holds for many of the adjectives that we will employ for ind-schemes such as affine, separated, proper, projective. Then, there are other properties that one can also define via a universal property, such as reduced and semi-normal. It turns out that there always exist a universal reduced sub-ind-scheme $X^{\mathrm{red}}\to X$, see \cite[Lemma 1.17]{Ric20}. 

Let us review the notion of semi-normal schemes. A scheme $X$ is semi-normal if every universal homeomorphism $Y\to X$ with trivial residue field extensions is an isomorphism. For any scheme $X$, there exists a initial morphism $X^{\mathrm{sn}}\to X$ with $X^{\mathrm{sn}}$ semi-normal: we call it the semi-normalization of $X$. The assignment $X \to X^{\mathrm{sn}}$ defines a functor, i.e., morphisms lift to their semi-normalizations. We refer to \cite[Tag 0EUS]{StaProj} for the proof of the previous assertions. Taking colimits, we see that $X\mapsto X^{\mathrm{sn}}$ extends to a functor from ind-schemes to sheaves on $k$-algebras. The resulting sheaf will be an ind-scheme too if the transition maps are closed immersions. Fortunately, this will happen for loop groups of reductive and unipotent groups.

\subsection{Loop groups}
From now on, let $k$ be a finite field, $F=k\rpot{t}$ the local field of Laurent series with coefficients in $k$, and $O=k\pot{t}$ be the power series ring over $k$. In this paper, we are interested in the loop space $\Res_{F/k}X$ of a finite type affine $F$-scheme $X$. This is the functor on $k$-algebras given by $A\mapsto X(A\rpot{t})$, and it is representable by an ind-$k$-scheme, compare with \cite[1.a]{PR08}. If $\calX$ is an affine $O$-model of $X$, we can define a jet space $\Res_{O/k}\calX$ as the functor given by $A\mapsto X(A\pot{t})$. It turns out that the natural map $\Res_{O/k}\calX \to \Res_{F/k}X$ is a closed immersion and it realises the left side as an affine scheme that is almost never of finite type, compare again with \cite[1a]{PR08}. Let us mention some of their basic properties, starting with jet spaces.

\begin{lemma}\label{lem_res_jet}
	The functor $\calX \mapsto \Res_{O/k}\calX$ from finite type affine $O$-schemes to  affine $k$-schemes is a limit of the functors $\Res_{O_n/k}$, where $O_n=O/t^nO$. In particular, $\Res_{O/k}$ preserves immersions and carries smooth schemes to pro-smooth schemes.
\end{lemma}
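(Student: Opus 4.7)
The plan is to unfold definitions. For any $k$-algebra $A$ the $O$-algebra $A\pot{t}$ is the cofiltered limit $\lim_n A[t]/t^n$, and since $\calX$ is affine the functor of $O$-algebra homomorphisms out of $\calO(\calX)$ commutes with this limit on the nose. Thus
\begin{equation*}
(\Res_{O/k}\calX)(A) = \calX(A\pot{t}) = \lim_n \calX_n(A[t]/t^n) = \lim_n (\Res_{O_n/k}\calX_n)(A),
\end{equation*}
where $\calX_n := \calX \times_{\Spec O} \Spec O_n$. This exhibits $\Res_{O/k}\calX$ as the cofiltered limit of the $\Res_{O_n/k}\calX_n$ along the truncation maps, which is the first assertion.

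For the ``in particular'' clause, I would appeal to the classical theory of Weil restriction along the finite free ring map $k \to O_n$. On finite type affine $O_n$-schemes, $\Res_{O_n/k}$ is representable by an affine $k$-scheme, preserves immersions, and carries smooth $O_n$-schemes to smooth $k$-schemes. Cofiltered limits of closed immersions along affine transition maps remain closed immersions, so this gives preservation of immersions by $\Res_{O/k}$. For pro-smoothness it remains to verify that whenever $\calX$ is smooth over $O$ the transition maps $\Res_{O_{n+1}/k}\calX_{n+1} \to \Res_{O_n/k}\calX_n$ are themselves smooth and surjective; this follows from the infinitesimal lifting criterion applied to the square-zero surjection $A[t]/t^{n+1} \twoheadrightarrow A[t]/t^n$, since smoothness of $\calX$ produces both the existence of lifts and a torsor structure on the fibres of the lifting map.

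The only point requiring real care is the finite-level assertion, since passage to the limit is formal once those facts are in hand. I do not anticipate a genuine obstacle: representability, preservation of immersions, and preservation of smoothness for Weil restriction along a finite locally free morphism of affine bases are all standard, as is the fact that a cofiltered limit of smooth schemes along smooth affine transition maps satisfies the definition of pro-smooth adopted in this paper.
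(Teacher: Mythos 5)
Your proof of the limit description and of pro-smoothness is fine and tracks the paper's intent (the paper invokes \cite[Proposition A.5.11]{CGP15} where you use infinitesimal lifting; both are fine and your version is a bit more explicit about the smoothness of the transition maps). However, there is a real gap in the passage to the limit for \emph{open} immersions, which the paper handles explicitly and you do not. You correctly observe that at each finite level $\Res_{O_n/k}$ preserves both open and closed immersions, and you correctly observe that a cofiltered limit of closed immersions along affine transition maps is again a closed immersion. But you then conclude ``so this gives preservation of immersions by $\Res_{O/k}$,'' which does not follow: a cofiltered limit of open immersions along affine transition maps is in general \emph{not} an open immersion, so the open case is unaddressed.

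The paper closes this gap with a specific feature of the jet functor: since $k$ is the reduction of $O_n$ (equivalently, $t$ is nilpotent in $O_n$), for any open immersion $\calU \subset \calX$ one has that $\Res_{O_n/k}\calU$ is exactly the preimage of the open subscheme $\calU_k \subset \calX_k$ under the canonical projection $\Res_{O_n/k}\calX \to \calX_k$. Thus all the open subschemes $\Res_{O_n/k}\calU$ are pulled back from the \emph{same} open subscheme of $\calX_k$, and so their limit $\Res_{O/k}\calU$ is the preimage of $\calU_k$ under $\Res_{O/k}\calX \to \calX_k$, hence open. You need this (or an equivalent observation) to handle locally closed immersions; without it the ``preserves immersions'' clause only covers closed ones.
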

\begin{proof}
	This is standard, compare with \cite[1a]{PR08}, \cite[Proposition 1.3.2]{Zhu16} and \cite[Lemma 3.17]{Ric20}. The first part follows by definition itself of the jet group and affineness of $\calX$ for commuting with limits. It is clear that it preserves closed immersions by general properties of limits along affine morphisms, and also open immersions because $k$ is the reduction of $O_n$, which implies that $\Res_{O_n/k}\calU\subset \Res_{O_n/k}\calX$ is the pullback of $\calU_k \subset \calX_k$. The final assertion on smooth schemes is obvious by \cite[Proposition A.5.11]{CGP15}.
\end{proof}

For the loop group functor $\Res_{F/k}$, it is no longer true that it respects open immersions. Indeed, we will see that $\Res_{F/k}\bbG_{\mathrm{m},F}$ is not reduced in \Cref{prop_reduced_loop_group}, whereas $\Res_{F/k}\bbG_{\mathrm{a},F}$ clearly is. However, this still has a remedy at the formal level.

\begin{lemma}\label{lem_res_loop}
	The functor $X\mapsto \Res_{F/k}X$ from finite type affine $F$-schemes to affine ind-$k$-schemes preserves closed immersions and formally étale maps.
\end{lemma}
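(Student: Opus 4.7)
Both parts should follow from an explicit presentation of the loop space as a filtered colimit of jet spaces, combined with the observation that nilpotent ideals remain nilpotent after adjoining Laurent series.

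For closed immersions, let $X \hookrightarrow Y$ be a closed immersion of finite type affine $F$-schemes. The plan is to choose an auxiliary closed embedding $Y \hookrightarrow \bbA^n_F$, so that $X$ also embeds closed in $\bbA^n_F$ with defining ideals $I_Y \subset I_X$ in $F[x_1,\ldots,x_n]$. One then invokes the standard presentation
$$\Res_{F/k}\bbA^n_F = \on{colim}_N \Res_{O/k}(t^{-N} \bbA^n_O),$$
whose transitions are closed immersions coming from $t^{-N}\bbA^n_O \subset t^{-N-1}\bbA^n_O$. Expanding the equations of $I_Y$ and $I_X$ coefficient-wise in powers of $t$ at each level $N$ cuts out compatible closed subschemes, with the one for $I_X$ contained in the one for $I_Y$. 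Passing to the colimit yields the desired closed immersion of ind-schemes.

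For formally étale maps, let $f\colon X \to Y$ be formally étale between finite type affine $F$-schemes; since $f$ is of finite type, it is moreover étale. To verify the lifting property for $\Res_{F/k}f$, I would take a closed embedding $(W, w) \subset (Z, z)$ of nilpotent pointed $k$-schemes, set $A = \Gamma(Z, \calO)$, and let $I \subset A$ denote the nilpotent ideal with $A/I = k$. Under the Yoneda identification $\Hom(Z, \Res_{F/k}X) = X(A\rpot{t})$, what has to be established is the bijectivity of
$$X(A\rpot{t}) \longrightarrow X(F) \times_{Y(F)} Y(A\rpot{t}).$$

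The decisive technical step is that $I\rpot{t} \subset A\rpot{t}$ is again nilpotent: if $I^n = 0$ in $A$, then every coefficient of a product of $n$ Laurent series in $I\rpot{t}$ is a finite sum of $n$-fold products of elements of $I$ and hence vanishes, so $(I\rpot{t})^n = 0$. Reducing coefficients modulo $I$ moreover identifies $A\rpot{t}/I\rpot{t}$ with $(A/I)\rpot{t} = F$, so the surjection $A\rpot{t} \twoheadrightarrow F$ is a nilpotent thickening of $F$-algebras. The lifting property of the étale morphism $f$ over $F$ then immediately supplies the required bijection. The only mildly delicate point is the preservation of nilpotence under $\rpot{t}$, which goes through because the nilpotence order of $I$ bounds that of $I\rpot{t}$ by the same integer $n$, regardless of whether $I$ is finitely generated.
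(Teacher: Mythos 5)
Your proof is correct and follows essentially the same route as the paper's. For closed immersions, you flesh out what the paper dismisses as an easy "reduction to $X=\bbA^n_F$"; for formal étaleness, you give exactly the paper's argument — that $\Res_{F/k}f$ lifting against $z\subset Z$ amounts to $f$ lifting against $z\rpot{t}\subset Z\rpot{t}$, with the decisive technical observation that if $I^n=0$ then $(I\rpot{t})^n=0$, so that $A\rpot{t}\twoheadrightarrow F$ really is a nilpotent (not merely nil) thickening (this is precisely why the paper insists on nilpotent rather than nil-ideals earlier on). One small expository point: you state you are handling a general closed embedding $(W,w)\subset (Z,z)$ but then silently specialize to $W=\Spec(k)$. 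This is in fact sufficient — the general lifting square follows from the $W=\Spec(k)$ case applied to both $Z$ and $W$, using uniqueness — and the paper makes the same silent specialization, but it would be cleaner either to run the argument with the kernel of $\Gamma(Z,\calO)\to\Gamma(W,\calO)$ directly (it is still nilpotent, and the same computation applies) or to note explicitly that the $W=z$ case suffices.
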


\begin{proof}
	The assertion for closed immersions follows easily by reduction to $X=\bbA^n_F$. 
	As for formal étaleness, let $(Z,z)$ be any nilpotent scheme. Showing that a map $\Res_{F/k}f\colon \Res_{F/k}X\to \Res_{F/k}Y$ lifts uniquely against the inclusion $z \subset Z$ amounts to showing that $f\colon X \to Y$ lifts uniquely against $z\rpot{t}\subset Z\rpot{t}$, which holds by definition. Here, $(Z\rpot{t},z\rpot{t})$ indicates the nilpotent pointed $F$-scheme such that $\Gamma(Z\rpot{t},\calO_{Z\rpot{t}})=\Gamma(Z,\calO_Z)\rpot{t}$.
\end{proof}


Let $G$ be a connected reductive $F$-group and $\calG$ be a parahoric $O$-model of $G$ in the sense of \cite[5.1.9, Définition 5.2.6]{BT84}. Note that the loop and jet spaces $\Res_{F/k}G$ and $\Res_{O/k}\calG$ are now group objects in the category of ind-schemes. Now, we turn to the flag variety arising as the quotient \begin{equation}\mathrm{Gr}_\calG=(\Res_{F/k}G)/(\Res_{O/k}\calG)\end{equation} for the étale topology. This is representable by an ind-scheme by \cite[Theorem 1.4]{PR08}. 
The Bruhat decomposition for parahoric models of reductive groups over non-archimedean fields assumes the form
\begin{equation}
	G(\breve{F})= \bigsqcup_{w \in W_\calG\backslash W /W_\calG} \calG(\breve{O})\dot{w} \calG(\breve{O}).
\end{equation}
Here, we let $T \subset G$ denote a maximally $\breve{F}$-split maximal $F$-torus, see \cite[Corollaire 5.1.12]{BT84}, $\calT$ be the connected Néron $O$-model of $T$, $N$ the normalizer of $T$ inside $G$, $W:=N(\breve{F})/\calT(\breve{O})$ the Iwahori--Weyl group, and $W_\calG \subset W$ the subgroup of elements whose lifts $\dot{w} \in N(\breve{F})$ are contained in $\calG(\breve{O})$, see \cite[Theorem 7.5.3, Proposition 7.5.5]{KP23}.
For every $w \in W$, we define $\mathrm{Gr}_{\calG,l,w}$ as the étale descent to the reflex field $l$ of $w$ of the smooth locally closed orbit of $w$ inside $\Gr_{\calG,\bar k}$ under the jet $\bar{k}$-group of $\calG$. Its scheme-theoretic closure $\mathrm{Gr}_{\calG,l,\leq w}$ inside $\mathrm{Gr}_{\calG,l}$ is a reduced integral projective scheme, compare with \cite[Definition 8.3]{PR08}. This is called the Schubert variety associated with $w$ and our goal in this paper is to study its geometry.

We denote by $\mathrm{Gr}_{\calG,l,\leq w}^{\mathrm{sn}}$ the semi-normalizations of the Schubert varieties embedded in $\mathrm{Gr}_{\calG,l}$. Note that, while we have canonical transition morphisms $\mathrm{Gr}^{\mathrm{sn}}_{\calG,l,\leq v} \to \mathrm{Gr}^{\mathrm{sn}}_{\calG,l,\leq w}$ over a common reflex field, it is not clear whether they are closed immersions. In particular, it is not a priori clear that the sheaf $\Gr_{\calG}^{\mathrm{sn}}$ is an ind-scheme in our strict sense. Fortunately, these problems dissipate thanks to the following key theorem on semi-normalized Schubert varieties.

\begin{theorem}[\cite{FHLR22,CL24}]\label{thm_splinter}
	The $l$-varieties $\mathrm{Gr}_{\calG,l,\leq w}^{\mathrm{sn}}$ are normal, Cohen--Macaulay, rational, compatibly $\varphi$-split and globally $+$-regular. In particular, the semi-normalization $\mathrm{Gr}_{\calG}^{\mathrm{sn}}$ is an ind-$k$-scheme.
\end{theorem}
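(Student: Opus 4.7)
The plan is to follow the splinter approach of \cite{CL24}: for each Schubert variety, construct a Demazure resolution, equip it with an auxiliary $\bbQ$-Cartier boundary, and apply Bhatt's criterion for global $+$-regularity \cite{Bha12,BMP+23} to transfer positivity and singularity bounds from the smooth resolution down to the possibly singular image.

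First, for a reduced expression of $w$ in the absolute Iwahori--Weyl group, I would form the Bott--Samelson--Demazure--Hansen variety $\tilde D_w$ over the reflex field $l$ as an iterated $\bbP^1$-bundle built from the corresponding sequence of simple affine reflections. This is a smooth projective $l$-variety equipped with a proper birational map $\pi_w\colon \tilde D_w\to \mathrm{Gr}_{\calG,l,\leq w}$ whose scheme-theoretic image is the Schubert variety.

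Second, and this is the heart of the argument, I would exhibit a $\bbQ$-Cartier boundary $\Delta$ on $\tilde D_w$ -- built from the boundary divisors corresponding to proper sub-expressions with carefully chosen rational coefficients -- such that $-(K_{\tilde D_w}+\Delta)$ is $\pi_w$-ample and the pair $(\tilde D_w,\Delta)$ is globally $+$-regular. The hypothesis $p\nmid \#\pi_1(G_{\mathrm{der}})$ enters exactly here, controlling the behaviour of the relevant line bundles under Frobenius; without it, \cite{HLR18,BR23} give explicit counterexamples to normality. Bhatt's criterion, in the form of \cite{BMP+23}, then propagates global $+$-regularity forward along $\pi_w$ to the scheme-theoretic image, up to the semi-normalization, because semi-normalization is the universal operation through which the image of a globally $+$-regular pair factors in a way that preserves the splinter structure. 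Since globally $+$-regular varieties are normal, Cohen--Macaulay, pseudo-rational, and compatibly $\varphi$-split, all the claimed properties of $\mathrm{Gr}_{\calG,l,\leq w}^{\mathrm{sn}}$ follow at once.

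Finally, to upgrade to the ind-scheme assertion, I would choose the boundaries in a compatible fashion across $v\leq w$, so that the resulting $\varphi$-splittings on $\mathrm{Gr}_{\calG,l,\leq w}^{\mathrm{sn}}$ are compatible with the inclusion of $\mathrm{Gr}_{\calG,l,\leq v}^{\mathrm{sn}}$; since compatible Frobenius splittings preserve reducedness and normality of subschemes, each transition map is a closed immersion between normal projective $l$-varieties, and $\mathrm{Gr}_\calG^{\mathrm{sn}}$ assembles into an ind-scheme in the strict sense of Section~2.1. The main obstacle throughout is producing $\bbQ$-Cartier boundaries on $\tilde D_w$ that certify global $+$-regularity uniformly in $G$, including the odd unitary case at $p=2$ which resisted the $\varphi$-split divisor constructions of Faltings and Pappas--Rapoport. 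The essential advantage of Bhatt's criterion is that these boundaries live entirely on the smooth Demazure variety, bypassing the need to manufacture Cartier divisors on $\mathrm{Gr}_\calG$ out of negative loop groups.
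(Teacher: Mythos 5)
There is a genuine conceptual error: you have inserted the hypothesis $p\nmid \#\pi_1(G_{\mathrm{der}})$ into a theorem that does not have one. Theorem~\ref{thm_splinter} concerns the \emph{semi-normalized} Schubert variety $\mathrm{Gr}_{\calG,l,\leq w}^{\mathrm{sn}}$, and it holds unconditionally for every connected reductive $G$. The hypothesis $p\nmid \#\pi_1(G_{\mathrm{der}})$ only enters in \Cref{teo1_intro}/\Cref{teo_seminormal_schubert}, which assert that the semi-normalization map is an isomorphism, i.e.~that the embedded Schubert variety $\mathrm{Gr}_{\calG,l,\leq w}$ itself (not merely its semi-normalization) is normal. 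The counterexamples you cite from \cite{HLR18,BR23} are counterexamples to the \emph{latter}: when $p \mid \#\pi_1(G_{\mathrm{der}})$ the Schubert variety can fail to coincide with its semi-normalization, but the semi-normalization is still normal, Cohen--Macaulay, rational, $\varphi$-split, and globally $+$-regular. If you really needed $p\nmid \#\pi_1(G_{\mathrm{der}})$ to make the $\bbQ$-Cartier boundary $\Delta$ on the BSDH variety work, the paper's entire two-step strategy --- first prove strong positivity properties of $\mathrm{Gr}^{\mathrm{sn}}_{\calG,l,\leq w}$ uniformly for all $G$, then separately show that the semi-normalization map is an isomorphism under the fundamental-group hypothesis --- would collapse.

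Two smaller remarks. First, you omit the reduction from a general parahoric $\calG$ to the Iwahori level $\calI$: since $\Gr_\calI\to\Gr_\calG$ is proper and smooth and the BSDH resolution naturally lives over the Iwahori flag variety, this step is needed before you can even write down $\tilde D_w$. Second, the passage where you say Bhatt's criterion ``propagates global $+$-regularity forward along $\pi_w$ to the scheme-theoretic image, up to the semi-normalization, because semi-normalization is the universal operation through which the image of a globally $+$-regular pair factors'' is doing a lot of unexplained work; in \cite{CL24} the descent is organized as an inversion-of-adjunction induction on the length of the word $s_\bullet$, splitting the absolute integral closure one step at a time, rather than as a single push-forward of a $+$-regular pair. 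Aside from these points, your use of the BSDH resolution, auxiliary boundaries on the smooth model, and the Bhatt/\cite{BMP+23} splinter criterion does track the route of \cite{CL24}.
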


As explained in the introduction, this result goes back to \cite[Theorem 8]{Fal03} for split $G$, \cite[Theorem 8.4]{PR08} for tame $G$, and \cite[Theorem 4.1]{FHLR22} for all $G$. A new uniform proof was given in \cite{CL24}.
Before concluding this section, we want to explain the several notions appearing in the statement and the proof strategies.

A noetherian scheme $X$ is Cohen--Macaulay if the depth of its local rings (i.e., the maximal length of its regular sequences) equals their Krull dimensions, see \cite[Tag 00N7]{StaProj}. Equivalently, one can define Cohen--Macaulayness by demanding that the dualizing complex $\omega_X^\bullet$ is concentrated in a single degree or that the lower (i.e., below the Krull dimension) local cohomology groups $H^i_x(\calO_{X,x})$ all vanish, compare with \cite[Definition 2.1, Example 2.5]{Bha20}. We say following Kovács \cite[Definition 1.3]{Kov17} that a normal Cohen--Macaulay $k$-variety $X$ is rational if for any proper birational map $f\colon Y\to X$ with $Y$ also normal and Cohen--Macaulay, the higher direct image sheaves $R^if_*\calO_Y$ vanish for all $i>0$. It is enough to verify this condition for a resolution of $X$ by \cite[Theorem 9.12]{Kov17} and it implies vanishing of $R^if_*\calO_Y$ for all $i>0$ by \cite[Theorem 8.6]{Kov17}. Finally, we say that $X$ is $\varphi$-split if the Frobenius $\varphi\colon \calO_X \to \varphi_*\calO_X$ splits as a map of $\calO_X$-modules, and compatibility carries the obvious meaning, compare with \cite[Definitions 5.0.1 and 5.1.4]{BS13}. More generally, $X$ is a splinter in the sense of \cite[Definition 0.1]{Bha12} or globally $+$-regular in the sense of \cite[Definition 6.1]{BMP+23} if $\calO_X \to \calO_Y$ splits as $\calO_X$-modules for any finite cover $Y \to X$. It is proved in \cite[Corollary 5.3]{Bha12} that splinters are automatically normal and Cohen--Macaulay. There is also a notion of strong $\varphi$-regularity generalizing $\varphi$-splitness and which is only slightly stronger than global $+$-regularity.

Let us now sketch the proof of \Cref{thm_splinter}. Let $\calI$ be a Iwahori $O$-model of $G$ obtained from $\calG$ by dilatation. The transition map $\Gr_{\calI} \to \Gr_{\calG}$ is proper smooth, so it suffices to handle the Iwahori case. Up to translation into the neutral component and after enlarging the finite field $k$, any Schubert variety $\Gr_{\calI,\leq w}$ is resolved by the convolution Schubert variety $\Gr_{\calI,\leq s_\bullet}$ where $s_\bullet$ is a sequence of positive simple reflections in the absolute Iwahori--Weyl group. This variety is an iterated $\bbP^1_k$-bundle, so it is smooth in particular, and it is usually called the BSDH variety after work of Bott--Samelson \cite{BS58}, Demazure \cite{Dem74}, and Hansen \cite{Han73}. Prior to \cite{CL24}, the existence of a $\varphi$-splitting of $\Gr_{\calI,\leq s_\bullet}$ was always deduced via the Mehta--Ramanathan criterion \cite{MR85}. This requires producing a Cartier divisor on $\Gr_{\calI}$ with degree $1$ on each $\Gr_{\calI,\leq s_i}$, which is difficult to define homogeneously, see \cite[Section 4]{FHLR22} for the full proof using some case division. This upgrades to a splitting of the absolute integral closure by the proof of \cite[Theorem 1.4]{Cas22}, whose method goes back to \cite{LRPT06} for classical flag varieties. In \cite{CL24}, we approach the problem instead via inversion of adjunction following \cite[Proposition 7.2]{Bha12} refined as in \cite[Theorem 7.2]{BMP+23}, i.e.~we split absolute integral closure by induction on the length of $s_\bullet$. We have to perform a few calculations with auxilliary boundaries, but the proof remains uniform for all $G$ throughout.

\subsection{A rank $1$ example}
In this subsection, we calculate Schubert varieties for rank $1$ split groups, and in particular verify their normality as in \Cref{teo_seminormal_schubert}. We will do this by exploiting certain presentations of Schubert varieties at special level, which appear in \cite[Subsection 1.2.2]{Zhu17} and also \cite[Lemmas 19.3.5 to 19.3.7]{SW20}.
Let $G=\mathrm{GL}_{2,F}$, $\calG=\mathrm{GL}_{2,O}$, and $\mu=(1,0)$ be its only dominant minuscule coweight. Up to translation, the Schubert varieties inside $\mathrm{Gr}_{\mathrm{GL}_{2,O}}$ are of the form $\mathrm{Gr}_{\mathrm{GL}_{2,O},\leq n\mu}$.

\begin{proposition}\label{prop_lci}
	The variety $\mathrm{Gr}_{\mathrm{GL}_{2,O},\leq n\mu}$ is a normal lci.
\end{proposition}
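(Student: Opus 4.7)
Following the lattice-theoretic presentations of Schubert varieties at special level in \cite[Subsection 1.2.2]{Zhu17} and \cite[Lemmas 19.3.5 to 19.3.7]{SW20}, I would identify $\mathrm{Gr}_{\mathrm{GL}_{2,O},\leq n\mu}$ with the moduli space of $O$-submodules $L\subset O^2$ of $k$-codimension $n$. Since any such $L$ automatically contains $t^n O^2$, this is equivalently the closed subscheme of the classical Grassmannian $\mathrm{Grass}(n,O^2/t^nO^2)$ parameterizing $n$-dimensional $k$-subspaces stable under the nilpotent endomorphism induced by multiplication by $t$, which has two equal Jordan blocks of size $n$.

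The plan is then to cover the variety by the $n+1$ standard affine opens around the torus-fixed lattices $t^{(a,b)}O^2$ with $a+b=n$ and $a,b\geq 0$. In each such chart one parameterizes $n$-planes as graphs of a linear map $A$ between two explicit complementary $n$-dimensional subspaces $V_0,V_0'$, so $A$ varies in the matrix affine space $\mathbb{A}^{n^2}$, and $t$-stability of the graph becomes a matrix equation of the form $A N_{V_0} - N_{V_0'} A + A B A = 0$, a system of $n^2$ equations of degree at most two. Here the quadratic term $ABA$ arises from the failure of $V_0'$ to be $t$-stable and vanishes exactly for the extremal values $a\in\{0,n\}$: in those two extremal charts the system is linear and cuts out $\mathbb{A}^n\subset\mathbb{A}^{n^2}$, so the chart is in fact smooth; for the middle charts the key step is to extract a sub-system of $n^2-n$ equations that form a regular sequence in the coordinate ring, yielding lci.

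Normality then follows from Serre's $R_1+S_2$ criterion. The $S_2$ condition is immediate from lci. For $R_1$, by $\mathrm{GL}_2(O)$-invariance the singular locus is a union of closed Schubert subvarieties $\mathrm{Gr}_{\leq(a,n-a)}$ with $a<n$, and the largest of these, $\mathrm{Gr}_{\leq(n-1,1)}$, has dimension $n-2$, so the singular locus has codimension at least $2$ as required. For $n=2$ everything can be made fully explicit, identifying the Schubert variety with the weighted projective plane $\mathbb{P}(1,1,2)$, i.e.\ the quadric cone in $\mathbb{P}^3$, a classical normal lci surface with a single $A_1$ singularity. The main obstacle is the regular-sequence verification for the middle charts at general $n$, where the defining equations are genuinely quadratic; fortunately the rank $1$ geometry keeps the relevant commutative algebra manageable.
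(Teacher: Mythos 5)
Your approach is genuinely different from the paper's. The paper follows Zhu's \cite[Lemma B.4]{Zhu17}: it considers the closed subscheme $X_n\subset \Res_{O/k}\mathrm{Mat}_{2,O}$ of matrices with determinant congruent to $t^n$, quotients by a congruence subgroup to get a finite type scheme $Y_n$, and then argues that $Y_n$ is generically reduced (via a tangent space computation at a diagonal matrix) and cut out by $n+1$ equations in $\bbA_k^{4n+5}$, hence lci; generic reducedness together with lci (hence $S_1$) then gives reducedness, and the torsor description yields the theorem. You instead propose to pass to the finite-dimensional Grassmannian of $t$-stable $n$-planes inside $O^2/t^nO^2$ and do a chart-by-chart analysis.

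There is, however, a concrete gap in the chart analysis. The system $AN_{V_0}-N_{V_0'}A+ABA=0$ does \emph{not} cut out the Schubert variety scheme-theoretically in the middle charts, and no sub-collection of $n^2-n$ of these equations can do so either. Already for $n=2$ in the chart around $(1,1)$ the stability condition reduces to $A^2=0$, whose ideal $(a^2+bc,\, ab+bd,\, ac+cd,\, bc+d^2)$ is purely degree $2$, whereas the ideal of the Schubert variety (the quadric cone you correctly identify) is $(a+d,\, a^2+bc)$ and contains the linear element $a+d$. The latter cannot lie in any ideal generated by a subset of the homogeneous degree-$2$ stability equations, so the scheme you would obtain is at best a nonreduced thickening (and if you drop to $n^2-n$ equations you instead pick up spurious components, e.g.\ $(a^2+bc,\,b(a+d))$ acquires the nonreduced component $V(b,a^2)$). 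In other words, the ``regular sequence from the stability equations'' plan cannot produce the Schubert variety; what it produces is the naive scheme of $t$-stable planes, which is not reduced. Your opening identification of the Schubert scheme with ``the moduli of lattices'' is in fact the scheme-theoretic statement that needs proving here, and it is exactly where the paper's generically-reduced-plus-lci argument does its work. To repair your route, you would need to first show that the naive stability scheme (or some honest subscheme cut out by $n^2-n$ of the equations) is generically reduced and then invoke lci to conclude reducedness and equality with the Schubert variety --- but as the $n=2$ computation shows, the subschemes cut out by sub-systems need not be generically reduced, so even this repair requires a more careful choice of equations (and in particular, some equation outside the given list, such as $a+d$, which is not available).
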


\begin{proof}
	We follow \cite[Lemma B.4]{Zhu17}. Note that every $\Res_{O/k}\mathrm{GL}_{2,O}$-orbit has codimension at least $2$ in a larger one, so we at least know that every Schubert variety is regular in codimension $1$. We just have to verify that it is lci. 
	
	Let $\mathrm{Mat}_{2,O}$ be the $O$-scheme of $2$-by-$2$ matrices and consider the closed $k$-subscheme $X_n\subset \Res_{O/k}\mathrm{Mat}_{2,O}\cap \Res_{F/k}\mathrm{GL}_{2,F}$ given by the preimage of $t^n$ along the determinant map $\mathrm{det}\colon \Res_{F/k}\mathrm{GL}_{2,F} \to \mathrm{Gr}_{\bbG_{\mathrm{m},O}}$. We claim that the map $X_n \to \mathrm{Gr}_{\mathrm{GL}_{2,O}}$ equals the natural $\Res_{O/k}\mathrm{GL}_{2,O}$-torsor over $\mathrm{Gr}_{\mathrm{GL}_{2,O},\leq n\mu}$.
	
	At the level of geometric points, this equality of closed subfunctors is an immediate consequence of the elementary divisor theorem. Therefore, it suffices to verify that $X_n$ is reduced. For this, we note that $X_n$ is a torsor over a finite type $k$-scheme $Y_n$ under the congruence subgroup of $\Res_{O/k}\mathrm{Mat}_{2,O}$ arising as the kernel of the map towards $\Res_{O_{n+1}/k}\mathrm{Mat}_{2,O_{n+1}}$, where $O_{n+1}=O/t^{n+1}O$. We start by proving that $Y_n$ is generically reduced. For this, consider its tangent space at the matrix $(t^n,0,0,1)$. It equals the $k$-submodule of $\mathrm{Mat}_2(O_{n+1})$ whose $(1,1)$-entry is divisible by $t^n$, and thus has dimension equal to $4(n+1)-n=3n+4$. If we consider the stabilizer of the matrix $(t^n,0,0,1)$ under left and right multiplication by $\Res_{O_{n+1}/k}\mathrm{GL}_{2,O_{n+1}}$, we get the matrix equality $(t^na, b, t^nc, d)=(t^ne,t^nf,g,h)$, and hence the topologically dense smooth orbit has dimension equal to $8n+8-4n-4-n=3n+4$, and this implies generic reducedness.
	Finally, if we write down the determinant as a power series modulo $t^{n+1}$, we conclude that $Y_n$ sits inside $\bbA_k^{4n+5}$ and is defined by $n+1$ equations.  This implies that $Y_n$ is indeed a normal complete intersection.
\end{proof}

\subsection{Normality}
In this subsection, we discuss reducedness and semi-normality of loop groups $\Res_{F/k}G$ attached to connected reductive groups. For simplicity, we use the shorthand notation $\Res_{F/k}^{\mathrm{red}}G:=(\Res_{F/k}G)^{\mathrm{red}}$, resp.~$\Res_{F/k}^{\mathrm{sn}}G:=(\Res_{F/k}G)^{\mathrm{sn}}$ to denoted the reduction and semi-normalization of our ind-schemes. Let us start by the problem of reducedness, since it is the most simple.

\begin{proposition}\label{prop_reduced_loop_group}
	The ind-scheme $\Res_{F/k}G$ is reduced if and only if $G$ is semi-simple and $p \nmid \#\pi_1(G)$.
\end{proposition}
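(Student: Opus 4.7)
The proof splits into the two implications. Sufficiency proceeds by reducing to the simply connected case and a torsor argument; necessity requires a case analysis based on which hypothesis fails.

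For sufficiency, assume $G$ is semi-simple with $p \nmid \#\pi_1(G)$. Then the isogeny $G^{\mathrm{sc}} \to G$ has étale kernel, and the induced map $\Res_{F/k}G^{\mathrm{sc}} \to \Res_{F/k}G$ has étale kernel and is étale-locally surjective; since reducedness is étale-local it suffices to treat $G = G^{\mathrm{sc}}$. For $G$ simply connected, $\Res_{F/k}G$ is an étale torsor over $\mathrm{Gr}_G$ under $\Res_{O/k}G$, whose structure group is pro-smooth by \Cref{lem_res_jet} and whose base is reduced by \Cref{thm_splinter} applied at Iwahori level (together with descent along the proper smooth projection $\mathrm{Gr}_{\calI} \to \mathrm{Gr}_G$). Étale-local triviality of this torsor produces étale-local isomorphisms with products of reduced schemes.

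For necessity I argue by contrapositive. Suppose first that $G$ is not semi-simple, so the projection onto the cocenter $S := G/G^{\mathrm{der}}$ is a smooth surjection onto a non-trivial torus. The induced map $\Res_{F/k}G \to \Res_{F/k}S$ is formally smooth and étale-locally surjective, so reducedness descends, and after a finite unramified base change of $k$ that splits $S$ we reduce to the case $S = \bbG_{\mathrm{m},F}$. For this I would exhibit an explicit non-reduced point: for $R := k[\epsilon]/(\epsilon^p)$, the element $f := 1 - \epsilon t^{-1}$ is invertible in $R\rpot{t}$ with inverse $\sum_{i=0}^{p-1}\epsilon^i t^{-i}$, giving an $R$-point of the finite-type piece $X_1 := \{(f,g) \in (t^{-1}R\pot{t})^2 \mid fg = 1\} \subset \Res_{F/k}\bbG_{\mathrm{m},F}$ that does not factor through the reduced sub-ind-scheme $\bigsqcup_{m \in \bbZ} t^m \Res_{O/k}\bbG_{\mathrm{m},F}$, since $\epsilon$ being a non-zero nilpotent prevents $f$ from being written as $t^m \cdot u$ for any $u \in R\pot{t}^\times$.

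The remaining case, $G$ semi-simple with $p \mid \#\pi_1(G)$, is the principal obstacle. Here $\mu_p$ sits centrally inside $G^{\mathrm{sc}}$ but maps trivially into $G$, so the kernel of $G^{\mathrm{sc}} \to G$ is not étale and the reduction to the simply connected case breaks down. The strategy is to exploit the non-surjectivity of $\Res_{F/k}G^{\mathrm{sc}} \to \Res_{F/k}G$ as a map of fppf sheaves, whose obstruction lies in $H^1_{\mathrm{fppf}}(R\rpot{t},\mu_p) \cong R\rpot{t}^\times / (R\rpot{t}^\times)^p$, which is non-trivial for $R := k[\epsilon]/\epsilon^p$. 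Via a cocharacter $\omega \colon \bbG_m \to G$ whose class in $\pi_1(G)$ has order $p$ (so that $\omega$ does not lift to $G^{\mathrm{sc}}$), a non-$p$-th-power unit $u \in (R\rpot{t})^\times$ produces an $R$-point $\omega(u)$ of $\Res_{F/k}G$ lying outside the image of $\Res_{F/k}G^{\mathrm{sc}}$. The delicate step, which I anticipate as the principal technical difficulty, is to verify that $\omega(u)$ does not factor through the maximal reduced sub-ind-scheme of $\Res_{F/k}G$, which I would approach by identifying this reduction with the sheaf-theoretic closure of the image of $\Res_{F/k}G^{\mathrm{sc}}$.
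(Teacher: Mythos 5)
Your argument for the \emph{only if} direction when $G$ is not semisimple is correct and in fact more explicit than the paper's: where the paper argues via the failure of $\mathrm{Lie}(G)$ to surject onto $\mathrm{Lie}(\calA)$, you directly exhibit the nilpotent unit $1-\epsilon t^{-1}$ over $k[\epsilon]/\epsilon^p$, which is a perfectly good witness once one accepts that $\Res_{F/k}^{\mathrm{red}}\bbG_m = \bigsqcup_m t^m\Res_{O/k}\bbG_m$ (this is the content of \cite[Proposition 6.5]{PR08}). Your reduction to $G=G^{\mathrm{sc}}$ in the \emph{if} direction also matches the paper's étale-kernel argument.

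However, the core of your sufficiency argument — the claim that for simply connected $G$ the base $\Gr_G$ is reduced ``by \Cref{thm_splinter} applied at Iwahori level'' — is circular. \Cref{thm_splinter} concerns the \emph{semi-normalizations} $\Gr_{\calG,l,\leq w}^{\mathrm{sn}}$; it says nothing about whether the ambient ind-scheme $\Gr_\calG$ admits a presentation by reduced schemes, i.e.~whether $\Gr_\calG = \Gr_\calG^{\mathrm{red}}$. Closing that gap is precisely the content of the embedded normality theorem (\Cref{teo_seminormal_schubert}), whose proof occupies the rest of the paper and relies on the distribution-algebra machinery of Section 3. The paper is explicit about this: the simply connected case of \Cref{prop_reduced_loop_group} is proved via a forward reference to \Cref{lem_dist_iso_form_etale} and \Cref{prop_surj_dist_pm_simple_full}, by showing that $\Res_{F/k}^{\mathrm{red}}G \to \Res_{F/k}G$ is formally étale and then invoking \cite[Lemma 8.6]{HLR18}. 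A torsor argument over $\Gr_G$ cannot replace this because it presupposes exactly the reducedness one is trying to establish.

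The remaining case — $G$ semisimple with $p \mid \#\pi_1(G)$ — is where you yourself flag a gap, and it is a real one. Your proposed route via a cocharacter $\omega$ of order $p$ in $\pi_1(G)$ and a non-$p$-th-power unit in $k[\epsilon]/\epsilon^p\rpot{t}$ is a plausible starting point, but the step of ``identifying this reduction with the sheaf-theoretic closure of the image of $\Res_{F/k}G^{\mathrm{sc}}$'' would itself require essentially the same input as the full statement. The paper instead argues via Lie algebras: $\Res_{F/F^p}\mu$ is positive-dimensional since $\mu$ is non-étale, and one compares $\mathrm{Lie}(\Res_{F/F^p}G_{\mathrm{sc}}/\Res_{F/F^p}\mu_p) + \mathrm{Lie}(\calG)$ with $\mathrm{Lie}(G)$, observing that a proper $F^p$-subspace plus an $O^p$-lattice can never fill $\mathrm{Lie}(G)$. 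This gives a clean contradiction without needing to identify the reduced sub-ind-scheme explicitly.
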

\begin{proof}
	The non-reducedness for non-semisimple $G$ was proved in \cite[Proposition 6.5]{PR08}
	The obstruction for tori $T$ lies in the fact that the reduction of $\mathrm{Gr}_{T}$ is a zero-dimensional scheme locally of finite type, and hence $\mathrm{Lie}(\Res_{F/k}^{\mathrm{red}}T)=\mathrm{Lie}(\calT)$ is not equal to $\mathrm{Lie}(T)$. Let $A=G/G_{\mathrm{der}}$ denote the abelian quotient of $G$. 
	If $\Res_{F/k}G$ were reduced, then $\mathrm{Lie}(G)$ would map onto $\mathrm{Lie}(\calA) \subset \mathrm{Lie}(A)$, contradicting smoothness of the map of $F$-schemes $G\to A$.
	
	Assume that $G$ is semi-simple but $p$ divides the order of $\pi_1(G_{\mathrm{der}})$. In \cite[Proposition 7.10]{HLR18}, we proved that $\Res_{F/k}G$ is non-reduced when $G$ is tame. First, note that $G_{\mathrm{sc}}\to G$ is a central isogeny whose kernel $\mu$ is not étale. In particular, we know by \cite[Examples 1.3.2 and A.7.9]{CGP15} that $\Res_{F/F^p}\mu$ is positive-dimensional and hence $\Res_{F/F^p}G_{\mathrm{sc}}$ does not surject onto $\Res_{F/F^p}G$. Since Schubert varieties of isogenous group have isomorphic semi-normalizations, we deduce that if $\Res_{F/k}G$ were reduced, then we would have an equality \begin{equation}\mathrm{Lie}(\Res_{F/F^p}G_{\mathrm{sc}}/\Res_{F/F^p}\mu_p)+\mathrm{Lie}(\calG) =\mathrm{Lie}(G)\end{equation}
	of Lie algebras. But the left side is the sum of a proper $F^p$-subspace and an $O^p$-lattice, so this equality can never hold.
	
	Next, we consider the case when $G$ is simply connected, that was handled by \cite[Corollary 11]{Fal03} for split $G$ and \cite[Proposition 9.9]{PR08} for tame $G$. It will follow from our calculations of distribution algebras later on that $\Res_{F/k}^{\mathrm{red}}G \to \Res_{F/k}G$ is formally étale, e.g., see \Cref{lem_dist_iso_form_etale} and \Cref{prop_surj_dist_pm_simple_full}. Modding out by the pro-smooth group $\Res_{O/k}\calG$, we deduce that the reduction map of $\Gr_{\calG}$ is formally étale at every point by homogeneity. Now, \cite[Lemma 8.6]{HLR18} implies that it is an isomorphism. 
	
	Finally, we treat the case when $G$ is semisimple and $p$ does not divide the order of $\pi_1(G_{\mathrm{der}})$, due to \cite[Theorem 6.1]{PR08} for tame $G$. Notice that the kernel $\mu$ of $G_{\mathrm{sc}}\to G$ is étale. Consequently, $1_k=\Res_{F/k}1_F\to \Res_{F/k}\mu$ is formally étale, and the right side has finitely many points, so we conclude that it is also an étale $k$-scheme. In particular, $\Res_{F/k}G_{\mathrm{sc}}\to \Res_{F/k}G$ is an étale cover on neutral components and we deduce that $\Res_{F/k}G$ is also reduced.
\end{proof}

Next, we move to the problem of semi-normality. In this paper, we work mostly with the full loop group $\Res_{F/k}G$ during the proofs to fully exploit its multiplication law. However, for classical reasons, we state the result below for Schubert varieties in $\Gr_{\calG}$.
\begin{theorem}\label{teo_seminormal_schubert}
	If $p\nmid \#\pi_1(G_{\mathrm{der}})$, then $\Gr_{\calG,l,\leq w}$ is normal.
\end{theorem}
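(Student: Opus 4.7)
The strategy is to promote the normality of the semi-normalization $\Gr^{\mathrm{sn}}_{\calG,l,\leq w}$, granted by \Cref{thm_splinter}, to normality of $\Gr_{\calG,l,\leq w}$ itself. Since the canonical map $\Gr^{\mathrm{sn}}_{\calG,l,\leq w}\to \Gr_{\calG,l,\leq w}$ is a finite birational universal homeomorphism with trivial residue field extensions, it is an isomorphism precisely when the target is semi-normal; this is the property I will establish. A first reduction is to the Iwahori case: picking an Iwahori $\calI$ obtained from $\calG$ by dilatation, the map $\Gr_\calI\to \Gr_\calG$ is proper smooth with connected fibers, so semi-normality at level $\calI$ descends to level $\calG$.

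To analyze the Iwahori Schubert varieties, I would exploit the homogeneity of the $\Res_{F/k}^{\mathrm{red}}G$-action on $\Gr_\calI^{\mathrm{red}}$: by translation, one can test semi-normality infinitesimally at the base point of each stratum, reducing the question to whether the map $\Res_{F/k}^{\mathrm{sn}}G\to \Res_{F/k}^{\mathrm{red}}G$ is formally étale. By \Cref{lem_form_etale_completion} and \Cref{lem_form_etale_local_rings}, this is equivalent to the induced map on solid distribution algebras being an isomorphism, so the problem becomes an entirely infinitesimal one.

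Here the Takeuchi-type Serre presentation of $\mathrm{Dist}(\Res_{F/k}G)$ from \cite{Tak83a,Tak83b}, suitably adapted to loop groups, takes over: it expresses the algebra in terms of its rank one subalgebras together with commutation rules between divided powers of root vectors, reducing the formal étaleness statement to the case of Weil restrictions of $\SL_2$ and $\SU_3$. For $\Res_{F'/F}\SL_2$ the explicit lci presentation of \Cref{prop_lci} yields normality (and a fortiori semi-normality) of the relevant Schubert varieties directly, so no further work is needed. For $\Res_{F'/F}\SU_3$ the corresponding rank one calculation is not accessible in closed form, and here I would route through the Beilinson--Drinfeld Grassmannian of \Cref{teo2_intro}: when $\calG$ is special with simply connected reductive quotient, $\mathrm{Dist}(\Res_{O^2_\circ/O}\calG)$ is generated by its unipotent part, and the generic fiber degenerates to an $\SL_3$-computation where \Cref{prop_lci} again applies, so semi-normality of the BD Grassmannian propagates to its special fiber via the unibranch theorem of \cite{GL22}.

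The main obstacle is the $\Res_{F'/F}\SU_3$ case at $p=2$, where the Faltings--Pappas--Rapoport method of integral Lie-algebra lifts collapses: the two-dimensional integral model needed there does not exist. The Beilinson--Drinfeld detour sidesteps this, but one must orchestrate the proofs of \Cref{teo1_intro} and \Cref{teo2_intro} together, so that the $\varphi$-splitting ingredient of \Cref{thm_splinter} and the unibranch input of \cite{GL22} are applied without creating a circular dependency between the two theorems.
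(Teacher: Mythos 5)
Your proposal follows essentially the same route as the paper: reduce via formal étaleness of the semi-normalization map on the loop group (\Cref{lem_reduction_step_form_etale}), use a Takeuchi-type Serre presentation of the loop distribution algebra (\Cref{thm_serre_pres_loop}) to reduce to rank one (\Cref{prop_seminormal_reducao_rk1}), handle $\mathrm{SL}_2$ by the lci computation (\Cref{prop_lci}), and route $\mathrm{SU}_3$ through the Beilinson--Drinfeld deformation (\Cref{prop_gen_unip_bd_sc_red_quot}, \Cref{cor_seminormal_all}).

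One point to sharpen: you correctly flag the potential circularity between \Cref{teo1_intro} and \Cref{teo2_intro}, but you describe the Beilinson--Drinfeld detour as ``sidestepping'' the $\mathrm{SU}_3$ analysis, which is not quite how the paper breaks the loop. The circularity is resolved by proving the rank-one case of \Cref{teo_seminormal_modelo_local} \emph{independently} of \Cref{teo_seminormal_schubert} in \Cref{lem_seminormal_modelo_local_rk1}: this still requires a direct hands-on $\mathrm{SU}_3$ computation at Iwahori level for tiny coweights, using the explicit equation \eqref{funct_eq_su3} to show that the two $2$-dimensional Schubert varieties intersect along a reduced union of $\bbP^1$'s, combined with rationality of semi-normalized Schubert varieties to pass between parahoric levels and to reduce to tiny coweights. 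Only with this independent rank-one input in hand can one then combine the generic-fiber $\mathrm{SL}_3$ isomorphism with the unipotent generation of distributions to conclude formal étaleness of \eqref{eq_seminormalization_bdloop} and specialize. Also, the Iwahori reduction you invoke at the outset is unnecessary: working directly at the level of the full loop group $\Res_{F/k}G$, as in \Cref{lem_reduction_step_form_etale}, handles every parahoric simultaneously by quotienting out the pro-smooth jet group on the right.
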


In particular, we deduce by \Cref{thm_splinter} that the Schubert varieties $\mathrm{Gr}_{\calG,\leq w}$ are normal, Cohen--Macaulay, rational, and globally $+$-regular.
This result is found in \cite[Theorem 8]{Fal03} for split $G$, \cite[Theorem 8.4]{PR08} for tame $G$, and \cite[Theorem 4.23]{FHLR22} if $p>2$ or $G$ is $\mathrm{SU}_3$-free. As mentioned in the introduction, those papers employ Lie-theoretic considerations to ad hoc $W(k)$-lifts of $\Res_{F/k}G$. In this paper, we argue via distributions and therefore our proof must be postponed until the very end, see \Cref{cor_su3_free_seminormal,cor_seminormal_all}. Before moving on, let us perform a helpful reduction step.

\begin{lemma}\label{lem_reduction_step_form_etale}
	\Cref{teo_seminormal_schubert} holds if $\Res_{F/k}^{\mathrm{sn}}G_{\mathrm{sc}}\to \Res_{F/k}G_{\mathrm{sc}}$ is formally étale.
\end{lemma}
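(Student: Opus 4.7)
The plan is to deduce that each semi-normalization map $\sigma_w\colon \Gr_{\calG,l,\leq w}^{\mathrm{sn}} \to \Gr_{\calG,l,\leq w}$ is an isomorphism. Since \Cref{thm_splinter} already gives normality (hence semi-normality) of the source, $\sigma_w$ is a finite surjective universal homeomorphism with trivial residue field extensions, and it suffices to check formal étaleness at each closed point: by \Cref{lem_form_etale_local_rings} this produces an isomorphism of formal completions, and a local ring sitting inside a finite overring with the same residue field and the same completion must coincide with that overring.

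First I would reduce to $G = G_{\mathrm{sc}}$. Because $p \nmid \#\pi_1(G_{\mathrm{der}})$, the kernel of the central isogeny $G_{\mathrm{sc}} \to G_{\mathrm{der}}$ is étale, so, as in the argument of \Cref{prop_reduced_loop_group}, the induced map on loop groups is an étale cover on neutral components. Passing to affine flag varieties and translating across the components indexed by $\pi_1$, this identifies Schubert varieties of $\calG$ and $\calG_{\mathrm{sc}}$ up to universal homeomorphism, so their semi-normalizations agree. It is therefore enough to prove the normality assertion for the parahoric $\calG_{\mathrm{sc}}$ attached to the simply connected cover.

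Next I would transfer the loop-group hypothesis to the affine Grassmannian. By functoriality of semi-normalization, the right action of the pro-smooth jet group $\Res_{O/k}\calG_{\mathrm{sc}}$ on $\Res_{F/k}G_{\mathrm{sc}}$ lifts to $\Res_{F/k}^{\mathrm{sn}}G_{\mathrm{sc}}$, and \Cref{thm_splinter} ensures that the étale quotient $\Gr_{\calG_{\mathrm{sc}}}^{\mathrm{sn}}$ exists as an ind-scheme mapping to $\Gr_{\calG_{\mathrm{sc}}}$ and restricting to $\sigma_w$ on each stratum. For a closed point $x$ in a Schubert variety I would choose a lift $\tilde{x}\in \Res_{F/k}^{\mathrm{red}}G_{\mathrm{sc}}$ and observe that the orbit map $\Res_{O/k}\calG_{\mathrm{sc}}\to \Res_{F/k}G_{\mathrm{sc}}$ through $\tilde{x}$ is formally smooth; this presents the formal completion at $x$ as a formally smooth quotient of the formal completion at $\tilde{x}$, compatibly with semi-normalization. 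Consequently, the formal étaleness hypothesis on loop groups descends to formal étaleness of $\sigma_w$ at $x$.

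The main obstacle is the rigorous descent of formal étaleness along the non-representable torsor $\Res_{F/k}G_{\mathrm{sc}} \to \Gr_{\calG_{\mathrm{sc}}}$: ordinary fpqc descent does not apply directly since the groups involved are ind-schemes of infinite type. I would circumvent this by testing the lifting property one nilpotent thickening $(W,w)\subset(Z,z)$ at a time, where only a finite jet quotient $\Res_{O_n/k}\calG_{\mathrm{sc}}$ of $\Res_{O/k}\calG_{\mathrm{sc}}$ is seen, and classical smoothness makes descent routine; the compatibility with \Cref{lem_form_etale_completion} keeps track of the formal information. Once formal étaleness of $\sigma_w$ is verified pointwise, the scheme-theoretic conclusion is immediate from the opening paragraph, completing the reduction of \Cref{teo_seminormal_schubert} to the distributional input that $\Res_{F/k}^{\mathrm{sn}}G_{\mathrm{sc}} \to \Res_{F/k}G_{\mathrm{sc}}$ is formally étale.
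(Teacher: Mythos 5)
Your proposal follows the same skeleton as the paper's proof: reduce to $G=G_{\mathrm{sc}}$ via the étale cover $\Res_{F/k}G_{\mathrm{sc}}\to (\Res^{\mathrm{red}}_{F/k}G)^\circ$, push formal étaleness of $\Res_{F/k}^{\mathrm{sn}}G_{\mathrm{sc}}\to \Res_{F/k}G_{\mathrm{sc}}$ through the pro-smooth jet torsor to $\Gr^{\mathrm{sn}}_{\calG}\to\Gr_\calG$, then conclude that the semi-normalization map of each embedded Schubert variety is an isomorphism. Your final step — passing to completed local rings and using faithful flatness of $\widehat{A}$ over a noetherian local $A$ to kill the finite cokernel — is a perfectly good, slightly more hands-on substitute for the paper's appeal to the Stacks Project (unramified + universal homeomorphism $\Rightarrow$ closed immersion), and your remarks on why descent along the non-representable jet torsor is harmless (every nilpotent test only sees a finite jet quotient, which is genuinely smooth) correctly articulate what the paper leaves implicit.

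There is, however, a slip in your reduction to $G_{\mathrm{sc}}$ that, taken literally, does not close the gap. You write that the passage to flag varieties identifies Schubert varieties of $\calG$ and $\calG_{\mathrm{sc}}$ ``up to universal homeomorphism, so their semi-normalizations agree,'' and conclude it is enough to treat $\calG_{\mathrm{sc}}$. Agreement of semi-normalizations across a universal homeomorphism only transfers semi-normality of the sources, not normality of the targets: if $X\to Y$ is a universal homeomorphism with $X$ normal, $Y$ need not be normal (cuspidal cubic covered by $\bbA^1$). What saves the day — and what the paper actually invokes — is that since $p\nmid\#\pi_1(G_{\mathrm{der}})$ the kernel $\mu$ is étale, so $\Res_{F/k}G_{\mathrm{sc}}\to\Res_{F/k}G$ is étale on neutral components, and consequently so is the induced map on (reduced) Schubert varieties after translating to the neutral component; normality then transports along an étale morphism, not merely along a universal homeomorphism. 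If you replace ``up to universal homeomorphism'' by ``up to étale cover,'' the inference is sound and the rest of your argument goes through as written.
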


\begin{proof}
First of all, note that, if $p$ does not divide the order of $\pi_1(G_{\mathrm{der}})$, the neutral component of $\Res_{F/k}^{\mathrm{red}}G$ admits $\Res_{F/k}G_{\mathrm{sc}}$ as an étale cover by the proof of \Cref{prop_reduced_loop_group}, so this reduces our problem to simply connected $G$. 
Suppose that $\Res_{F/k}^{\mathrm{sn}}G\to \Res_{F/k}G$ is formally étale. Quotienting out the right action of the pro-smooth jet group $\Res_{O/k}\calG$, we deduce that $\Gr^{\mathrm{sn}}_{\calG} \to \Gr_{\calG}$ is formally étale. Restricting to Schubert varieties, we also see that $\Gr_{\calG,\leq w}^{\mathrm{sn}}\to \Gr_{\calG,\leq w}$ is formally unramified. This implies by \cite[Tag 04XV]{StaProj} that the semi-normalization map is a closed immersion, and thus an isomorphism, around the identity. By $\Res_{O/k}\calG$-equivariance, this propagates to the entire variety, which is normal by \Cref{thm_splinter}.
\end{proof}

\subsection{Beilinson--Drinfeld deformation}
In this subsection, we discuss the deformation of the $k$-ind-scheme $\Gr_{\calG}$ to the ring of integers $O$ as defined by Beilinson--Drinfeld \cite{BD91}. This means we have to consider the category of qcqs pointed ind-schemes $(X,x)$ over $O$, where $x$ stands for a section of the structure map $X \to \Spec(O)$. We can still define corresponding $O$-relative versions of the formal completion and the ring of formal sections, where the latter carries the structure of a solid commutative $O$-algebra for the $t$-adic topology, compare with \cite[Proposition 7.9]{CS19}. Note that \Cref{lem_form_etale_completion,lem_form_etale_local_rings} hold also in the $O$-relative setting with the same proofs. Similarly, we can define the various properties such as separated, proper, affine, reduced, and semi-normal. 

A novelty here is the notion of flatness over $O$ (this is automatic over a field), which gives rise to a functor $X \mapsto X^{\mathrm{fl}}$ given as the scheme-theoretic image of the generic fiber and called the flat closure, see \cite[Definition 8.3]{HLR18}. We say moreover that $(X,x)$ is formally flat if the formal completion is flat as an ind-scheme. Note that a normal flat $O$-scheme of finite type is formally flat because localizations of formal schemes are flat and normality is preserved under completion for excellent rings.

Let $\calX$ be a finite type affine $O$-scheme. We define the loop space $\Res_{O_\circ^2/O}\calX$ as the affine ind-scheme representing the functor $A \mapsto \calX(A\rpot{t-a})$ on $O$-algebras. Here, $a$ denotes the image of $t \in O$ in $A$ via the structure map and we regard $A\rpot{t-a}$ as an $O$-algebra instead via the formal variable $t$. Similarly, we have a closed affine subscheme $\Res_{O^2/O}\calX \subset \Res_{O^2_\circ/O}\calX$ called the jet space and representing the functor $A \mapsto \calX(A\pot{t-a})$ on $O$-algebras. \Cref{lem_res_jet,lem_res_loop} admit counterparts in the $O$-relative setting and we also have the following flatness result.

\begin{lemma}\label{lem_flat_bdloop}
	If $G$ is quasi-split with induced maximal torus $T$, then the ind-scheme $\Res_{O^2_\circ/O}\calG$ is formally flat.
\end{lemma}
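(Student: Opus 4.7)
The plan is to reduce the question via the Bruhat big cell to formal flatness of the loop spaces of $\bbG_{\mathrm{m}}$ and $\bbG_{\mathrm{a}}$, and then verify these two model cases by an explicit computation with the pole-order filtration.

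First, since $G$ is quasi-split with induced maximal torus $T$, the parahoric model $\calG$ contains an open big cell of the form $\calU^-\cdot\calT\cdot\calU^+$ containing the identity section, where $\calU^\pm$ are unipotent pieces built as iterated products of affine root subgroup schemes and the multiplication map realizes the cell as an open subscheme isomorphic to $\calU^-\times\calT\times\calU^+$. As open immersions are formally étale, the $O$-relative analog of \Cref{lem_res_loop} shows that the induced map $\Res_{O^2_\circ/O}(\calU^-\cdot\calT\cdot\calU^+)\to\Res_{O^2_\circ/O}\calG$ is formally étale, and \Cref{lem_form_etale_completion} identifies their formal completions at the identity section.

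Second, since $\Res_{O^2_\circ/O}$ commutes with finite products and the formal completion of a product at a product point is a completed product of the individual formal completions (which preserves $O$-flatness), it suffices to verify formal flatness of $\Res_{O^2_\circ/O}\calT$ and of each $\Res_{O^2_\circ/O}\calU_\alpha$. Since $T$ is induced, $\calT$ is built from Weil restrictions $\Res_{O'/O}\bbG_{\mathrm{m}}$ along finite étale $O$-algebras $O'$; likewise, the quasi-split hypothesis presents each $\calU_\alpha$ as (an iterated extension of) $\Res_{O_\alpha/O}\bbG_{\mathrm{a}}$. Using the identity $\Res_{O^2_\circ/O}\circ\Res_{O'/O}=\Res_{O'/O}\circ\Res_{(O')^2_\circ/O'}$, which holds since étale base change commutes with the Laurent series construction on $A$-algebras, and the fact that $\Res_{O'/O}$ preserves flatness, we reduce to formal flatness of $\Res_{O^2_\circ/O}\bbG_{\mathrm{m}}$ and $\Res_{O^2_\circ/O}\bbG_{\mathrm{a}}$ over any étale extension of $O$.

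Third, these model cases are resolved by the natural filtration by pole order at $t=a$. For $\bbG_{\mathrm{a}}$, the $n$-th level represents the functor $A\mapsto (t-a)^{-n}A\pot{t-a}$, which as a free $A$-module of countable rank on the basis $(t-a)^i$, $i\geq -n$, presents it as an infinite-dimensional affine space over $O$, hence pro-smooth and $O$-flat. For $\bbG_{\mathrm{m}}$, the analogous subscheme is the Zariski open locus defined by invertibility of the leading coefficient, again pro-smooth and $O$-flat. The formal completion at the identity is a filtered colimit of nilpotent subschemes contained in these flat levels, and the corresponding ring of formal sections is a $t$-adic limit of flat solid $O$-algebras, yielding the desired formal flatness.

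The main obstacle is the non-reduced root case, arising for quasi-split ramified odd unitary groups at $p=2$, where the affine root subgroup schemes $\calU_\alpha$ are 2-step unipotent extensions rather than plain Weil restrictions of $\bbG_{\mathrm{a}}$. There one must perform an additional dévissage through the composition series of $\calU_\alpha$, using that an extension of formally flat smooth affine group schemes remains formally flat after applying $\Res_{O^2_\circ/O}$, which reduces the situation back to the $\bbG_{\mathrm{a}}$-model treated above.
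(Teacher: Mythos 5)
The big-cell reduction is the right opening move, but your next step contains a genuine gap. You attempt to handle each factor $\calT$ and $\calU_\alpha$ separately by commuting the Beilinson--Drinfeld loop construction across Weil restriction, asserting the identity $\Res_{O^2_\circ/O}\circ\Res_{O'/O}=\Res_{O'/O}\circ\Res_{(O')^2_\circ/O'}$ on the grounds that the relevant $O'$ are finite \emph{étale} over $O$. This is false in the intended generality: for a quasi-split, not necessarily tame $G$, the connected Néron model of an induced torus is $\calT=\prod_i\Res_{O_i/O}\bbG_{\mathrm m}$ and the root groups are built from $\Res_{O_\alpha/O}$'s where $O_i,O_\alpha$ are typically \emph{ramified} over $O$. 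In the ramified case the claimed commutation breaks down (e.g.\ for $O'=O[\pi]/(\pi^e-t)$ with $e>1$, one has $A\rpot{t-a}\otimes_O O'\not\cong (A\otimes_O O')\rpot{\pi-b}$ in general), so the proposed reduction to $\Res_{O^2_\circ/O}\bbG_{\mathrm a}$ and $\Res_{O^2_\circ/O}\bbG_{\mathrm m}$ over a single uniformizer does not go through precisely in the cases where the lemma has content.

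The paper's proof avoids the issue by downgrading from a group-theoretic to a scheme-theoretic observation: the big cell $\calC=\calU^-\times_O\calT\times_O\calU^+$, viewed as a pointed $O$-scheme, is isomorphic to an open neighborhood of the origin in $\bbA^n_O$ --- each $\calU^\pm$ is an affine space, and $\calT$ is a product of $\Res_{O_i/O}\bbG_{\mathrm m}$, each of which is open inside $\Res_{O_i/O}\bbA^1_{O_i}\cong\bbA^{[O_i:O]}_O$, regardless of ramification. The $O$-relative version of \Cref{lem_res_loop} then identifies the formal completion of $\Res_{O^2_\circ/O}\calG$ with that of $\Res_{O^2_\circ/O}\bbA^n_O$, whose formal flatness is immediate. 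This also shows that your concluding dévissage through the composition series of $\calU_\alpha$ for the odd unitary case is unnecessary: formal flatness only sees the pointed scheme underlying $\calU_\alpha$, which is an affine space irrespective of the group law; no reduction to genuine $\bbG_{\mathrm a}$-extensions is needed.
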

\begin{proof}
	This is proved in \cite[Corollary 8.5, Proposition 8.8]{HLR18} for tame $G$ and without assuming $T$ to be induced. Note that $\calG$ has a big cell $\calC:=\calU^-\times \calT \times \calU^+$ as an open neighborhood of the identity by \cite[Théorème 3.8.1]{BT84}, where products are fibered over $O$. The big cell itself can be identified with an open neighborhood of the zero section in $\bbA_O^n$, where $n$ equals the dimension of $G$. In particular, by the $O$-relative variant of \Cref{lem_res_loop}, the loop spaces $\Res_{O^2_\circ/O}\calG$ and $ \Res_{O^2_\circ/O}\bbA_{O}^n$, have isomorphic formal completions, and one checks easily that the latter is formally flat.
\end{proof}

For a parahoric $O$-model $\calG$ of a given connected reductive $F$-group $G$, we can define the affine Grassmannian
\begin{equation}
	\Gr_{\calG,O}:=\Res_{O^2_\circ/O}\calG/\Res_{O^2/O}\calG
\end{equation}
as the quotient for the étale topology and it is representable by a projective ind-scheme by \cite[Proposition 5.5]{PZ13}. Its generic fiber $\Gr_{G,F}$ is the usual affine Grassmannian attached to the $F$-group $G$, whereas the special fiber $\Gr_{\calG,k}$ is the affine flag variety we've studied so far. Let $\mu$ be a conjugacy class of geometric coweights of $G$ with reflex field $E$, and consider the associated Schubert variety $\Gr_{G,E,\leq \mu}$. We define the local model $\Gr_{\calG,O_E,\leq \mu}$ as being the scheme-theoretic image of the former in $\Gr_{\calG,O_E}$.

\begin{theorem}\label{teo_seminormal_modelo_local}
	The semi-normal flat $O$-scheme $\mathrm{Gr}^{\mathrm{sn}}_{\calG,O_E,\leq\mu}$ has $\varphi$-split special fiber. In particular, it is normal, Cohen--Macaulay and $\varphi$-rational.
\end{theorem}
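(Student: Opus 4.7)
The plan is to combine Theorem~\ref{thm_splinter} on semi-normalized Schubert varieties in the affine flag variety with the unibranch theorem of \cite{GL22} on Beilinson--Drinfeld local models. The main assertion is that the special fiber of $\Gr^{\mathrm{sn}}_{\calG,O_E,\leq\mu}$ is $\varphi$-split; the ``In particular'' clause will then be a formal consequence of the general theory of $F$-singularities.

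First I would invoke the coherence theorem of Zhu \cite{Zhu14} to identify the special fiber of $\Gr_{\calG,O_E,\leq\mu}$ with a union $\bigcup_{w\in\Adm(\mu)}\Gr_{\calG,k,\leq w}$ of Schubert varieties indexed by the $\mu$-admissible set. By Theorem~\ref{thm_splinter}, the semi-normalizations $\Gr^{\mathrm{sn}}_{\calG,k,\leq w}$ are normal, Cohen--Macaulay and compatibly $\varphi$-split, so their union $\bigcup_w \Gr^{\mathrm{sn}}_{\calG,k,\leq w}$ is semi-normal and carries a global $\varphi$-splitting.

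The heart of the argument is then to identify $(\Gr^{\mathrm{sn}}_{\calG,O_E,\leq\mu})_k$ with $\bigcup_w \Gr^{\mathrm{sn}}_{\calG,k,\leq w}$. Since the semi-normalization map $\Gr^{\mathrm{sn}}_{\calG,O_E,\leq\mu}\to \Gr_{\calG,O_E,\leq\mu}$ is a finite universal homeomorphism with trivial residue extensions, this amounts to verifying that semi-normalization commutes with restriction to the closed fiber in this setting. The unibranch theorem of \cite{GL22} provides geometric unibranchness of $\Gr_{\calG,O_E,\leq\mu}$ at boundary points of the special fiber, which is exactly the ingredient needed to deduce this commutation: in a unibranch local ring, semi-normalization is compatible with passing to the reduction by a closed principal divisor. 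Once the identification is in place, $\varphi$-splitness of $(\Gr^{\mathrm{sn}}_{\calG,O_E,\leq\mu})_k$ follows from the compatible splittings furnished by Theorem~\ref{thm_splinter}.

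For the ``In particular'' statement, a $\varphi$-split scheme is a splinter and hence automatically normal and Cohen--Macaulay by \cite[Corollary 5.3]{Bha12}. Combining this with $O_E$-flatness and with normality of the generic fiber $\Gr^{\mathrm{sn}}_{G,E,\leq\mu}$ (which is itself normal and Cohen--Macaulay by another application of Theorem~\ref{thm_splinter}, now over $E$), one obtains normality and Cohen--Macaulayness of the total space; $\varphi$-rationality then follows from normality, Cohen--Macaulayness and the $\varphi$-splitting of the special fiber in the arithmetic $F$-rationality formalism developed in \cite{BMP+23}. The main obstacle is the comparison in step three, where the unibranch theorem of \cite{GL22} is indispensable; without it, one only sees the reduced special fiber of $\Gr^{\mathrm{sn}}_{\calG,O_E,\leq\mu}$ as some universal homeomorphism cover of $\bigcup_w \Gr^{\mathrm{sn}}_{\calG,k,\leq w}$, and extra care is needed to rule out that additional seminormal branches appear.
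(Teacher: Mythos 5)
Your high-level strategy — combine Theorem~\ref{thm_splinter} with the unibranch theorem of \cite{GL22} and then identify the special fiber of $\Gr^{\mathrm{sn}}_{\calG,O_E,\leq\mu}$ with a semi-normalized union of Schubert varieties — is indeed the same strategy as in the paper, and the deduction of normality, Cohen--Macaulayness and $\varphi$-rationality from the $\varphi$-split special fiber is handled the same way (formally, as in \cite{FHLR22}). However, there are two genuine gaps in the intermediate steps.

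First, you open by invoking the coherence theorem of Zhu \cite{Zhu14} to identify the special fiber of $\Gr_{\calG,O_E,\leq\mu}$ with $\bigcup_{w}\Gr_{\calG,k,\leq w}$. But Zhu's result is established only for tamely ramified groups, precisely the case the paper is trying to supersede; using it here makes the argument circular for general $G$. The paper instead gets at the structure of the special fiber indirectly: unibranchness plus Zariski's main theorem implies that the jet-orbit of a representative of $\mu$ is an \emph{open} immersion, and combining this with the topological identification of \cite{HR21,AGLR22} gives only \emph{generic} reducedness. Full reducedness of the special fiber of the normalization then comes from the $S_1$ property (via $O_E$-flatness) together with Serre's criterion.

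Second, and more seriously, you assert that ``in a unibranch local ring, semi-normalization is compatible with passing to the reduction by a closed principal divisor.'' This is not a standard fact, and it is not what \cite{GL22} gives you. Unibranchness says that the normalization $\to$ local model is a universal homeomorphism; it does not by itself control how semi-normalization interacts with taking the closed fiber. This is exactly why the paper needs the extra maneuver: after establishing that the special fiber of the normalization is reduced, it uses weak normality of $\Gr^{\mathrm{sn}}_{\calG,k,\leq\mu}$ (from $\varphi$-splitness, Theorem~\ref{thm_splinter}) to map $\Gr^{\mathrm{sn}}_{\calG,k,\leq\mu}$ \emph{onto} that special fiber, and then — crucially, and entirely absent from your proposal — passes to a $z$-extension and invokes Theorem~\ref{teo_seminormal_schubert} to produce a map in the \emph{other} direction. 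Since both sides are reduced, the two maps are mutually inverse. Without Theorem~\ref{teo_seminormal_schubert} (embedded normality of Schubert varieties), the desired identification of special fibers does not follow; your proposal is missing this dependency entirely, which is also why the paper later has to give an independent rank~$1$ proof of the present theorem in Lemma~\ref{lem_seminormal_modelo_local_rk1} to break the logical cycle.
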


\begin{proof}
This result is proved in \cite[Theorems 3.8 and 3.9]{Zhu14} and \cite[Theorem 2.1]{HR22} for tame $G$, in \cite[Theorem 5.4]{FHLR22} when $p>2$ or $\mathrm{SU}_3$-free $G$.
We now prove this for all $G$ building on \Cref{teo_seminormal_schubert}. 
In \cite[Theorem 1.3]{GL22}, we proved that $\mathrm{Gr}_{\calG,O_E,\leq \mu}$ is unibranch, i.e., its normalization is a universal homeomorphism, by a nearby cycle calculation relying on the Wakimoto filtration of \cite[Theorem 4]{AB09}, compare also with \cite[Theorem 4.17]{ALWY23}. One sees that the inclusion in $\mathrm{Gr}_{\calG,O_E,\leq \mu}$ of the jet group orbit of any representative $\nu \in X_*(T)$ of $\mu$ is a universal homeomorphism onto its image by Zariski's main theorem and unibranchness of local models. Now, proper monomorphisms are closed immersions, but both schemes above are flat and reduced, so we conclude that the orbit map is an open immersion, compare with \cite[Corollary 2.14]{Ric16}. Joint with \cite[Theorem 6.12]{HR21}, compare also with \cite[Theorem 6.16]{AGLR22}, this implies that the special fiber of $\Gr_{\calG,O_E,\leq \mu}$ is generically reduced. By flatness, the special fiber of the normalization of the local model is also $S_1$, hence itself reduced by Serre's criterion. In particular, the special fiber of the normalization is covered by the $\varphi$-split, hence weakly normal, variety $\mathrm{Gr}_{\calG,k,\leq \mu}^{\mathrm{sn}}$ in light of \Cref{thm_splinter}, so this normalization equals $\mathrm{Gr}^{\mathrm{sn}}_{\calG,O_E,\leq \mu}$. On the other hand, covering $G$ by a $z$-extension and invoking \Cref{teo_seminormal_schubert}, we can show that the special fiber also maps to $\mathrm{Gr}_{\calG,k,\leq \mu}^{\mathrm{sn}}$. Since it is reduced, we deduce that this map is an isomorphism. 
As for the remaining assertions, we can derive them formally from the $\varphi$-splitness of the special fiber just like in \cite[Theorem 5.4]{FHLR22}.
\end{proof}

Note that during the above proof, we came really close to showing the embedded version of the previous normality theorem.

\begin{theorem}\label{teo_normal_mod_local_mergulhado}
	If $p\nmid \#\pi_1(G_{\mathrm{der}})$, then $\mathrm{Gr}_{\calG,O_E,\leq\mu}$ is normal.
\end{theorem}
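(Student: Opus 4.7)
My plan is to deduce \Cref{teo_normal_mod_local_mergulhado} from \Cref{teo_seminormal_modelo_local,teo_seminormal_schubert} via a formal Nakayama argument. By \Cref{teo_seminormal_modelo_local}, the semi-normalization map $\nu\colon \mathrm{Gr}^{\mathrm{sn}}_{\calG,O_E,\leq\mu}\to \mathrm{Gr}_{\calG,O_E,\leq\mu}$ coincides with the normalization; its source is $O_E$-flat and normal, with special fiber canonically identified with $\mathrm{Gr}^{\mathrm{sn}}_{\calG,k,\leq\mu}$, and the local model itself is unibranch \cite{GL22}, so $\nu$ is a universal homeomorphism. Under the hypothesis $p\nmid \#\pi_1(G_{\mathrm{der}})$, \Cref{teo_seminormal_schubert} implies that both the generic fiber $\mathrm{Gr}_{G,E,\leq\mu}$ and the reduced special fiber $\mathrm{Gr}_{\calG,k,\leq\mu}$ are already normal; in particular, $\nu$ is an isomorphism on generic fibers, and the source's special fiber gets identified with $\mathrm{Gr}_{\calG,k,\leq\mu}$.

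Let $Q:=\mathrm{coker}(\calO_{\mathrm{Gr}_{\calG,O_E,\leq\mu}}\hookrightarrow \nu_{\ast}\calO_{\mathrm{Gr}^{\mathrm{sn}}_{\calG,O_E,\leq\mu}})$, a coherent sheaf on the local model supported on the special fiber. By a stalk-wise Nakayama argument, using that a uniformizer $\varpi$ of $O_E$ sits in every maximal ideal along the special fiber, it suffices to show $Q\otimes_{O_E} k = 0$, equivalently that the reduction modulo $\varpi$ of the above inclusion is surjective. Since $\nu_k$ has reduced source, it factors as $\mathrm{Gr}_{\calG,k,\leq\mu}\to (\mathrm{Gr}_{\calG,O_E,\leq\mu})_k^{\mathrm{red}}\hookrightarrow (\mathrm{Gr}_{\calG,O_E,\leq\mu})_k$, whose composite on structure sheaves is the natural quotient by the nilradical, followed by a map $\calO_{(\mathrm{Gr}_{\calG,O_E,\leq\mu})_k^{\mathrm{red}}}\to \nu_{k,\ast}\calO_{\mathrm{Gr}_{\calG,k,\leq\mu}}$. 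Since $\nu_k$ is a universal homeomorphism, the source and target here are reduced closed subschemes of $\mathrm{Gr}_{\calG,k}$ with identical underlying topological space and hence coincide as schemes; the second map is thus an isomorphism, the composition is surjective, and Nakayama's lemma forces $Q=0$, so $\nu$ is an isomorphism and $\mathrm{Gr}_{\calG,O_E,\leq\mu}$ is normal.

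The main obstacle in this strategy is not the deduction above but rather \Cref{teo_seminormal_schubert} itself, which constitutes the principal novel content of the paper via the distribution-algebra machinery. Once that is in hand, the passage from abstract to embedded normality at the level of local models is a short and uniform formal argument, bypassing the case-by-case $W(k)$-lifting strategies used in \cite{FHLR22}.
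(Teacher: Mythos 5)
Your argument is correct and follows essentially the same route as the paper: identify the special fiber of $\mathrm{Gr}^{\mathrm{sn}}_{\calG,O_E,\leq\mu}$ with the (normal, by \Cref{teo_seminormal_schubert}) Schubert variety $\mathrm{Gr}_{\calG,k,\leq\mu}$, deduce that $\nu_k$ is a closed immersion (equivalently, $\nu^\ast$ is surjective mod $\varpi$), then apply Nakayama using $O_E$-flatness of the source. The only imprecision is the step ``the source and target \ldots coincide as schemes; the second map is thus an isomorphism'': two coinciding closed subschemes of $\mathrm{Gr}_{\calG,k}$ connected by a \emph{given} universal homeomorphism need not be related by an isomorphism (Frobenius is a counterexample); what one should say is that the target $(\mathrm{Gr}_{\calG,O_E,\leq\mu})_k^{\mathrm{red}}=\mathrm{Gr}_{\calG,k,\leq\mu}$ is normal, hence semi-normal, so the universal homeomorphism with trivial residue field extensions $\nu_k^{\mathrm{red}}$ is forced to be an isomorphism by the very definition of semi-normality. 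With that one-line fix the proof is complete and matches the paper's.
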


\begin{proof}
	This follows from \Cref{teo_seminormal_schubert} and \Cref{teo_seminormal_modelo_local} as follows. The special fiber of $\mathrm{Gr}^{\mathrm{sn}}_{\calG,O_E,\leq \mu}$ is semi-normal, hence it embeds into the ind-scheme $\mathrm{Gr}^{\mathrm{sn}}_{\calG,k}$. Hence, the semi-normalization of the local model is an isomorphism in the generic fiber and a closed immersion in the special fiber, so it must be an isomorphism by flatness and Nakayama's lemma.
\end{proof}

Before concluding this section, we state a helpful version of \Cref{teo_seminormal_modelo_local} at the ind-scheme level.

\begin{corollary}\label{lem_ind_mod_loc_semi_norm}
Assume $G$ is simply connected. Then, $\Gr^{\mathrm{sn}}_{\calG,O}$ is an ind-scheme and its special fiber identifies with $\Gr_{\calG,k}^{\mathrm{sn}}$.	
\end{corollary}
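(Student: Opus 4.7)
The goal is to deduce the corollary from the normality theorems for local models that have already been established. Since $G$ is simply connected, $\pi_1(G_{\mathrm{der}})=1$, so the hypothesis $p\nmid\#\pi_1(G_{\mathrm{der}})$ of \Cref{teo_seminormal_schubert} and \Cref{teo_normal_mod_local_mergulhado} is automatically satisfied, and both results are available unconditionally throughout the argument.

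First I would apply \Cref{teo_normal_mod_local_mergulhado} to conclude that each local model $\Gr_{\calG,O_E,\leq\mu}$ is normal, hence semi-normal, so it coincides with its own semi-normalization: $\Gr^{\mathrm{sn}}_{\calG,O_E,\leq\mu}=\Gr_{\calG,O_E,\leq\mu}$. The Beilinson--Drinfeld Grassmannian $\Gr_{\calG,O}$ is presented as a filtered colimit of (Weil restrictions to $O$ of) such local models along closed immersions, as $\mu$ ranges over Galois orbits of dominant coweights. Because the semi-normalization functor acts trivially on each stage, this presentation passes directly to $\Gr^{\mathrm{sn}}_{\calG,O}$: the transition maps remain closed immersions, so $\Gr^{\mathrm{sn}}_{\calG,O}$ is representable as an ind-scheme in our strict sense---in fact canonically identified with $\Gr_{\calG,O}$ itself.

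Next I would analyze the special fiber stage by stage. By the proof of \Cref{teo_seminormal_modelo_local}, the special fiber of $\Gr^{\mathrm{sn}}_{\calG,O_E,\leq\mu}$ is identified with $\Gr^{\mathrm{sn}}_{\calG,k,\leq\mu}$ (equivalently, with $\Gr_{\calG,k,\leq\mu}$ itself in the simply connected case by \Cref{teo_seminormal_schubert}). Passing to the colimit over $\mu$ and using that the Schubert varieties $\Gr_{\calG,k,\leq w}$ exhaust $\Gr_{\calG,k}$ by closed immersions, I obtain that the special fiber of $\Gr^{\mathrm{sn}}_{\calG,O}$ coincides with $\Gr^{\mathrm{sn}}_{\calG,k}$, which is the claimed identification.

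The conceptual obstacle---namely, checking that semi-normalization interacts well with the closed inclusions between local models so that the resulting presentation still has closed-immersion transitions---is precisely what is dissolved in the simply connected case by the normality of each constituent. All the genuine work has been absorbed into \Cref{teo_seminormal_schubert} and \Cref{teo_normal_mod_local_mergulhado}, whose proofs rely on the distribution-algebra and Serre-presentation machinery developed later in the paper; once those inputs are granted, the corollary is essentially bookkeeping about how the ind-scheme structure and the semi-normalization functor assemble at the level of the exhausting filtration.
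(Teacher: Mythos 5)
Your argument is essentially correct as bookkeeping \emph{if} the theorems you cite were freely available, but you have fallen into a circularity trap that the paper goes to some trouble to avoid, and you make one overreaching claim that isn't established.

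The key issue is your reliance on \Cref{teo_normal_mod_local_mergulhado}. The corollary under discussion is invoked in \Cref{cor_seminormal_all} as part of the proof of \Cref{teo_seminormal_schubert} for $\mathrm{SU}_3$; but \Cref{teo_normal_mod_local_mergulhado} is proved using \Cref{teo_seminormal_schubert} (and \Cref{teo_seminormal_modelo_local}, which also uses it). So the chain ``\Cref{lem_ind_mod_loc_semi_norm} $\Rightarrow$ \Cref{cor_seminormal_all} $\Rightarrow$ \Cref{teo_seminormal_schubert} $\Rightarrow$ \Cref{teo_normal_mod_local_mergulhado} $\Rightarrow$ \Cref{lem_ind_mod_loc_semi_norm}'' is a genuine cycle. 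The paper instead cites only \Cref{thm_splinter} and \Cref{teo_seminormal_modelo_local}, and the reason this is safe is that \Cref{lem_seminormal_modelo_local_rk1} re-proves \Cref{teo_seminormal_modelo_local} in rank $1$ \emph{independently} of \Cref{teo_seminormal_schubert}; since \Cref{cor_seminormal_all} only needs \Cref{lem_ind_mod_loc_semi_norm} for $\mathrm{SU}_3$ (rank~$1$), the cycle is broken. There is no analogous rank-$1$ independent version of \Cref{teo_normal_mod_local_mergulhado}, so your route cannot be rescued in the same way. Your framing ``both results are available unconditionally throughout the argument'' is precisely the misconception: the normality theorems are \emph{targets}, not free inputs, and the order of dependencies matters.

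A second, smaller point: your conclusion that $\Gr^{\mathrm{sn}}_{\calG,O}$ is ``canonically identified with $\Gr_{\calG,O}$ itself'' is more than the statement claims and more than your argument proves. Even granting that each flat local model is normal, one only concludes that $\Gr^{\mathrm{sn}}_{\calG,O}$ agrees with the colimit of those flat local models. That colimit is the flat closure of $\Gr_{\calG,O}$ and need not equal $\Gr_{\calG,O}$ on the nose, since the ambient ind-scheme could have presentations involving non-flat or non-reduced pieces that the local models do not see; semi-normalization is insensitive to such thickenings, so it cannot detect the difference. The paper's proof carefully limits itself to the assertion that the semi-normalization sheaf is presentable by the $\Gr^{\mathrm{sn}}_{\calG,O,\leq N\rho^\vee}$ (after étale descent to the quasi-split case and using that $\rho^\vee$ dominates all $\mu$), which is exactly what is needed and no more. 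The paper's approach --- compare transition maps on geometric fibers via \Cref{thm_splinter} and \Cref{teo_seminormal_modelo_local}, then conclude by flatness and Nakayama --- is the shape your argument should take, with \Cref{teo_seminormal_modelo_local} rather than \Cref{teo_normal_mod_local_mergulhado} as the key input.
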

\begin{proof}
	By étale descent, we may enlarge $k$ so that $G$ is quasi-split by Steinberg's theorem. Let $\rho^\vee$ be the half-sum of all coroots. It is clear that for simply connected $G$, we have $\mu \leq N\rho^\vee$ for $N\gg 0$. Also the definition field of $\rho^\vee$ is $F$. In particular, we see that $\Gr_{\calG,O}^{\mathrm{sn}}$ equals the colimit of the local models $\Gr^{\mathrm{sn}}_{\calG,O,\leq N\rho^\vee}$ in the category of $O$-sheaves. We must show that the transition morphisms of our presentation are closed immersions. This is true on geometric fibers by \Cref{thm_splinter,teo_seminormal_modelo_local}, so we conclude by flatness and Nakayama's lemma.
\end{proof}

\section{Distributions}

In this section, we study the notion of distributions of ind-schemes, which arise as non-linear differential operators near a given point. They capture essentially the same information as formal completions at a point, but with the added bonus that group distributions form a solid associative Hopf $k$-algebra. We give a Serre presentation of $\mathrm{Dist}(\Res_{F/k}G)$ in terms of its rank $1$ and unipotent parts, and use this to prove the normality theorem. 
\subsection{Preparations}
During this section, we work again in the category of pointed ind-schemes $(X,x)$ over a finite field $k$, i.e., $X$ is an arbitrary ind-scheme and $x$ is a $k$-valued point of $X$. Our notion of distributions is similar to \cite[Definition 7.1]{HLR18}, except we drop the Artinian condition and take topological information into account. 

\begin{definition}
The solid $k$-module $\mathrm{Dist}(X,x)$ is the filtered colimit of the solid $k$-modules $\mathrm{Hom}_k(\Gamma(Z,\calO_Z),k)$, as $(Z,x)$ runs over all closed nilpotent pointed $k$-subschemes of $(X,x)$.
\end{definition}

The $k$-module $\mathrm{Dist}(X,x)$ has a natural solid structure in the sense of Clausen--Scholze \cite[Proposition 7.5, Theorem 8.1]{CS19}, because $k$ is a finitely generated $\bbZ$-algebra, and we can write the global sections of $Z$ as the filtered colimit of its finitely generated $k$-submodules, so its $k$-module dual equals the cofiltered limit of its finitely generated $k$-quotients. 
Note that $\mathrm{Dist}(X,x)$ only depends on the formal completion of $(X,x)$: in fact, it equals the solid $k$-dual of the ring of formal sections. Later on, whenever $x$ is understood, we will just omit it from the notation.

Given a morphism $(X,x)\to (Y,y)$ of pointed ind-$k$-schemes, there is a natural map $\mathrm{Dist}(X,x)\to \mathrm{Dist}(Y,y)$. Indeed, any closed nilpotent subscheme $Z_X \subset X$ maps to $Y$ via a closed nilpotent subscheme $Z_Y \subset Y$, and this induces a morphism of solid $k$-modules $\mathrm{Dist}(Z_X,x)\to \mathrm{Dist}(Z_Y,y)$ that is natural in the various closed nilpotent subschemes. 
Note that closed immersions induce monomorphisms at the level of distributions.
In particular, we see that $\mathrm{Dist}(Z,z)$ embeds into $\mathrm{Dist}(X,x)$ for any closed nilpotent subscheme $Z \subset X$ supported at $x$.
In the next lemma, we impose no representability nor finiteness condition on the given morphism.

\begin{lemma}\label{lem_dist_iso_form_etale}
	Let $f\colon (X,x)\to (Y,y)$ be a map of pointed ind-schemes. Then, $\mathrm{Dist}(f)$ is an isomorphism if and only if $f$ is formally étale at $x$.
\end{lemma}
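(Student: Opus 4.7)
The plan is to reduce the statement to Lemma \ref{lem_form_etale_local_rings} via solid $k$-duality. Unwinding the definitions, $\mathrm{Dist}(X,x)$ is the solid $k$-module dual of the ring of formal sections $\Gamma(\widehat X_x,\calO)$, and both functors depend only on the formal completion of $X$ at $x$.

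If $f$ is formally étale, then Lemma \ref{lem_form_etale_local_rings} produces an isomorphism of solid commutative $k$-algebras $\Gamma(\widehat Y_y,\calO) \xrightarrow{\sim} \Gamma(\widehat X_x,\calO)$, and dualizing yields an isomorphism $\mathrm{Dist}(X,x) \xrightarrow{\sim} \mathrm{Dist}(Y,y)$ which coincides with $\mathrm{Dist}(f)$ by naturality.

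For the converse, the key input is reflexivity in the solid framework. Each $\Gamma(Z,\calO_Z)$ is discrete and a filtered colimit of finite-dimensional $k$-subspaces, every one of which is canonically reflexive. Since solid $k$-duality interchanges filtered colimits with cofiltered limits and restricts to ordinary linear duality on finite-dimensional subspaces, one deduces $\Gamma(Z,\calO_Z)^{\vee\vee} = \Gamma(Z,\calO_Z)$, where $(-)^\vee$ denotes solid $k$-dualization. Passing to the cofiltered limit over all closed nilpotent pointed subschemes $(Z,x) \subset (X,x)$ gives a canonical natural identification
\[ \mathrm{Dist}(X,x)^\vee \;=\; \lim_Z \Gamma(Z,\calO_Z)^{\vee\vee} \;=\; \lim_Z \Gamma(Z,\calO_Z) \;=\; \Gamma(\widehat X_x,\calO), \]
under which the dual of $\mathrm{Dist}(f)$ corresponds to the pullback on formal sections. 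If $\mathrm{Dist}(f)$ is an isomorphism of solid $k$-modules, its dual is one too, so the map on formal sections is an isomorphism of solid $k$-modules; since this map is already a $k$-algebra homomorphism and the inverse of a module isomorphism is automatically a ring map, it is an isomorphism of solid commutative $k$-algebras, and Lemma \ref{lem_form_etale_local_rings} finishes the argument.

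The principal obstacle is the reflexivity statement $\Gamma(Z,\calO_Z)^{\vee\vee} = \Gamma(Z,\calO_Z)$ in the solid category; granting this, the remainder is a formal naturality check. A secondary subtlety is verifying that the identifications above are sufficiently canonical so that the dual of $\mathrm{Dist}(f)$ indeed recovers the natural pullback map on formal sections.
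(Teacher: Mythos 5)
Your proof is correct and takes essentially the same approach as the paper: the substantive ``only if'' direction is the identical solid double-duality reduction to Lemma~\ref{lem_form_etale_local_rings}. The only cosmetic difference is in the easy ``if'' direction, where you dualize the isomorphism of formal sections from Lemma~\ref{lem_form_etale_local_rings}, while the paper invokes Lemma~\ref{lem_form_etale_completion} to identify formal completions directly.
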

\begin{proof}
	First, we handle the if direction. By \Cref{lem_form_etale_completion}, formally étale maps induce an isomorphism between formal completions. The definition of $\mathrm{Dist}$ only depends on formal completions, so the result is clear.
	Now, we handle the only if direction. Note that the condensed $k$-dual of $\mathrm{Dist}(Z,z)$ equals its ring $\Gamma(Z,\calO_Z)$ of global sections. In particular, the ring of formal sections of $(X,x)$ equals the condensed $k$-dual of $\mathrm{Dist}(X,x)$. Since $\mathrm{Dist}(f)$ is an isomorphism, we conclude that $f$ induces an isomorphism of formal rings. Hence, our claim follows from \Cref{lem_form_etale_local_rings}. 
\end{proof}

We want to understand when maps of distributions are surjections in the category of solid $k$-modules. For this, we say that a map $f\colon (X,x)\to (Y,y)$ of pointed ind-schemes over $k$ is formally dominant if the scheme-theoretic image functor along $f$ preserves formal completions.
 
\begin{lemma}\label{lem_dist_surj_sch_dominant}
	Let $(X,x)\to (Y,y)$ be a formally dominant map of pointed ind-$k$-schemes. Then, $\mathrm{Dist}(f)$ is a surjective map of solid $k$-modules.
\end{lemma}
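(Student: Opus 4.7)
The plan is to exploit formal dominance to realise every nilpotent subscheme of $(Y,y)$ as a quotient of the scheme-theoretic image of some nilpotent subscheme of $(X,x)$, and then use that duality over the field $k$ converts injections of discrete vector spaces into split surjections of solid $k$-modules.

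First I would unpack the hypothesis. For any closed nilpotent pointed subscheme $(Z,x)\subset (X,x)$, let $W_Z\subset Y$ denote the scheme-theoretic image of $Z$ under $f$; it is automatically a closed nilpotent pointed subscheme at $y$. The formal dominance condition says precisely that the filtered collection of such $W_Z$ is cofinal in the system of all closed nilpotent pointed subschemes of $(Y,y)$. Passing to solid $k$-duals, this means that $\mathrm{Dist}(Y,y)$ is the filtered colimit of the solid submodules $\mathrm{Dist}(W_Z,y)$ as $Z$ varies over the filtered system of closed nilpotent pointed subschemes of $(X,x)$.

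Next I would observe that the very definition of scheme-theoretic image produces an injection $\Gamma(W_Z,\mathcal{O}_{W_Z})\hookrightarrow \Gamma(Z,\mathcal{O}_Z)$ of discrete $k$-vector spaces. Because $k$ is a field, any such injection admits a $k$-linear splitting, and hence the solid $k$-linear dual $\mathrm{Dist}(Z,x)\twoheadrightarrow \mathrm{Dist}(W_Z,y)$ is a split surjection of compact projective solid $k$-modules. This construction is functorial in $Z$ in the obvious way: if $Z'\subset Z$ then $W_{Z'}\subset W_Z$, and the corresponding square of distribution modules commutes.

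Finally I would pass to the filtered colimit in $Z$. The map $\mathrm{Dist}(f)\colon \mathrm{Dist}(X,x)\to \mathrm{Dist}(Y,y)$ is by construction the filtered colimit of the split surjections $\mathrm{Dist}(Z,x)\twoheadrightarrow \mathrm{Dist}(W_Z,y)$ produced in the previous step. Since filtered colimits are exact in the abelian category of solid $k$-modules, $\mathrm{Dist}(f)$ is again a surjection, as desired. I do not anticipate a serious obstacle here; the only subtle point is to correctly translate the cofinality built into the definition of formal dominance into a cofinality statement for the presentations of $\mathrm{Dist}(X,x)$ and $\mathrm{Dist}(Y,y)$ as filtered colimits, and to recall that duality over a field is exact so that the pointwise surjections may be assembled into a surjection after passage to the solid colimit.
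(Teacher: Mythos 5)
Your proof is correct and follows essentially the same approach as the paper's: reduce to nilpotent subschemes, observe that the scheme-theoretic image yields an injection of discrete global sections, dualize to a surjection (which you justify via a $k$-linear splitting, implicit in the paper since $k$ is a field), and pass to the filtered colimit. The only cosmetic difference is that the paper performs a ``without loss of generality'' reduction to the case where $X$ is a single nilpotent scheme and $Y$ is its scheme-theoretic image, whereas you carry the colimit explicitly throughout.
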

\begin{proof}
We may assume that $X$ is nilpotent and $Y$ is the scheme-theoretic image of $X$ along $f$ (and thus also nilpotent). At the level of formal sections, we have an inclusion $\Gamma(Y,\calO)\to \Gamma(X,\calO)$ of discrete solid $k$-modules. Upon taking $k$-duals, this turns into a surjection of solid $k$-modules. 
\end{proof}

If $f$ is a scheme-theoretic dominant map of finite type $k$-schemes, then it is also formally dominant by Chevalley's lemma, compare with \cite[Lemma 7.3]{HLR18}. However, beware that this is false as soon as we drop finiteness, as revealed by the endomorphism of the scheme $\Res_{O/k}\bbA^1_k$ given by $\sum t^iz_i\mapsto \sum t^iz_i^i$. Indeed, the scheme-theoretic images of nilpotent schemes along that endomorphism are always of finite type.

Next, we show that distribution modules are factorizable in products of ind-schemes.

\begin{lemma}\label{lem_dist_fact_flat}
	Let $(X,x)$ and $(Y,y)$ be pointed $k$-schemes. The canonical map of solid $k$-modules
	\begin{equation}
	\mathrm{Dist}(X,x) \otimes_k^\blacksquare \mathrm{Dist}(Y,y) \to \mathrm{Dist}(X\times Y, (x,y))
	\end{equation}
	is an isomorphism.
\end{lemma}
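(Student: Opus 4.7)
The plan is to reduce everything to filtered (co)limits of finite-dimensional linear duality, exploiting that $k$ is a finite field so finitely generated $k$-modules are finite.

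First, I would show that the filtered system indexing $\mathrm{Dist}(X\times Y,(x,y))$ can be cofinally replaced by pairs $(Z,W)$ with $(Z,x)\subset (X,x)$ and $(W,y)\subset (Y,y)$ nilpotent closed subschemes. The key observation is that for any nilpotent $(V,(x,y))\subset (X\times Y,(x,y))$, each of its projections to a constituent scheme of $X$ or $Y$ admits a scheme-theoretic image which is still nilpotent. Indeed, the comorphism $\Gamma(Z,\calO_Z)\hookrightarrow \Gamma(V,\calO_V)$ is injective, so the maximal ideal of $x$ in $Z$ is nilpotent because it embeds into the nilpotent maximal ideal of $(x,y)$ in $V$. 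The resulting closed subscheme $Z$ lives in a scheme constituent of $X$, hence is closed in $X$; similarly for $W$. Thus $V\subset Z\times W$, so the pairs $(Z,W)$ are cofinal.

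Second, I would reduce to the case where $X=Z$ and $Y=W$ are themselves nilpotent $k$-schemes. Since the solid tensor product commutes with filtered colimits in each variable, the left-hand side equals the filtered colimit over $(Z,W)$ of $\mathrm{Hom}_k(\Gamma(Z,\calO),k)\otimes_k^\blacksquare \mathrm{Hom}_k(\Gamma(W,\calO),k)$, matching the new indexing of the right-hand side. So the claim reduces to showing, for nilpotent $Z,W$, that the natural map
\[
\mathrm{Hom}_k(\Gamma(Z,\calO),k)\otimes_k^\blacksquare \mathrm{Hom}_k(\Gamma(W,\calO),k)\longrightarrow \mathrm{Hom}_k(\Gamma(Z,\calO)\otimes_k \Gamma(W,\calO),k)
\]
is an isomorphism, where we used the affineness of $Z$ and $W$ to identify $\Gamma(Z\times W,\calO)=\Gamma(Z,\calO)\otimes_k\Gamma(W,\calO)$.

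Third, I would prove this duality statement by writing $\Gamma(Z,\calO)=\on{colim}_i A_i$ and $\Gamma(W,\calO)=\on{colim}_j B_j$ as filtered colimits of their finitely generated $k$-submodules. Since $k$ is a finite field, every $A_i$ and $B_j$ is a finite set, so their $k$-duals are compact projective solid $k$-modules and ordinary finite-dimensional duality yields $A_i^\vee\otimes_k^\blacksquare B_j^\vee=(A_i\otimes_k B_j)^\vee$ on the nose. Passing to the cofiltered limit in $(i,j)$ on both sides then gives the desired isomorphism, since $\otimes_k^\blacksquare$ preserves cofiltered limits of compact projective solid $k$-modules by \cite[Proposition 7.5, Theorem 8.1]{CS19}.

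The main obstacle is the bookkeeping in the first step: checking that the scheme-theoretic image of a nilpotent subscheme of a product is still nilpotent, and that taking these images provides a cofinal subsystem of the indexing category. Once that is secured, the remaining arguments are formal manipulations of solid modules and elementary duality, so I would not expect any genuine difficulty in steps two and three.
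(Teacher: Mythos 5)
Your proof is correct and follows essentially the same route as the paper's: both reduce to the nilpotent case by observing that products $Z_X\times_k Z_Y$ of nilpotent thickenings are cofinal among nilpotent thickenings of $(x,y)$, then invoke compatibility of the solid tensor product with colimits and finite-dimensional $k$-linear duality. You simply spell out two points the paper leaves implicit — the verification of cofinality via scheme-theoretic images of the projections, and the passage from finite submodules to cofiltered limits of compact projective solid $k$-modules — but the underlying argument is the same.
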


\begin{proof}
	Notice that if we let $Z_X$, resp.~$Z_Y$, run over all nilpotent thickenings of $x$ at $X$, resp.~$y$ at $Y$, then the colimit of the closed nilpotent subschemes $Z_X \times_k Z_Y$ recovers the formal completion of $X\times_kY$ at $(x,y)$. In particular, we may assume that $X$ and $Y$ are themselves nilpotent, as the solid tensor product preserves colimits. But since $k$ is a field, taking $k$-module duals commutes with tensor products, and it is easy to check that the isomorphism respects the solid structure.
\end{proof}

Note that $\mathrm{Dist}$ is a covariant functor, so the diagonal $\Delta\colon X\to X\times X$ yields a comultiplication map
\begin{equation}
	\mu:=\Delta_\ast\colon \mathrm{Dist}(X,x) \to \mathrm{Dist}(X\times X, (x,x)) \simeq \mathrm{Dist}(X,x) \otimes_k^\blacksquare \mathrm{Dist}(X,x)
\end{equation}
making the distribution $k$-module into a cocommutative solid $k$-coalgebra, with counit $\epsilon \colon k=\mathrm{Dist}(\ast) \to \mathrm{Dist}(X,x)$ induced by $x$. Assume $X=G$ is a $k$-group ind-scheme. 
We define its distribution $k$-algebra $\mathrm{Dist}(G):=\mathrm{Dist}(G,1)$ as the solid $k$-module of distributions based at the origin. The multiplication map $m\colon G\times G \to G$ induces a multiplication map
\begin{equation}
	m:=m_\ast \colon \mathrm{Dist}(G\times G)\simeq \mathrm{Dist}(G)\otimes_k^\blacksquare \mathrm{Dist}(G) \to \mathrm{Dist}(G)
\end{equation}
making the distribution $k$-module into a cocommutative associative solid Hopf $k$-algebra, with antipode induced by the inverse map of $G$. Beware that this associative algebra ought to be as commutative as $G$ itself, and hence it is very rarely so.
\subsection{A Serre presentation}

Let $G$ be a connected reductive $F$-group and assume it is residually split in the sense of \cite[Definition 9.10.11]{KP23} throughout this subsection. By \cite[Proposition 9.10.12]{KP23}, $G$ is also quasi-split. In particular, we can fix a pinning in the sense of \cite[Section 4.1]{BT84}, i.e., the data consisting of a maximally split maximal $F$-torus $T$ of $G$, a Borel $F$-subgroup $B\subset G$, and certain isomorphisms $x_\alpha^{-1}$ between $U_\alpha$ and certain explicit groups that we describe below. 

Our results in this section consist in giving a solid associative $k$-algebra presentation for $\mathrm{Dist}(\Res_{F/k}G)$ in terms of $\mathrm{Dist}(\Res_{F/k}U^\pm)$. We have a notion of coproduct in the category of solid associative $k$-algebras by taking the usual construction in condensed associative $k$-algebras and then solidifying it.

\begin{proposition}\label{prop_surj_dist_pm_simple_full}
	If $G$ is simply connected and residually split, the natural map \begin{equation}\label{eq_gen_pmsimple_full}\ast^\blacksquare_{\alpha \in \pm\Delta}\mathrm{Dist}(\Res_{F/k}U_{\alpha})\to \mathrm{Dist}(\Res_{F/k}G)\end{equation} of solid associative $k$-algebras is an epimorphism.
\end{proposition}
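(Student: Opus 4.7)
My plan is to use the Bruhat big cell to reduce the proposition to its constituent root-group pieces, then to generate the non-simple root-group and torus factors from the simple ones via Chevalley relations and rank-one $\SL_2$-subgroups.

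First, I would exploit the big cell $\calC := U^-\times T\times U^+$, which by \cite[Théorème 3.8.1]{BT84} is Zariski open around the identity of $G$, with multiplication an $F$-isomorphism onto its image. Since $\calC\hookrightarrow G$ is an open immersion, hence formally étale, applying \Cref{lem_res_loop} produces a formally étale map $\Res_{F/k}\calC\to \Res_{F/k}G$ of pointed ind-$k$-schemes. Combining \Cref{lem_dist_iso_form_etale} with the factorization \Cref{lem_dist_fact_flat}, I obtain a natural isomorphism of solid $k$-modules
\begin{equation}
\mathrm{Dist}(\Res_{F/k}U^-)\otimes_k^\blacksquare \mathrm{Dist}(\Res_{F/k}T)\otimes_k^\blacksquare \mathrm{Dist}(\Res_{F/k}U^+)\xrightarrow{\sim} \mathrm{Dist}(\Res_{F/k}G),
\end{equation}
induced by the multiplication law of $G$. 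In particular, $\mathrm{Dist}(\Res_{F/k}G)$ is the solid associative $k$-subalgebra generated by distributions of $\Res_{F/k}T$ and of $\Res_{F/k}U^\pm$, so it suffices to express these inside the algebra generated by the simple root-group distributions.

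Second, since $G$ is quasi-split, $U^+$ admits an $F$-filtration by smooth normal subgroup schemes with successive quotients the relative positive root groups $U_\beta$, ordered by height, and analogously for $U^-$. An inductive application of the same formally étale argument from the big cell to each term of this filtration shows that $\mathrm{Dist}(\Res_{F/k}U^\pm)$ is the solid associative algebra generated by $\mathrm{Dist}(\Res_{F/k}U_\beta)$ for $\beta$ positive, resp.~negative. The Steinberg commutation relations $[U_\alpha,U_\beta]\subset \prod_{i,j>0}U_{i\alpha+j\beta}$ then allow me to rewrite, by induction on height, each $\mathrm{Dist}(\Res_{F/k}U_\gamma)$ for a non-simple $\gamma$ as a commutator expression in distributions of simple root groups and lower-height root groups. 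Hence both $\mathrm{Dist}(\Res_{F/k}U^+)$ and $\mathrm{Dist}(\Res_{F/k}U^-)$ lie in the solid subalgebra generated by the simple root distributions $\mathrm{Dist}(\Res_{F/k}U_{\pm\alpha})$, $\alpha\in\Delta$.

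Third, and crucially using simply connectedness, I would capture $\mathrm{Dist}(\Res_{F/k}T)$. Because $G_\spc$ is simply connected and quasi-split, the simple coroots span $X_*(T_{\bar F})$ in a Galois-equivariant way, so $T$ decomposes as the product of induced subtori $T_\alpha \simeq \Res_{F_\alpha/F}\Gm$ attached to the Galois orbits of simple roots, each $T_\alpha$ being the image of the standard torus in the rank-one subgroup $G_\alpha\subset G$ generated by $U_{\pm\alpha}$ (of type $\Res_{F_\alpha/F}\SL_2$ or $\Res_{F_\alpha/F}\SU_3$). A factorization of the product torus via the big cell reduces me to the rank-one case: in $G_\alpha$, I need that $\mathrm{Dist}(\Res_{F/k}T_\alpha)$ already lies in the algebra generated by $\mathrm{Dist}(\Res_{F/k}U_{\pm\alpha})$, which I would verify using the rank-one big-cell factorization together with explicit Chevalley-style identities in $\SL_2$ (and Heisenberg-style ones in $\SU_3$), bringing commutators $[U_\alpha,U_{-\alpha}]$ into $T_\alpha$.

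The main obstacle I anticipate is precisely the torus step in the non-split case, especially when $G_\alpha$ is of unitary type in characteristic $2$: then $U_{\pm\alpha}$ is not a vector group but has a Heisenberg-like structure, and one must verify that the coroot cocharacter $\alpha^\vee$ really lifts to a commutator relation realising $\mathrm{Dist}(\Res_{F/k}T_\alpha)$ inside the subalgebra generated by $U_{\pm\alpha}$. Keeping the solid topology compatible with the unbounded tensor factorizations is a secondary subtlety, but should be automatic from the solid formalism of \cite{CS19}.
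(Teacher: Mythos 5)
Your proposal follows the same route as the paper: big-cell factorization into $\mathrm{Dist}(\Res_{F/k}U^-)\otimes^\blacksquare\mathrm{Dist}(\Res_{F/k}T)\otimes^\blacksquare\mathrm{Dist}(\Res_{F/k}U^+)$, generation of the unipotent parts from the simple root groups by Chevalley commutator relations and induction on height, and capture of each $T_\alpha$ from $U_{\pm\alpha}$ using the rank-one exchange relation (which you describe as a commutator, though it is really the $x_\alpha(z_1)x_{-\alpha}(z_2)=x_{-\alpha}(\cdot)\alpha^\vee(1+g)x_\alpha(\cdot)$ identity with the coroot factor isolated). The paper's \Cref{lem_gen_unip} and \Cref{lem_surj_dist_unip_rank1} carry out exactly these steps by checking formal dominance (\Cref{lem_dist_surj_sch_dominant}) of the explicit commutator and exchange maps from \cite{BT84}, with a small wrinkle you gloss over: for $G_2$ and ${}^{3,6}D_4$ the full commutator map is not dominant for dimension reasons and one must peel off the non-simple root groups one at a time, and for ${}^2A_4$ one uses the quadratic Heisenberg commutator, and the ${}^2A_3$/$\mathrm{SU}_3$ torus case in characteristic~$2$ that you flag is indeed where the explicit formula for $g$ must be examined.
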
	

\begin{proof}
	This will be proved along several computational lemmas that appear below. First, we note that we have a decomposition
	\begin{equation}\label{eq_fact_big_cell}
		\mathrm{Dist}(\Res_{F/k}U^-)\otimes_k^\blacksquare \mathrm{Dist}(\Res_{F/k}T) \otimes_k^\blacksquare \mathrm{Dist}(\Res_{F/k}U^+)=\mathrm{Dist}(\Res_{F/k}G)
	\end{equation}
by combining formal étaleness with \Cref{lem_dist_fact_flat}. In order to handle the unipotent parts of the distribution algebra, it is enough to show that 
\begin{equation}\label{eq_surj_unip_simple}
	\ast^\blacksquare_{\alpha \in \Delta}\mathrm{Dist}(\Res_{F/k}U_{\alpha})\to \mathrm{Dist}(\Res_{F/k}U^+)
\end{equation}
is a surjection of solid associative $k$-algebras, which will be done in \Cref{lem_gen_unip}. Finally, to handle the torus we observe that there is also a decomposition
\begin{equation}\label{eq_fact_torus}
	\mathrm{Dist}(\Res_{F/k}T)=\otimes_{\alpha \in \Delta}^\blacksquare\mathrm{Dist}(\Res_{F/k}T_\alpha)
\end{equation}
in the category of solid $k$-modules by \Cref{lem_dist_fact_flat}. Here, $T_\alpha$ denotes the intersection of $T$ with the subgroup $G_\alpha$ generated by $U_{\pm \alpha}$. In particular, this reduces our statement to the rank $1$ case, which is handled in \Cref{lem_surj_dist_unip_rank1}.
\end{proof}

Below, we perform the explicit computations required to verify the claims used in \Cref{prop_surj_dist_pm_simple_full}. For this, let us recall that are fixing the data of certain compatible isomorphisms as follows. Assume first that $2\alpha$ is not a root, and the pinning is of the form $x_\alpha \colon \Res_{F_\alpha/F}\bbG_{\mathrm{a},F_\alpha} \to U_\alpha$, where $F_\alpha/F$ is a separable field extension that is unique up to conjugation under $\mathrm{Gal}_F$. If $2\alpha$ is a root, then the pinning takes the form $x_\alpha \colon \Res_{F_{2\alpha}/F} \bbG_{\mathrm{p},F_\alpha/F_{2\alpha}}$, where $F_\alpha/F$ is a separable field extension, and $F_\alpha/F_{2\alpha}$ is a quadratic field extension. Here, the group $\bbG_{\mathrm{p},F_\alpha/F_{2\alpha}}$ is a three-dimensional $F_{2\alpha}$-group described explicitly in \cite[4.1.15]{BT84} whose rational points are pairs in $(F_\alpha)^2$ with trace-zero second coordinate.

\begin{lemma}\label{lem_gen_unip}
	The map \eqref{eq_surj_unip_simple} is a surjection of solid associative $k$-algebras.
\end{lemma}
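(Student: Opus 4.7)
My plan is to combine a PBW-style factorization of $\Res_{F/k}U^+$ with the commutator relations among root groups, proceeding by induction on the height of positive roots.

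First I would enumerate the positive relative roots $\beta_1,\ldots,\beta_N$ in non-decreasing height and invoke the standard direct spanning $\prod_i U_{\beta_i} \xrightarrow{\sim} U^+$, valid for the unipotent radical of a Borel in any quasi-split reductive group. Applying $\Res_{F/k}$ (which preserves finite products) and iterating \Cref{lem_dist_fact_flat} then yields a solid $k$-module isomorphism
\[
\mathrm{Dist}(\Res_{F/k}U_{\beta_1}) \otimes_k^\blacksquare \cdots \otimes_k^\blacksquare \mathrm{Dist}(\Res_{F/k}U_{\beta_N}) \xrightarrow{\sim} \mathrm{Dist}(\Res_{F/k}U^+).
\]
Consequently, it suffices to prove that the solid subalgebra $\calA \subset \mathrm{Dist}(\Res_{F/k}G)$ generated by $\mathrm{Dist}(\Res_{F/k}U_\alpha)$ for $\alpha \in \Delta$ contains $\mathrm{Dist}(\Res_{F/k}U_\beta)$ for every positive root $\beta$.

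I would argue this by strong induction on the height of $\beta$. The base case $\mathrm{ht}(\beta) = 1$ is immediate. For the inductive step, I would choose a decomposition $\beta = i_0 \alpha + j_0 \gamma$ with $\alpha \in \Delta$, $\gamma$ positive of height strictly smaller than $\mathrm{ht}(\beta)$, and such that the associated Chevalley structure constant $c_{i_0 j_0}^{(\beta;\alpha,\gamma)}$ is nonzero in $k$. Then $\mathrm{Dist}(\Res_{F/k}U_\gamma) \subset \calA$ by the inductive hypothesis. Reading the Bruhat--Tits commutator relation \cite[4.1.19]{BT84} modulo $U^+_{>\mathrm{ht}(\beta)}$ and passing through $\Res_{F/k}$ gives a morphism from $\Res_{F/k}(U_\alpha \times U_\gamma)$ to the product of $\Res_{F/k}U_{\beta'}$ over positive roots $\beta'$ of height $\leq \mathrm{ht}(\beta)$, whose $U_\beta$-component is formally dominant because the polynomial $(a,b) \mapsto c_{i_0 j_0} a^{i_0}b^{j_0} + \cdots$ is nontrivial. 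Combining with the inductive hypothesis for all $\beta'$ of height strictly less than $\mathrm{ht}(\beta)$ appearing on the right, together with \Cref{lem_dist_surj_sch_dominant}, we conclude $\mathrm{Dist}(\Res_{F/k}U_\beta) \subset \calA$.

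The hard part will be verifying the existence of such a \emph{good} decomposition $\beta = i_0\alpha + j_0\gamma$ with nonvanishing structure constant in every characteristic. For simply-laced types all leading $(1,1)$-constants are $\pm 1$ and any decomposition suffices. For non-simply-laced types in small characteristic, the $(1,1)$-term may vanish, but a decomposition with $(i_0,j_0) \neq (1,1)$ — exploiting the Frobenius-type formal dominance of $a \mapsto a^{i_0}$ over any field — always works for simply connected $G$; this amounts to a case analysis against the classification of Chevalley structure constants, where simple connectedness rules out the obstructing vanishings. In the residually split quasi-split setting, Weil restriction along the separable pinning extensions $F_\alpha/F$ preserves formal dominance, and the three-dimensional root group $\bbG_{\mathrm{p},F_\alpha/F_{2\alpha}}$ only enters at a simple root, contributing no additional induction step.
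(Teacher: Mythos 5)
Your high-level strategy — PBW factorization of $\mathrm{Dist}(\Res_{F/k}U^+)$ over positive roots, then height induction via the commutator relations — is the same as the paper's, but the middle of your induction has a genuine gap, and it is exactly the hard part.

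The commutator $[x_\alpha(z_1), x_\gamma(z_2)]$ lands in a \emph{product} of root groups $U_{\beta'}$ for $\beta' = i\alpha + j\gamma$ with $i,j \geq 1$. If $\beta$ is the lowest-height root appearing (say $\beta = \alpha+\gamma$), the other $\beta'$ all have height \emph{strictly greater} than $\mathrm{ht}(\beta)$, so they are not covered by your inductive hypothesis. Your fix — reading the relation ``modulo $U^+_{> \mathrm{ht}(\beta)}$'' — is not legitimate as stated: you need to show $\mathrm{Dist}(\Res_{F/k}U_\beta)$ lies inside a subalgebra $\calA \subset \mathrm{Dist}(\Res_{F/k}G)$, and passing to a quotient group $U^+/U^+_{>\mathrm{ht}(\beta)}$ changes the ambient algebra; surjectivity onto distributions of the quotient does not give containment in $\calA$. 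The assertion ``the $U_\beta$-component is formally dominant'' is also not, by itself, enough: \Cref{lem_dist_surj_sch_dominant} applies to the full commutator map into the product $\prod U_{\beta'}$, and for $G_2$-type systems that map is \emph{not} formally dominant for dimension reasons (a $2$-variable source mapping into a $4$-dimensional target). The paper gets around this precisely at this point, by observing that the leading polynomial $z_1z_2$ is algebraically independent from the remaining coordinates of the commutator, so the formal completion of $U_{\alpha+\beta}$ is still in the scheme-theoretic image; one then transports $x_{\alpha+\beta}(z_1z_2)$ to the other side and peels off the remaining root groups one at a time. An analogous peeling is needed for ${}^2A_4$. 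You do not address this.

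Two further points. First, the paper's cleaner route is to reduce to the rank-$2$ simply connected cases ($A_2$, $C_2$, ${}^2A_3$, ${}^2A_4$, ${}^{3,6}D_4$, $G_2$) where the Bruhat--Tits commutator formulas can be written explicitly, rather than to argue via abstract Chevalley constants; this is essential because in the non-reduced quasi-split cases the ``constants'' involve norms and traces, not integers. Second, your plan for small-characteristic structure constants is deferred to an unexecuted case analysis; the paper instead relies on the explicit formulas together with the observation that $z \mapsto N(z)$ (e.g.\ $z\mapsto z^p$) is still formally dominant, which is where the small-characteristic worry actually dissolves. Until you fill in the peeling argument and the explicit rank-$2$ check, the proof is not complete.
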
	
\begin{proof}
	Observe that $\mathrm{Dist}(U^+)$ is the product of the $\mathrm{Dist}(U_\alpha)$ as $\alpha$ runs over all positive roots by \Cref{lem_dist_fact_flat}. This statement is proved by induction on the height of a root, by exploiting the commutator relations inside $U^+$ explicitly written down in \cite[Addendum]{BT84}. It is then clear that we are reduced to proving the statement for almost simple simply connected groups of rank $2$ with $\Delta=\{\alpha,\beta\}$, an exhaustive list being given by restrictions of scalars of groups of type $A_2$, $C_2$, ${}^2A_3$, ${}^2A_4$, $^{3,6}D_4$, and $G_2$.
	
	First, if $G$ is of type $A_2$, then $\Phi^+$ contains exactly three roots $\alpha, \beta, \alpha +\beta$, then we get the commutator formula
	\begin{equation}
		[x_\alpha(z_1),x_\beta(z_2)]=x_{\alpha+\beta}(z_1z_2 ).
	\end{equation}
	It is clear that this map is formally dominant, because it has bounded coefficients and hits a topological $k$-basis of $F$. In particular, $\mathrm{Dist}(\Res_{F/k}U_{\alpha+\beta})$ is contained in the image of \eqref{eq_surj_unip_simple} by \Cref{lem_dist_surj_sch_dominant}.
	
	If instead $G$ is of type $C_2$ or ${}^2A_3$, then $\Phi^+$ contains exactly four roots of the form $\alpha, \beta, \alpha+\beta, 2\alpha+\beta$, and we get
	\begin{equation}
		[x_\alpha(z_{1}),x_\beta(z_2)]=x_{\alpha+\beta}(z_{1}z_2)x_{2\alpha+\beta}(N(z_1)z_2)
	\end{equation} where $N(z_1)=z_1^2$ or the norm of the quadratic extension in the non-split case, and the variables are understood to be formal. Again, one can check easily by this expression that the commutator map is formally dominant. In particular, we deduce by \Cref{lem_dist_surj_sch_dominant} that $\mathrm{Dist}(\Res_{F/k}U_{\alpha+\beta})$ and $\mathrm{Dist}(\Res_{F/k}U_{2\alpha+\beta})$ both lie in the image of \eqref{eq_surj_unip_simple}. 
	
	If $G$ is of type $^{3,6}D_4$ or $G_2$, then $\Phi^+=\{ \alpha, \beta, \alpha+\beta, 2\alpha+\beta, 3\alpha+\beta, 3\alpha+2\beta\}$, and we get
	\begin{equation}
		[x_\alpha(z_1),x_\beta(z_2)]=  x_{\alpha+\beta}(z_1z_2)x_{2\alpha+\beta}(\theta(z_1)z_2) x_{3\alpha+\beta}(N(z_1)z_2)x_{3\alpha+2\beta}(N(z_1)z_2^2),
	\end{equation}
	where $N(z_1)=z_1^3$ in the split case and is the usual norm of the fixed cubic extension in the non-split case, $\theta(z_1)z_1=N(z_1)$, and the variables are understood to be formal.
	In this case, the commutator is not dominant for dimension reasons. However, one can observe that the formal completion of $\Res_{F/k}U_{\alpha+\beta}$ is contained in the scheme-theoretic image of the above map, because $z_1z_2$ is algebraically idenpendent from the remaining polynomials. In particular, $\mathrm{Dist}(\Res_{F/k}U_{\alpha+\beta})$ is contained in the image of \eqref{eq_surj_unip_simple}. This means we can transport $x_\alpha(z_1z_2)$ to the left side of the equation, and we are reduced to the remaining three root groups. We continue this procedure first by showing containment of $\mathrm{Dist}(\Res_{F/k}U_{3\alpha+2\beta})$, then of $\mathrm{Dist}(\Res_{F/k}U_{2\alpha+\beta})$, and finally of $\mathrm{Dist}(\Res_{F/k}U_{3\alpha+\beta})$.
	
	Finally, we consider the case where $G$ is of type $^2A_4$. It follows that $\Phi^+=\{ \alpha, \beta, \alpha+\beta, 2\alpha, 2\alpha+\beta, 2\alpha+2\beta\}$. Therefore, we get
	\begin{equation}
		[x_{\alpha}(z_1,z_2),x_\beta(z_3)]=x_{\alpha+\beta}(\sigma(z_1z_3),N(z_1)z_2)x_{2\alpha+\beta}(z_3N(z_1)+z_3z_2)
	\end{equation}
	where $N$ denotes the norm of $F_\alpha/F_{2\alpha}$ and $\sigma$ the non-trivial involution. We can see that the formal completion of $\Res_{F/k}U_{\alpha+\beta}$ $\mathrm{Dist}(U_\Delta)$ is contained in the scheme-theoretic image of the above map. As above, this is enough to show that $\mathrm{Dist}(\Res_{F/k}U_{\alpha+\beta})$ and $\mathrm{Dist}(\Res_{F/k}U_{2\alpha+\beta})$ both lie in the image of \eqref{eq_surj_unip_simple}.
\end{proof}

In the next lemma, we handle the rank $1$ case. For this, we recall that there is a natural isomorphism $\alpha^\vee: \Res_{F_\alpha/F}\bbG_{\mathrm{m},F_\alpha} \to T_\alpha$, where the right side equals the intersection of $T$ with the subgroup $G_\alpha$ generated by $U_{\pm \alpha}$.

\begin{lemma}\label{lem_surj_dist_unip_rank1}
	Assume $G$ is simply connected, residually split and has rank $1$. Then, \eqref{eq_gen_pmsimple_full} is a surjection of solid associative $k$-algebras.
\end{lemma}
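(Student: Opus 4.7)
The plan is to reduce surjectivity to a statement about the torus via the big cell decomposition, and then recover the torus distributions via Chevalley--Kostant style commutator identities. Concretely, since $G$ is residually split of rank $1$, it is quasi-split by \cite[Proposition 9.10.12]{KP23}, and the multiplication map $U_{-\alpha}\times T_\alpha \times U_\alpha \to G$ is an open immersion with the identity in its image. Applying $\Res_{F/k}$ preserves this formally at the identity by \Cref{lem_res_loop}, and then \Cref{lem_dist_fact_flat} gives the decomposition
\[
\mathrm{Dist}(\Res_{F/k}G) \cong \mathrm{Dist}(\Res_{F/k}U_{-\alpha}) \otimes_k^\blacksquare \mathrm{Dist}(\Res_{F/k}T_\alpha) \otimes_k^\blacksquare \mathrm{Dist}(\Res_{F/k}U_\alpha).
\]
Thus surjectivity of \eqref{eq_gen_pmsimple_full} reduces to showing that $\mathrm{Dist}(\Res_{F/k}T_\alpha)$ sits inside the image of the coproduct map.

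When $2\alpha$ is not a root we have $G = \Res_{F_\alpha/F}\mathrm{SL}_{2,F_\alpha}$, and I would exploit the Chevalley identity $[e,f]=h$ lifted to distributions, namely
\[
m_\ast(e_c \otimes f_d) - m_\ast(f_d \otimes e_c) = h_{cd} \in \mathrm{Dist}(\Res_{F/k}T_\alpha),
\]
where $e_c,f_d$ are the first-order distributions produced by the formal arcs $z\mapsto x_{\pm\alpha}(cz)$ and $h_{cd}$ is the tangent vector to $\alpha^\vee$ at parameter $cd\in F_\alpha$. Letting $(c,d)$ vary exhausts the tangent space $T_1 \Res_{F/k}T_\alpha \cong F_\alpha$. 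Note that by itself the Gauss decomposition $x_\alpha(a)x_{-\alpha}(b) = x_{-\alpha}(b(1+ab)^{-1})\alpha^\vee(1+ab)x_\alpha(a(1+ab)^{-1})$ is not enough, because the map $(a,b)\mapsto 1+ab$ has vanishing differential at the origin and thus fails to be formally dominant onto $T_\alpha$; the commutator in the associative algebra above is what rescues us. Higher-order generators of $\mathrm{Dist}(\Res_{F/k}T_\alpha)$ are then extracted via Kostant's integral-form identity
\[
e^{(m)}f^{(n)} = \sum_{j=0}^{\min(m,n)} f^{(n-j)} \binom{h-m-n+2j}{j} e^{(m-j)},
\]
applied parametrically in $c,d\in F_\alpha$, which expresses each divided-power torus distribution $\binom{h_c}{n}$ as a finite word in the root-group distributions $e_{c'}^{(i)},f_{d'}^{(j)}$. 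Topological generation in the sense of solid $k$-algebras then follows in the same spirit as \Cref{lem_gen_unip}.

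The main obstacle is the case when $2\alpha$ is a root, where $G = \Res_{F_{2\alpha}/F}\mathrm{SU}_3$ and the root groups $U_{\pm\alpha}\simeq \Res_{F_{2\alpha}/F}\bbG_{\mathrm{p},F_\alpha/F_{2\alpha}}$ are non-abelian three-dimensional, containing $U_{\pm 2\alpha}$ centrally. The same overall strategy should apply, but the Chevalley--Kostant identities acquire central correction terms supported on the $U_{\pm 2\alpha}$-components, which must be read off from the explicit commutator tables in \cite[4.1.15]{BT84} and \cite[Addendum]{BT84}. The intricacy lies in tracking these corrections to verify that every element of $\mathrm{Dist}(\Res_{F/k}T_\alpha)$ is still expressible as a word in the coproduct image; modulo this bookkeeping, the argument is structurally identical to the $\mathrm{SL}_2$ case above.
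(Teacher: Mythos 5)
Your central objection to the Gauss-decomposition approach — that ``the map $(a,b)\mapsto 1+ab$ has vanishing differential at the origin and thus fails to be formally dominant onto $T_\alpha$'' — is mistaken, and this error undercuts the motivation for your alternative route. Formal dominance, as defined in the paper, concerns scheme-theoretic images of \emph{arbitrary} closed nilpotent pointed subschemes, not just square-zero ones; a map can be formally dominant even when its differential vanishes. Concretely, pulling back the coordinate along $(z_1,z_2)\mapsto z_1z_2$ restricted to $\Spec k[\epsilon_1,\epsilon_2]/(\epsilon_1^n,\epsilon_2^n)$ gives $\epsilon_1\epsilon_2$, whose image is $k[y]/(y^n)$; letting $n\to\infty$ (and running over the loop directions, as in the proof of \Cref{lem_gen_unip}) exhausts the formal completion of $\Res_{F/k}T_\alpha$. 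This is precisely how the paper argues: it isolates the torus factor $\alpha^\vee(1+z_1z_2)$ (resp.\ $\alpha^\vee(1+g)$) in equation \eqref{funct_eq_sl2} (resp.\ \eqref{funct_eq_su3}), observes formal dominance directly, and invokes \Cref{lem_dist_surj_sch_dominant} to land $\mathrm{Dist}(\Res_{F/k}T_\alpha)$ in the image.

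Beyond this, your Chevalley--Kostant route is a legitimate alternative in spirit — it is closer to Takeuchi's original presentation of hyperalgebras — but as written it is incomplete exactly where the difficulty lies. You correctly identify that the $\mathrm{SU}_3$ case (when $2\alpha$ is a root) requires tracking central correction terms in the non-abelian root group $\bbG_{\mathrm{p},F_\alpha/F_{2\alpha}}$, but you then defer this to unperformed ``bookkeeping.'' Since the $\mathrm{SU}_3$ case is precisely the one that has resisted previous treatments (and is the raison d'\^etre of this paper), a proof that only verifies $\mathrm{SL}_2$ and waves at $\mathrm{SU}_3$ has a genuine gap. You would also need to justify that Kostant's divided-power identity, stated over $\bbZ$, descends to the solid $k$-algebra of \emph{loop} distributions with the correct topological generation; this is not automatic and is an extra burden your route carries that the formal-dominance argument avoids.
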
	
\begin{proof}
	Let $\alpha$ be the positive simple root of $G$. First, we assume that $G=\Res_{F_\alpha/F}\mathrm{SL}_{2,F_\alpha}$. Inside $\Res_{F/k}G$, we have the following equation
	\begin{equation}\label{funct_eq_sl2}
		x_\alpha(z_1)x_{-\alpha}(z_2)=x_{-\alpha}\big(\frac{z_2}{1+z_1z_2}\big)\alpha^\vee(1+z_1z_2)x_\alpha\big(\frac{z_1}{1+z_1z_2}\big)
	\end{equation}
where the variables are understood to be formal.
If we isolate the term $\alpha^\vee(1+z_1z_2)$ on the right side, we see that the map is formally dominant. This implies by \Cref{lem_dist_surj_sch_dominant} that $\mathrm{Dist}(\Res_{F/k}T)$ is contained in the image of \eqref{eq_gen_pmsimple_full}. By the big cell factorization, this implies the desired surjectivity in the split case. 

	
	
	Next, we handle the rank $1$ quasi-split group $G=\Res_{F_{2\alpha}/F}\mathrm{SU}_{3,F_\alpha/F_{2\alpha}}$ where $F_\alpha/F$ is a separable field extension and $F_\alpha/F_{2\alpha}$ is quadratic. 
	In $\Res_{F/k}G$, we have the following equality
	\begin{equation}\label{funct_eq_su3}
		x_\alpha(z_1,z_2)x_{-\alpha}(z_3,z_4)=x_{-\alpha}(f_1,f_2)\alpha^\vee(1+g)x_{\alpha}(f_3,f_4)
	\end{equation}
where the $f_i$ are explicit rational functions on the $z_i$ involving the quadratic involution $\sigma$, which we omit for simplicity, and \begin{equation}
		g={-\sigma(z_1)z_3+(z_2+\lambda N(z_1))(z_4+\lambda N(z_3))},
	\end{equation} 
compare with \cite[4.1.12]{BT84}.
The map resulting from isolating $\alpha^\vee(g)$ in the right side can be checked to be formally dominant, so we see by \Cref{lem_dist_surj_sch_dominant} that $\mathrm{Dist}(\Res_{F/k}T)$ is contained in the image of \eqref{eq_gen_pmsimple_full}, implying that this is a surjection.
\end{proof}

Now, we have all the necessary tools at our disposal to establish a Serre presentation for the loop distribution algebra $\mathrm{Dist}(\Res_{F/k}G)$ of a simply connected $F$-group $G$, inspired by \cite[Proposition 3.6]{Tak83a} and \cite[Theorem 5.1]{Tak83b}. For this, we need to introduce the following notation: given $\alpha \in \Delta$, a positive simple root, we let $G_{\pm \alpha}$ be the derived subgroup generated by $U_{\pm \alpha}$, then we let $V_{\pm \alpha} \subset U^\pm$ be the unique smooth connected unipotent subgroup such that $U^\pm =U_{\pm \alpha}\times V_{\pm \alpha}$, and finally we denote by $Q_{\pm\alpha}$ the semi-direct product $G_{\pm\alpha}\ltimes U_{\pm \alpha}$. Let us emphasize that $Q_{\pm \alpha}$ is a kind of derived parabolic subgroup. Indeed, it equals the extension of the derived subgroup $G_{\pm \alpha}$ of the standard Levi of the minimal parahoric $P_{\pm \alpha}$ by its unipotent radical $U_{\pm \alpha}$.

\begin{theorem}\label{thm_serre_pres_loop}
	If $G$ is simply connected and residually split, the kernel of the surjection
	\begin{equation}\label{eq_unip_gen_full_gp}
		\mathrm{Dist}(\Res_{F/k}U^+)\ast^\blacksquare \mathrm{Dist}(\Res_{F/k}U^-)\to \mathrm{Dist}(\Res_{F/k}G)
	\end{equation}
of solid associative $k$-algebras is the solid ideal generated by the kernels of
\begin{equation}
	\mathrm{Dist}(\Res_{F/k}U^{\pm})\ast^\blacksquare \mathrm{Dist}(\Res_{F/k}U_{\mp\alpha})\to \mathrm{Dist}(\Res_{F/k}Q_{\pm\alpha}),
\end{equation}
as $\alpha\in \Delta$ runs through all positive simple roots.
\end{theorem}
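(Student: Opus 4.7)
The plan is to construct a section to the surjection $A/I \twoheadrightarrow \mathrm{Dist}(\Res_{F/k}G)$, where $A$ is the coproduct in \eqref{eq_unip_gen_full_gp} and $I$ is the solid two-sided ideal generated by the rank $1$ kernels listed in the theorem. Surjectivity on $A$ is \Cref{prop_surj_dist_pm_simple_full}, and since each $Q_{\pm\alpha}$ embeds in $G$ the defining relations of $I$ hold in $\mathrm{Dist}(\Res_{F/k}G)$, so the map factors through $R := A/I$. It remains to prove that the induced $\pi \colon R \to \mathrm{Dist}(\Res_{F/k}G)$ is injective.

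First I would construct, for each simple root $\alpha \in \Delta$, a lift $\tau_\alpha \colon \mathrm{Dist}(\Res_{F/k}T_\alpha) \to R$ of the tautological map into $\mathrm{Dist}(\Res_{F/k}G)$. The rank $1$ functional equations \eqref{funct_eq_sl2} and \eqref{funct_eq_su3} hold inside $\Res_{F/k}Q_{\pm\alpha}$, hence modulo $I$ in $R$. As in the proof of \Cref{lem_surj_dist_unip_rank1}, solving for the torus factor exhibits $\mathrm{Dist}(\Res_{F/k}T_\alpha)$ in the image of $\mathrm{Dist}(\Res_{F/k}U_\alpha) \ast^\blacksquare \mathrm{Dist}(\Res_{F/k}U_{-\alpha}) \to R$, and the ambiguity in choosing a preimage lies precisely in the image of the kernel generating $I$, giving a well-defined $\tau_\alpha$. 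Combining these via the factorization \eqref{eq_fact_torus} yields a map $\tau \colon \mathrm{Dist}(\Res_{F/k}T) \to R$ of solid $k$-modules. Together with the tautological embeddings of $\mathrm{Dist}(\Res_{F/k}U^\pm)$ and the multiplication of $R$, this defines
\[
\mu \colon \mathrm{Dist}(\Res_{F/k}U^-) \otimes_k^\blacksquare \mathrm{Dist}(\Res_{F/k}T) \otimes_k^\blacksquare \mathrm{Dist}(\Res_{F/k}U^+) \to R,
\]
whose composition with $\pi$ is the big cell isomorphism \eqref{eq_fact_big_cell}. Hence $\mu$ is split injective and $\pi$ is split surjective, so it suffices to show $\mu$ is surjective.

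Any element of $R$ is a solid combination of alternating words in $\mathrm{Dist}(\Res_{F/k}U^+)$ and $\mathrm{Dist}(\Res_{F/k}U^-)$; by \Cref{lem_gen_unip} applied to each unipotent block, we may further reduce to words in the simple root group distributions. The rank $1$ relations \eqref{funct_eq_sl2}, \eqref{funct_eq_su3}, now valid modulo $I$, allow us to move a negative simple root factor past a positive one at the cost of inserting a torus term, in complete analogy with Bruhat rewriting. Iteration brings any word into $U^- \cdot T \cdot U^+$ form, establishing surjectivity of $\mu$. The main obstacle is the infinitary nature of this rewriting: both convergence in the solid topology and independence of the ordering of rewriting moves must be established by filtering each $\mathrm{Dist}(\Res_{F/k}U_{\pm\alpha})$ by formal order and performing induction on a suitable length modulo higher order terms. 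This is the Takeuchi-style argument of \cite[Proposition 3.6]{Tak83a} and \cite[Theorem 5.1]{Tak83b}, adapted to the infinite-dimensional solid setting.
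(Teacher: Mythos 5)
Your overall strategy is correct and matches the paper's: you build a section $\mu$ of the surjection $\pi\colon R\to\mathrm{Dist}(\Res_{F/k}G)$ out of the big cell factorization \eqref{eq_fact_big_cell} and the torus factorization \eqref{eq_fact_torus}, using the rank $1$ relations in $R$ to lift the torus distributions, and you reduce to showing $\mu$ is surjective. Where you diverge, and where the gap lies, is in how that surjectivity is established.

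You propose a Bruhat-style rewriting, iteratively moving negative simple root factors past positive ones and inserting torus terms until each word lands in $U^-\cdot T\cdot U^+$ form, and you explicitly flag the obstacle: convergence of the rewriting in the solid topology and confluence of the moves, which you hope to handle ``by filtering by formal order and performing induction.'' This is exactly the nontrivial adaptation of Takeuchi's finite-type argument to the solid loop setting, and the proposal does not carry it out; as written it is a gap, not a sketch. The paper avoids it entirely by replacing the iterative rewriting with a one-shot module-stability argument. One shows that the split-injective image $\mathrm{im}(s)\simeq\mathrm{Dist}(U^-)\cdot\mathrm{Dist}(T)\cdot\mathrm{Dist}(U^+)\subset\mathscr{U}$ is stable under left multiplication by $\mathrm{Dist}(\Res_{F/k}U_\alpha)$ for every simple $\alpha$ (the negative case being automatic); since these generate $\mathscr{U}$ by \Cref{lem_gen_unip} and $1\in\mathrm{im}(s)$, one concludes $\mathrm{im}(s)=\mathscr{U}$ with no iteration and hence no convergence question. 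The stability step is a single chain of equalities: (a) $\mathrm{Dist}(U_\alpha)$ normalizes $\mathrm{Dist}(V_{-\alpha})$ via the $Q_\alpha$-relations, (b) the rank $1$ big cell for $G_\alpha\subset Q_\alpha$ trades $\mathrm{Dist}(U_\alpha)\mathrm{Dist}(U_{-\alpha})$ for $\mathrm{Dist}(U_{-\alpha})\mathrm{Dist}(T_\alpha)\mathrm{Dist}(U_\alpha)$, and (c) $\mathrm{Dist}(T)$ normalizes $\mathrm{Dist}(U_\alpha)$. Note that (c) itself requires a preparatory lemma, also missing from your proposal, namely that the various $\mathrm{Dist}(T_\beta)$ assemble into a \emph{commutative} solid subalgebra of $\mathscr{U}$ normalizing each $\mathrm{Dist}(G_\gamma)$; this is shown by surjecting $\mathrm{Dist}(G_\beta)$ from the rank $1$ unipotent pieces and tracking the $T_\alpha$-conjugation through $Q_{\pm\alpha}$. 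In short: your setup and section are right, but the surjectivity step should be recast as stability of a solid submodule under left multiplication by generators, not as a rewriting normal-form argument.
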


\begin{proof}
	Let $\mathscr{U}$ be the solid associative $k$-algebra given by the generators and relations described in the statement of the theorem. We have a surjection $\mathscr{U} \to \mathrm{Dist}(\Res_{F/k}G)$ of solid associative $k$-algebras by \Cref{prop_surj_dist_pm_simple_full}. Our first goal is to define a section $s$ of the previous surjection in the category of solid $k$-modules. By definition, the algebra $\mathscr{U}$ contains $\mathrm{Dist}(U^\pm)$ and $\mathrm{Dist}(T_\alpha)$ for any $\alpha \in \Delta$ as solid associative $k$-subalgebras. This yields the desired section by using the factorizations \eqref{eq_fact_big_cell} and \eqref{eq_fact_torus}. 
	
	To finish the proof, it is enough by \Cref{lem_gen_unip} to prove stability of the corresponding solid $k$-submodule $\mathrm{im}(s)$ under left multiplication by every $\mathrm{Dist}(\Res_{F/k}U_{\alpha})$ for any root $\alpha \in \Delta$.
	Before doing this however, we prove that the solid module given as the product of the $\mathrm{Dist}(\Res_{F/k}T_\alpha)$ gives rise to a solid commutative $k$-subalgebra of $\mathscr{U}$. Let $\alpha\neq \beta \in \Delta$ be distinct positive simple roots. We claim that the natural conjugation map
	\begin{equation}
		\mathrm{Dist}(\Res_{F/k}T_\alpha) \otimes^\blacksquare \mathrm{Dist}(\Res_{F/k}G_\beta)\to \mathrm{Dist}(\Res_{F/k}G_\beta)
	\end{equation} lifts to $\mathscr{U}$, which clearly implies the desired commutativity. Indeed, $\mathrm{Dist}(\Res_{F/k}G_\beta)$ is surjected upon by the algebra coproduct of $\mathrm{Dist}(\Res_{F/k}U_{\pm \beta})$ according to \Cref{lem_surj_dist_unip_rank1}. The latter solid algebra carries a conjugation action by $\mathrm{Dist}(\Res_{F/k}T_\alpha)$ and this action is compatible with the maps to $\mathscr{U}$, because our universal solid algebra contains $\mathrm{Dist}(\Res_{F/k}Q_{\pm\alpha})$ as a solid subalgebra and $U_{\pm \beta},T_\alpha\subset Q_{\pm \alpha}$. 

Finally, we check stability of the section $s$ to the natural surjection $\mathscr{U}\to \mathrm{Dist}(\Res_{F/k}G)$ under left multiplication by $\mathrm{Dist}(\Res_{F/k}U_\alpha)$ for every $\alpha \in \Delta$. 
Writing $U^-=V_{-\alpha} \times U_{-\alpha}$, we deduce from \Cref{lem_dist_fact_flat} a decomposition \begin{equation}\mathrm{Dist}(\Res_{F/k}U^-)=\mathrm{Dist}(\Res_{F/k}V_{-\alpha})\otimes^\blacksquare \mathrm{Dist}(\Res_{F/k}U_{-\alpha}).\end{equation} On the other hand, we know that $\mathrm{Dist}(\Res_{F/k}U_\alpha)$ normalizes $\mathrm{Dist}(\Res_{F/k}V_{-\alpha})$ inside $\mathscr{U}$ due to our imposed relations coming from $\mathrm{Dist}(\Res_{F/k}Q_\alpha)$. In other words, we can switch the order in which $\mathrm{Dist}(\Res_{F/k}V_{-\alpha})$ and $\mathrm{Dist}(\Res_{F/k}U_\alpha)$ are multiplied inside $\mathscr{U}$.
Finally, we can assemble the product \begin{equation}\mathrm{Dist}(\Res_{F/k} U_\alpha)\otimes^\blacksquare \mathrm{Dist}(\Res_{F/k}U_{-\alpha})\subset \mathrm{Dist}(\Res_{F/k}
G_\alpha)\end{equation} inside $\mathscr{U}$, because $Q_\alpha$ contains $G_\alpha$, and then use the factorization \eqref{eq_fact_big_cell} for $G_{\alpha}$ to switch the order of the loop distributions of $\pm \alpha$. Since the solid commutative $k$-subalgebra $\mathrm{Dist}(\Res_{F/k}T)\subset \mathscr{U}$ normalizes $\mathrm{Dist}(\Res_{F/k}U_\alpha)$, we can finally pull this factor across and absorb it into $\mathrm{Dist}(\Res_{F/k}U^+)$, concluding our claim. 
\end{proof}

\subsection{Proof of normality}
In this section, we finally prove \Cref{teo_seminormal_schubert} (which also implies \Cref{teo_normal_mod_local_mergulhado}) using our newly acquired distribution skills. The first result needed is a surjectivity one, giving us some control on what happens with the distributions of the semi-normal loop group.

\begin{lemma}\label{lem_surj_dist_snloop}
If $G$ is residually split, the natural map of solid associative $k$-algebras
	\begin{equation}\label{eq_parah_jet_surj_dist}
		\ast^\blacksquare_{\alpha \in \Delta} \mathrm{Dist}(\Res_{O/k}\calG_{s_\alpha}) \to \mathrm{Dist}(\Res^{\mathrm{sn}}_{F/k}G)
	\end{equation}
is a surjection. Here, $\calG_{s_\alpha}$ denotes the parahoric $O$-model attached to a wall of a fixed alcove with associated Iwahori $O$-model $\calI$.
\end{lemma}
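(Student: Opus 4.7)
The plan is to follow the spirit of the Serre presentation (\Cref{thm_serre_pres_loop}), replacing the generating pieces $\mathrm{Dist}(\Res_{F/k}U^\pm)$ by pieces coming from the parahoric jets. A preliminary reduction lets us assume $G$ is simply connected semisimple: the torus factor is handled via its obvious embedding $\Res_{O/k}\calT \subset \Res_{O/k}\calG_{s_\alpha}$ together with the torus factorization from the proof of \Cref{prop_surj_dist_pm_simple_full}, and the derived group may be replaced by its simply connected cover by standard central isogeny arguments compatible with parahoric models.

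For simply connected residually split $G$, \Cref{thm_serre_pres_loop} combined with \Cref{prop_surj_dist_pm_simple_full} reduces the surjectivity to showing that the solid algebra generated by the parahoric jet distributions contains $\mathrm{Dist}(\Res_{F/k}U_\alpha)$ for each finite simple root $\alpha$, with both signs. Because $\Res_{F/k}U_\alpha$ is already reduced and semi-normal, this also yields the corresponding statement after semi-normalization. Iterating \Cref{lem_dist_fact_flat} along the ind-presentation $\Res_{F/k}U_\alpha = \bigcup_N t^{-N}\Res_{O/k}\calU_\alpha$, we see that $\mathrm{Dist}(\Res_{F/k}U_\alpha)$ is topologically generated as a solid associative $k$-algebra by the distributions of the affine root subgroups $U_{\alpha+n} = t^n\calU_\alpha$, $n \in \bbZ$.

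I then place the individual affine-root-group distributions in the image via Bruhat--Tits theory. Positive affine roots have their integral model inside the Iwahori $\calI \subset \calG_{s_\beta}$ for every $\beta$, so their distributions are immediate. A negative affine root $\alpha + n$ lies in the affine Weyl orbit of some affine simple root $\beta_0 \in \Delta$, realized by a product $w = s_{\beta_1} \cdots s_{\beta_r}$ of affine simple reflections of the appropriate length class. Each lift $\dot{s}_{\beta_i} \in \calG_{s_{\beta_i}}$ sits in the image of the algebra coproduct, and conjugation by them is an inner automorphism of $\Res_{F/k}G$ fixing the identity, so it preserves the image while transporting $\mathrm{Dist}(U_{\beta_0})$ onto $\mathrm{Dist}(U_{\alpha+n})$.

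The principal obstacle I foresee is the solid-topological bookkeeping: after placing each individual affine-root-group distribution in the image, one must verify that the solid associative coproduct of this countable family genuinely reconstructs $\mathrm{Dist}(\Res_{F/k}U_\alpha)$, rather than only a dense subalgebra. I expect this to follow from the compatibility of solid tensor products with the ind-scheme structure of loop groups, essentially dualizing the colimit decomposition of formal sections in the style of \Cref{lem_dist_fact_flat}. A minor secondary point is the length-class stratification of affine Weyl orbits in non-simply-laced types, but every length class is represented by an affine simple root among the walls of the base alcove, so the conjugation argument still covers all cases.
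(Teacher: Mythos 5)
Your proposal contains a circularity that undermines the reduction step. You claim that \Cref{thm_serre_pres_loop} together with \Cref{prop_surj_dist_pm_simple_full} reduces the problem to showing the parahoric jet distributions contain the various $\mathrm{Dist}(\Res_{F/k}U_\alpha)$. But both of those results concern $\mathrm{Dist}(\Res_{F/k}G)$, not $\mathrm{Dist}(\Res^{\mathrm{sn}}_{F/k}G)$, and the map $\mathrm{Dist}(\Res^{\mathrm{sn}}_{F/k}G) \to \mathrm{Dist}(\Res_{F/k}G)$ is a priori only a surjection. The fact that $\Res_{F/k}U_\alpha$ is semi-normal tells you its distributions land inside $\mathrm{Dist}(\Res^{\mathrm{sn}}_{F/k}G)$, but it does not tell you they \emph{generate} it; establishing that is precisely equation \eqref{eq_unip_gen_full_sn_gp} in the proof of \Cref{prop_seminormal_reducao_rk1}, and the paper derives it \emph{from} \Cref{lem_surj_dist_snloop}, not independently of it. So your ``reduction'' is either proving a surjection onto the wrong target or silently assuming the conclusion.

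The missing idea is a direct geometric handle on $\Res^{\mathrm{sn}}_{F/k}G$ that bypasses the Serre presentation. The paper achieves this via Demazure resolutions: the BSDH variety $\Gr_{\calI,\leq s_\bullet}$ maps properly and birationally onto $\Gr^{\mathrm{sn}}_{\calI,\leq w}$ (using normality from \Cref{thm_splinter}), hence by Chevalley's lemma the map is formally dominant, and \Cref{lem_dist_surj_sch_dominant} gives a distribution surjection. Lifting this to the $\Res_{O/k}\calI$-torsors turns the source into the convolution product of the parahoric jet groups $\Res_{O/k}\calG_{s_i}$, whose distributions are manifestly generated by the factors, and passing to the colimit over $w$ yields the lemma. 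The key point is that the source of that surjection is built out of parahoric jet groups by construction; there is no need to first locate the full loop root groups $\Res_{F/k}U_\alpha$ in the image, and in particular no need for the affine-root conjugation argument you propose, which would only be available \emph{after} the statement you are trying to prove.
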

\begin{proof}
	Consider the Demazure resolution $\Gr_{\calI,\leq s_\bullet}\to \Gr^{\mathrm{sn}}_{\calI,\leq w}$ which is a proper birational cover. In particular, it surjects at the level of distributions supported at the identity by \Cref{lem_dist_surj_sch_dominant} and Chevalley's lemma, compare with \cite[Lemma 7.3]{HLR18}. This map lifts to the natural $\Res_{O/k}\calI$-torsors on the right as follows
	\begin{equation}
		\Res_{O/k}\calG_{s_1}\times^{\Res_{O/k}\calI}\times\dots\times^{\Res_{O/k}\calI}\Res_{O/k}\calG_{s_n}\to (\Res^{\mathrm{sn}}_{F/k}G)_{\leq w}
	\end{equation}
by multiplying the subgroups on the left inside $\Res_{F/k}^{\mathrm{sn}}G$.
Note that the left side is built out of parahoric jet groups and it surjects again by formal dominance after taking $\mathrm{Dist}$, see \Cref{lem_dist_surj_sch_dominant}. 
Passing to colimits, we get our claim.
\end{proof}

Our next step consists of a reduction to rank $1$ groups.
\begin{proposition}\label{prop_seminormal_reducao_rk1}
	Assume $G$ is simply connected and residually split. If \Cref{teo_seminormal_schubert} holds for every rank $1$ subgroup $G_\alpha$, then it also does for $G$.
\end{proposition}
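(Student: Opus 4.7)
The plan is to apply Lemma \ref{lem_reduction_step_form_etale} followed by Lemma \ref{lem_dist_iso_form_etale}, reducing the statement to showing that the induced map $\iota \colon \mathrm{Dist}(\Res^{\mathrm{sn}}_{F/k}G) \to \mathrm{Dist}(\Res_{F/k}G)$ is an isomorphism of solid $k$-modules. I would argue surjectivity and injectivity separately.

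For surjectivity, the first step will be to observe that $U^\pm$ is isomorphic as an $F$-scheme to an affine space, so that $\Res_{F/k}U^\pm$ is an ind-smooth ind-scheme, hence semi-normal. The universal property of semi-normalization will then make the inclusion $\Res_{F/k}U^\pm \hookrightarrow \Res_{F/k}G$ factor through $\Res^{\mathrm{sn}}_{F/k}G$, so that the topological generators of $\mathrm{Dist}(\Res_{F/k}G)$ supplied by Proposition \ref{prop_surj_dist_pm_simple_full} all lift to $\mathrm{Dist}(\Res^{\mathrm{sn}}_{F/k}G)$. This immediately gives surjectivity of $\iota$.

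For injectivity, I would invoke the Serre presentation of Theorem \ref{thm_serre_pres_loop}: the kernel of the composition $\mathrm{Dist}(\Res_{F/k}U^+) \ast^\blacksquare \mathrm{Dist}(\Res_{F/k}U^-) \twoheadrightarrow \mathrm{Dist}(\Res_{F/k}G)$ is the solid ideal generated by the $Q_{\pm \alpha}$-relations, so it suffices to verify that each such relation already maps to zero in $\mathrm{Dist}(\Res^{\mathrm{sn}}_{F/k}G)$. This in turn reduces to producing, for every $\alpha \in \Delta$, a solid associative $k$-algebra map $\mathrm{Dist}(\Res_{F/k}Q_{\pm \alpha}) \to \mathrm{Dist}(\Res^{\mathrm{sn}}_{F/k}G)$ which agrees with the unipotent embeddings. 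Writing $Q_{\pm \alpha} = G_\alpha \ltimes V_{\pm \alpha}$ and invoking Lemma \ref{lem_dist_fact_flat}, the source decomposes as $\mathrm{Dist}(\Res_{F/k}G_\alpha) \otimes^\blacksquare \mathrm{Dist}(\Res_{F/k}V_{\pm \alpha})$; the rank $1$ hypothesis combined with Lemma \ref{lem_dist_iso_form_etale} identifies this with $\mathrm{Dist}(\Res^{\mathrm{sn}}_{F/k}G_\alpha) \otimes^\blacksquare \mathrm{Dist}(\Res_{F/k}V_{\pm \alpha})$.

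The hard part will be constructing a geometric lift of the multiplication map $\Res^{\mathrm{sn}}_{F/k}G_\alpha \times \Res_{F/k}V_{\pm \alpha} \to \Res_{F/k}G$ through $\Res^{\mathrm{sn}}_{F/k}G$, from which the desired algebra map will follow by functoriality of $\mathrm{Dist}$, with compatibility automatic. For this I need the product on the left to itself be semi-normal; here I would use that by Theorem \ref{thm_splinter} the Schubert strata of $\Res^{\mathrm{sn}}_{F/k}G_\alpha$ are globally $+$-regular, a property preserved under products with smooth ind-schemes and which implies semi-normality. The Serre relations from Theorem \ref{thm_serre_pres_loop} will therefore also vanish in $\mathrm{Dist}(\Res^{\mathrm{sn}}_{F/k}G)$, establishing injectivity and concluding the proof.
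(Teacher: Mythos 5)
Your overall strategy matches the paper's: reduce via \Cref{lem_reduction_step_form_etale} and \Cref{lem_dist_iso_form_etale} to showing that $\iota\colon\mathrm{Dist}(\Res^{\mathrm{sn}}_{F/k}G)\to\mathrm{Dist}(\Res_{F/k}G)$ is an isomorphism, lift the unipotent loop groups to the semi-normalization for surjectivity, and use the Serre presentation of \Cref{thm_serre_pres_loop} together with a lift of $\Res_{F/k}Q_{\pm\alpha}$ (semi-normal, being a product of the semi-normal $\Res_{F/k}G_\alpha$ with the ind-smooth $\Res_{F/k}V_{\pm\alpha}$) for injectivity.

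However, your injectivity step has a genuine gap. Writing $A=\mathrm{Dist}(\Res_{F/k}U^+)\ast^\blacksquare\mathrm{Dist}(\Res_{F/k}U^-)$, $B=\mathrm{Dist}(\Res^{\mathrm{sn}}_{F/k}G)$, $C=\mathrm{Dist}(\Res_{F/k}G)$, you show that the $Q_{\pm\alpha}$-relations generating $\ker(A\to C)$ already die in $B$, i.e.\ $\ker(A\to C)\subseteq\ker(A\to B)$. This yields that $\iota$ is injective \emph{on the image of $A$ in $B$}, but it says nothing about elements of $B$ outside that image. You only established surjectivity of $\iota$, not of $A\to B$; the two statements are independent, and without the latter you cannot conclude $\ker(\iota)=0$. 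Filling this hole is not immediate, because one has no a priori handle on $\mathrm{Dist}(\Res^{\mathrm{sn}}_{F/k}G)$ and the functional equations that produce the torus distributions from unipotent ones in $\mathrm{Dist}(\Res_{F/k}G)$ do not transparently live in the semi-normalization (the non-reduced torus loop group does not embed there). The paper solves this via a separate geometric input, \Cref{lem_surj_dist_snloop}, which uses Demazure resolutions to show that $\ast^\blacksquare_{\alpha}\mathrm{Dist}(\Res_{O/k}\calG_{s_\alpha})$ surjects onto $\mathrm{Dist}(\Res^{\mathrm{sn}}_{F/k}G)$, and then combines it with \Cref{lem_surj_dist_unip_rank1} to get surjectivity of $A\to B$. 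You should add an argument of this kind before your injectivity step can close.
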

\begin{proof}
It is enough by \Cref{lem_reduction_step_form_etale} to show that the semi-normalization map for $\Res_{F/k}G$ is formally étale, and this can be verified at the distribution level, i.e., if we show that
	\begin{equation}
		\mathrm{Dist}(\Res^{\mathrm{sn}}_{F/k}G) \to \mathrm{Dist}(\Res_{F/k}G)
	\end{equation}
	is an isomorphism, thanks to \Cref{lem_dist_iso_form_etale}.

	By \Cref{thm_serre_pres_loop}, we are reduced to showing that $\mathrm{Dist}(\Res_{F/k}^{\mathrm{sn}}G)$ satisfies the generators and relations described in that statement. First of all, note that the semi-normal ind-group scheme $\Res_{F/k}U^\pm$ naturally sits inside $\Res_{F/k}^{\mathrm{sn}}G$ as a closed subgroup by naturality of the semi-normalization functor. Next, we note that by \Cref{lem_dist_fact_flat} applied to the big cell of a parahoric $\calG$ fixing an alcove of the standard appartment, we have a factorization
	\begin{equation}
		\mathrm{Dist}(\Res_{O/k}\calG)=\mathrm{Dist}(\Res_{O/k}\calU^-)
\otimes^\blacksquare \mathrm{Dist}(\Res_{O/k}\calT)\otimes^\blacksquare \mathrm{Dist}(\Res_{O/k}\calU^+),	\end{equation}
where $\calU^\pm$ and $\calT$ denote the corresponding $O$-models of $U^\pm$ and $T$ sitting inside $\calG$ as smooth closed subgroups. Since the jet group $\Res_{O/k}\calT$ is also semi-normal by \Cref{lem_res_jet}, it naturally lifts to $\Res_{F/k}^{\mathrm{sn}}G$. Moreover, it decomposes as a product of the $\Res_{O/k}\calT_\alpha$ for $\alpha \in \Delta$, because $G$ is simply connected. By assumption, we also know that $\Res_{F/k}G_\alpha$ is semi-normal, so it lifts canonically to $\Res_{F/k}^{\mathrm{sn}}G$ compatibly with $\Res_{F/k}U_{\pm \alpha}$ and $\Res_{O/k}\calT_\alpha$. In particular, we deduce that
\begin{equation}\label{eq_unip_gen_full_sn_gp}
	\mathrm{Dist}(\Res_{F/k}U^+)\ast^\blacksquare\mathrm{Dist}(\Res_{F/k}U^-)\to \mathrm{Dist}(\Res_{F/k}^{\mathrm{sn}}G)
\end{equation}
is a surjection of solid associative $k$-algebras, by combining \Cref{lem_surj_dist_unip_rank1,lem_surj_dist_snloop}.
	
	Seeing as $\mathrm{Dist}(\Res_{F/k}^{\mathrm{sn}}G)\to \mathrm{Dist}(\Res_{F/k}G)$ is a surjection of solid associative $k$-algebras, it is enough to prove that the kernel of \eqref{eq_unip_gen_full_sn_gp} contains that of \eqref{eq_unip_gen_full_gp}. Due to \Cref{thm_serre_pres_loop}, this follows by observing that the semi-normal ind-scheme $\Res_{F/k}Q_{\pm\alpha}$ lifts canonically to $\Res_{F/k}^{\mathrm{sn}}G$ 
	by naturality of the semi-normalization functor and our assumed rank $1$ case.
\end{proof}

In the next corollary, we say that a simply connected group is $\mathrm{SU}_3$-free if all of its rank $1$ subgroups $G_\alpha$ over any unramified extension of $F$ are inner forms of restrictions of scalars of $\mathrm{SL}_2$.

\begin{corollary}\label{cor_su3_free_seminormal}
	\Cref{teo_seminormal_schubert} holds for all $\mathrm{SU}_3$-free groups.
\end{corollary}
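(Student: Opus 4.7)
The plan is to combine the rank-one reduction in \Cref{prop_seminormal_reducao_rk1} with the explicit normality input \Cref{prop_lci}. I would first perform the standard preparatory reductions: by \Cref{lem_reduction_step_form_etale}, I may assume $G$ is simply connected; by étale descent and enlarging $k$ suitably (using that $\Gal_k$ is procyclic, so that Steinberg's theorem applies), I may further assume $G$ is residually split. Then \Cref{prop_seminormal_reducao_rk1} reduces the problem to verifying \Cref{teo_seminormal_schubert} for each rank-$1$ subgroup $G_\alpha$. By the $\mathrm{SU}_3$-free hypothesis, which persists after unramified base change, each $G_\alpha$ is of the form $\Res_{F_\alpha/F}\mathrm{SL}_{2,F_\alpha}$ for a finite separable field extension $F_\alpha/F$.

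For each such $G_\alpha$ I would fix the hyperspecial parahoric $\calG_\alpha:=\Res_{O_\alpha/O}\mathrm{SL}_{2,O_\alpha}$ and observe that the quotient map $\Res_{F/k}G_\alpha\to \Gr_{\calG_\alpha}$ is a torsor under the pro-smooth jet group $\Res_{O/k}\calG_\alpha$, so semi-normality descends in both directions. Combined with \Cref{thm_splinter}, \Cref{teo_seminormal_schubert} for $G_\alpha$ therefore amounts to the normality of every Schubert variety $\Gr_{\calG_\alpha,\leq w}$. Writing $F_\alpha=k_\alpha\rpot{\pi_\alpha}$ and using the identity $\Res_{F/k}\Res_{F_\alpha/F}=\Res_{k_\alpha/k}\Res_{F_\alpha/k_\alpha}$, the base change to $\bar{k}$ identifies $\Gr_{\calG_\alpha,\bar{k}}$ with a product of $[k_\alpha:k]$ copies of the standard hyperspecial $\mathrm{SL}_2$-affine Grassmannian over $\bar{k}\rpot{\pi_\alpha}$. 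The Schubert varieties of this latter Grassmannian sit inside the neutral component of $\Gr_{\mathrm{GL}_{2,\bar{k}\pot{\pi_\alpha}}}$ and are exactly those handled by \Cref{prop_lci}.

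The main technical obstacle is the careful bookkeeping when unfolding the iterated restriction of scalars after base change to $\bar{k}$ and confirming that the resulting decomposition really is a finite product of hyperspecial $\mathrm{SL}_2$-Grassmannians. Once that identification is clean, normality—being étale-local and stable under finite products—transfers from \Cref{prop_lci} to $\Gr_{\calG_\alpha}$, then to $\Res_{F/k}G_\alpha$ via the pro-smooth jet-group torsor, and finally back up to $G$ through \Cref{prop_seminormal_reducao_rk1}, with \Cref{lem_reduction_step_form_etale} handling the reduction to the simply-connected case. The hypothesis $p\nmid\#\pi_1(G_{\mathrm{der}})$ enters only in that initial reduction and is automatically satisfied thereafter, since $\Res_{F_\alpha/F}\mathrm{SL}_{2,F_\alpha}$ is itself simply connected.
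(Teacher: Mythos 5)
Your argument follows essentially the same route as the paper: reduce to $G$ simply connected and residually split via \Cref{lem_reduction_step_form_etale} and étale descent, invoke \Cref{prop_seminormal_reducao_rk1} to pass to the rank-$1$ subgroups $G_\alpha\cong\Res_{F_\alpha/F}\mathrm{SL}_{2,F_\alpha}$, and supply normality of their Schubert varieties from \Cref{prop_lci}. The only difference is that you spell out the unfolding $\Res_{F/k}\Res_{F_\alpha/F}=\Res_{k_\alpha/k}\Res_{F_\alpha/k_\alpha}$ and the resulting product decomposition over $\bar k$ that makes \Cref{prop_lci} literally applicable, which the paper leaves implicit; this is a correct and helpful clarification rather than a different argument.
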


\begin{proof}
	By \Cref{lem_reduction_step_form_etale}, we may assume that $G$ is simply connected. After enlarging $k$, we may also assume by étale descent that $G$ is residually split. If $G$ is $\mathrm{SU}_3$-free, then all its rank $1$ simply connected subgroups $G_\alpha$ are isomorphic to a restriction of scalars of $\mathrm{SL}_2$. By \Cref{prop_lci}, we know that the Schubert varieties of $\mathrm{SL}_2$ are normal, so we conclude by the previous \Cref{prop_seminormal_reducao_rk1}.
\end{proof}

The only problem now is that we didn't yet prove \Cref{teo_seminormal_schubert} for odd unitary groups, so now we must find a way of completing it, and this will be done via the rank $1$ case of \Cref{teo_seminormal_modelo_local}, for which we need an independent proof.

\begin{lemma}\label{lem_seminormal_modelo_local_rk1}
	\Cref{teo_seminormal_modelo_local} holds for $G$ of rank $1$ independently of \Cref{teo_seminormal_schubert}.
\end{lemma}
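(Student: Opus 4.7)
My plan is to treat rank $1$ almost simple groups directly through explicit moduli descriptions of their local models, extending the argument of \Cref{prop_lci} to the BD deformation. By a standard $z$-extension and reduction along the isogeny $G_{\mathrm{sc}}\to G_{\mathrm{der}}$, I may assume $G$ is absolutely almost simple and simply connected; in rank $1$ this leaves only $G=\Res_{F'/F}\mathrm{SL}_{2,F'}$ and $G=\Res_{F''/F}\mathrm{SU}_{3,F'/F''}$ for suitable separable field extensions. Neither case requires \Cref{teo_seminormal_schubert}, but both need an independent handle on the flat closure.

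\textbf{First}, for the $\mathrm{SL}_2$-type case, I would lift \Cref{prop_lci} to the BD setting. The natural $\Res_{O^2/O}\calG$-torsor over $\Gr_{\calG,O,\leq n\mu}$ is cut out inside the BD jet scheme of $2\x2$ matrices with entries in $O'$ as the fiber of the relative determinant over the section $(t-a)^{n}$, where $a$ denotes the $O$-coordinate. A fiberwise tangent space and dimension calculation, identical to the one of \Cref{prop_lci} but now carried out over $O$, shows this is a flat family over $O$ of normal Cohen--Macaulay complete intersections. Normality and Cohen--Macaulayness of $\Gr_{\calG,O,\leq n\mu}$ follow. The $\varphi$-splitness of the special fiber is then obtained by a Fedder-type criterion applied to the Koszul-regular defining equation inside the ambient affine jet scheme, which itself carries an evident Frobenius splitting.

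\textbf{Second}, for the $\mathrm{SU}_3$-type case, which I expect to be the main obstacle, the local model is not lci in general. Here I would exploit the Pappas--Rapoport explicit description of ramified unitary local models of rank $1$: the conjugacy class $\mu$ is essentially minuscule, and the naive local model is cut out by a determinant condition together with a hermitian self-duality condition on a lattice. Taking its flat closure over $O_E$ and inspecting the resulting equations by hand should yield normality and Cohen--Macaulayness, as well as an explicit compatible $\varphi$-splitting of the special fiber on the lattice-moduli description. An alternative route is to combine the distribution-theoretic arguments of Section~3 with the generic fiber Schubert varieties of $\Res_{F''/F}\mathrm{SL}_{3,F'/F''}$, which are classical projective spaces, but this still requires an a priori normality input in the special fiber that must be supplied by the direct equation analysis.

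Once $\varphi$-splitness of the special fiber is in hand in both cases, the normality, Cohen--Macaulayness and $\varphi$-rationality assertions propagate exactly as in the final paragraph of the proof of \Cref{teo_seminormal_modelo_local}, and this step is independent of \Cref{teo_seminormal_schubert}. The hard part is the $\mathrm{SU}_3$ case: unlike for $\mathrm{SL}_2$, no global lci presentation is available, and both normality of the flat closure and the splitting of its special fiber must be extracted from the Pappas--Rapoport equations by direct computation.
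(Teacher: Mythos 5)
Your plan diverges substantially from the paper's argument, and the divergence is exactly where the difficulty lives, so I'd flag this as a genuine gap rather than an alternative proof.

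First, you assume that for rank~$1$ the coweight $\mu$ is ``essentially minuscule''. It is not: $G$ is a restriction of scalars of a rank~$1$ group and $\mu$ is an arbitrary geometric conjugacy class of coweights, so $\Gr_{G,E,\leq\mu}$ can have arbitrarily large dimension. The paper spends the first half of its proof reducing to the Iwahori model and to \emph{tiny} $\mu$ via Stein factorization of convolution morphisms, using the rationality of semi-normal Schubert varieties (\Cref{thm_splinter}) plus \cite[Proposition 3.13]{Gor03} to see that Stein factorization commutes with base change and preserves $\varphi$-splitness. That reduction is where most of the structural work happens, and your proposal skips it entirely.

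Second, for the $\mathrm{SU}_3$ case your proposal says that one should take the flat closure of a Pappas--Rapoport-style lattice-moduli local model and ``inspect the resulting equations by hand'' to extract normality, Cohen--Macaulayness and a $\varphi$-splitting. But this is precisely the computation that was historically intractable: the naive equational local models for odd ramified unitary groups are known to be poorly behaved, and the $W(k)$-lift route of \cite{FHLR22} fails at $p=2$, which is why this paper exists. Asserting that the direct equation analysis ``should yield'' the result, while acknowledging that ``the hard part is the $\mathrm{SU}_3$ case'', is a restatement of the problem rather than a solution. The paper avoids this entirely: after reducing to tiny $\mu$ at Iwahori level, the special fiber Schubert variety has dimension at most $2$, and one only needs to verify (i) smoothness of the length-$\leq 2$ Schubert cells, by showing the Demazure map $\Gr_{\calI,\leq (s_0,s_1)}\to\Gr_{\calI,\leq s_0s_1}$ is formally \'etale at the identity via a tangent space count, and (ii) reducedness of the intersection $\Gr_{\calI,\leq s_0s_1}\cap\Gr_{\calI,\leq s_1s_0}$, which is done by plugging the explicit $\mathrm{SU}_3$ exchange relation \eqref{funct_eq_su3} into coordinates and reading off the ideal. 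This is a small, concrete computation, completely unlike a normality verification on a global lattice-moduli presentation.

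Your $\mathrm{SL}_2$ paragraph is plausible but heavier than needed; the paper declares this case an exercise because it is not actually required for \Cref{cor_seminormal_all}. If you wish to retain your outline, the honest thing to say is that the $\mathrm{SU}_3$ step is open under your strategy and that a separate mechanism is needed; the paper's mechanism is the tiny-$\mu$ reduction plus the dimension-$\leq 2$ tangent-space and intersection analysis.
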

\begin{proof}
	We may and do assume that $G$ is residually split and adjoint, so as to get the full Iwahori--Weyl group and the full coweight lattice. 
	Let $\calI$ be a Iwahori $O$-model dilated from $\calG$. We note that $\Gr_{\calG,O_E,\leq \mu}^{\mathrm{sn}}$ arises as the Stein factorization of the map
	\begin{equation}\label{eq_map_conv_parahoric}
		\Gr_{\calI,O_E,\leq \mu}^{\mathrm{sn}}\to \Gr_{\calG,O_E}
	\end{equation} because its fibers are geometrically connected and induce separable field extensions. Assume that the \Cref{teo_seminormal_modelo_local} holds for $\calI$ and $\mu$. Then, the geometric fibers of \eqref{eq_map_conv_parahoric} have vanishing higher direct images, as seen by the rationality of semi-normal Schubert varieties, see \Cref{thm_splinter}, and induction to treat unions of those like in \cite[Lemma 4.22]{FHLR22}. We deduce that the Stein factorization commutes with base change by \cite[Proposition 3.13]{Gor03}. As $\varphi$-splitness is preserved under proper pushforward, it is enough to treat the case of a Iwahori $O$-model $\calI$.
	
	Similarly, given a sequence $\mu_\bullet$ of conjugacy classes of geometric coweights of $G$ with reflex field $E_\bullet$, we consider the convolution map
	\begin{equation}\label{eq_map_conv_sequence}
		\Gr_{\calI,O_{E_\bullet},\leq \mu_\bullet}^{\mathrm{sn}}\to \Gr_{\calI,O_{E_\bullet}}, 
	\end{equation} 
	whose Stein factorization equals $\Gr^{\mathrm{sn}}_{\calI,O_{E_\bullet},\leq \mu}$ where $\mu$ denotes the sum of the $\mu_i$. Assume that \Cref{teo_seminormal_modelo_local} holds for $\calI$ and $\mu_i$. Then, we can deduce again from the rationality of Schubert varieties, see \Cref{thm_splinter}, the vanishing of the higher direct images of the geometric fibers of \eqref{eq_map_conv_sequence}, and hence the Stein factorization commutes with base change, again by \cite[Proposition 3.13]{Gor03}. Using this and flat descent along $O_E \to O_{E_\bullet}$, we conclude that $\Gr_{\calI,O_{E},\leq \mu}^{\mathrm{sn}}$ has $\varphi$-split special fiber (here, we applied reducedness of the special fiber, see \cite[Theorem 1.3]{GL22}, to know that the semi-normal local model is stable under base change of DVRs).
	In particular, we have reduced the problem to tiny geometric conjugacy classes $\mu$ of coweights in the sense of \cite[Subsection 7.1]{AGLR22}.
	
	Now, finally we perform some calculations for a Iwahori model $\calI$ and tiny $\mu$. We are allowed to replace our adjoint group $G$ by a $z$-extension with simply connected derived subgroup and $\mu$ by an arbitrary lift to this new group. By \Cref{thm_splinter}, we would know that \Cref{teo_seminormal_modelo_local} holds in rank $1$ as soon as $\Gr_{\calI,k,\leq \mu}$ is semi-normal. Since its dimension is at most $2$, it suffices to show smoothness of the Schubert varieties $\Gr_{\calI,\leq w}$ where $\ell(w)\leq 2$, and that their intersections are reduced. Translating to the neutral component, we may assume that $G$ is simply connected and $w$ can assume the values $1$, $s_0$, $s_1$, $s_0s_1$ or $s_1s_0$ where the $s_i$ are the distinct simple reflections. The Demazure variety $\Gr_{\calI,\leq (s_0,s_1)}$ is smooth and its tangent space maps bijectively to the $2$-dimensional $k$-vector space $(\mathrm{Lie}(\calG_{s_0})+\mathrm{Lie}(\calG_{s_1}))/\mathrm{Lie}(\calI)$. This shows formal étaleness at the identity and implies our claim. 
	
	Next, to treat reducedness of the intersections, we focus on the smooth $k$-surfaces $\Gr_{\calI,\leq s_0s_1}$ with $\Gr_{\calI,\leq s_1s_0}$. We leave the case $G=\Res_{F_\alpha/F}\mathrm{SL}_{F_\alpha/F}$ to the reader, because it is easy to verify and not really needed anyway. So, we have $G=\Res_{F_{2\alpha}/F}\mathrm{SU}_{3,F_\alpha/F_{2\alpha}}$ and can write local sections of $\Res_{O/k}\calG_{s_i} \to \Gr_{\calI,\leq s_i}$ as follows: either $x_\alpha(z_1,0)$ with $z_1 \in \nu k \subset F_\alpha^0$ if $i=0$, or $x_{-\alpha}(0,z_2)$ with $z_2 \in \mu k \subset F_\alpha$ when $i=1$, under the condition that the inverse of $\lambda N(\nu)\mu$ is a prime element of $O_{\alpha}$, see \cite[4.3.5]{BT84}. If we regard $z_1$ and $z_2$ as formal variables, we can look at equation \eqref{funct_eq_su3} and we deduce that the product $x_\alpha(z_1,0)x_{-\alpha}(0,z_2)$ defines an element of $\Gr_{\calI,\leq s_1s_0}$ only if $g$ is an integer and $\lambda N(f_1)$ is divisible by $\nu$. Note that $g=\lambda N(z_1)z_2$ and $f_1=-z_1z_2$ under our assumptions. The first condition implies that $z_1^2z_2$ vanishes, whereas the second implies that $z_1z_2$ vanishes. This equation spreads out to a reduced locally closed subscheme of $\bbA^2_k$, and it implies that the intersection of $\Gr_{\calI,\leq s_0s_1}$ with $\Gr_{\calI,\leq s_1s_0}$ equals the union of $\Gr_{\calI,\leq s_0}$ and $\Gr_{\calI,\leq s_1}$. 
\end{proof}

By \Cref{lem_ind_mod_loc_semi_norm}, we know that the semi-normal $O$-loop group $\Res_{O^2_\circ/O}^{\mathrm{sn}}\calG$ is an ind-scheme and its special fiber is semi-normal, i.e., it identifies with our original loop group $\Res^{\mathrm{sn}}_{F/k}G$ over $k$. Once again, we must show that the natural morphism
\begin{equation}\label{eq_seminormalization_bdloop}
	\Res_{O^2_\circ/O}^\mathrm{sn}\calG \to \Res_{O^2_\circ/O}\calG
\end{equation}
is a formally étale map of formally $O$-flat ind-schemes, by the $O$-flat version of \Cref{lem_form_etale_completion}. This has to be true in light of \Cref{teo_normal_mod_local_mergulhado}, but if we prove it independently for some parahoric model $\calG$, then it would also yield \Cref{teo_seminormal_schubert} for its generic fiber $G$.

In order to do this, we define again $\mathrm{Dist}(X,x)$ for a pointed ind-scheme over $O$ as the solid $O$-dual of the ring of formal sections, where we regard $O$ with its $t$-adic analytic structure. Note that $\mathrm{Dist}(X,x)$ only depends on the flat closure of the formal completion. We can see that most of our general lemmas on distributions continue to be true over $O$ provided that we restrict to maps of formally flat pointed ind-schemes over $O$, most notably \Cref{lem_dist_iso_form_etale,lem_dist_fact_flat}. However, there is no analogue of \Cref{lem_dist_surj_sch_dominant}, because injections of solid $O$-modules do not necessarily dualize to surjections, e.g., if the cokernel has torsion. 
In our concrete situation, we can remedy this failure thanks to the following explicit calculation with root groups.

\begin{proposition}\label{prop_gen_unip_bd_sc_red_quot}
	Assume $G$ is residually split and that the reductive quotient of $\calG_k$ is simply connected. Then, the natural map
	\begin{equation}
		\mathrm{Dist}(\Res_{O^2_\circ/O}\calU^+)\ast^\blacksquare_O \mathrm{Dist}(\Res_{O^2_\circ/O}\calU^-)\to \mathrm{Dist}(\Res_{O^2_\circ/O}\calG)
	\end{equation}
is a surjection of solid associative $O$-algebras.
\end{proposition}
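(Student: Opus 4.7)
The plan is to adapt the proof of \Cref{prop_surj_dist_pm_simple_full} to the $O$-relative Beilinson--Drinfeld setting, using the hypothesis on the reductive quotient of $\calG_k$ being simply connected in place of the assumption that $G$ itself is simply connected. By \cite[Théorème 3.8.1]{BT84}, $\calG$ admits a big cell $\calC = \calU^- \times_O \calT \times_O \calU^+$ as an open $O$-subscheme. Combined with the $O$-relative analogues of \Cref{lem_res_loop} and \Cref{lem_dist_fact_flat}, this yields
\begin{equation}
\mathrm{Dist}(\Res_{O^2_\circ/O}\calG) \simeq \mathrm{Dist}(\Res_{O^2_\circ/O}\calU^-) \otimes^\blacksquare_O \mathrm{Dist}(\Res_{O^2_\circ/O}\calT) \otimes^\blacksquare_O \mathrm{Dist}(\Res_{O^2_\circ/O}\calU^+),
\end{equation}
so the unipotent factors are tautologically in the image of the map in the statement, and the question reduces to showing that $\mathrm{Dist}(\Res_{O^2_\circ/O}\calT)$ lies in it.

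The assumption that the reductive quotient of $\calG_k$ is simply connected forces the cocharacter lattice of $\calT$ to equal the coroot lattice, so $\calT$ splits as $\prod_{\alpha \in \Delta} \calT_\alpha$ over $O$ and \Cref{lem_dist_fact_flat} further factorizes the torus distributions. Moreover, the commutator calculations of \Cref{lem_gen_unip} carry over verbatim to the $O$-relative setting, since the root-group commutation relations of \cite[Addendum]{BT84} are $O$-integral. Thus it suffices to show, for each simple root $\alpha$, that $\mathrm{Dist}(\Res_{O^2_\circ/O}\calT_\alpha)$ lies in the image of the analogous coproduct $\mathrm{Dist}(\Res_{O^2_\circ/O}\calU_\alpha) \ast^\blacksquare_O \mathrm{Dist}(\Res_{O^2_\circ/O}\calU_{-\alpha})$ mapping into $\mathrm{Dist}(\Res_{O^2_\circ/O}\calG_\alpha)$. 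For this, I would invoke the integral versions of the functional equations \eqref{funct_eq_sl2} or \eqref{funct_eq_su3}, which hold within $\Res_{O^2_\circ/O}\calG_\alpha$ by spreading out from the field case along the $O$-integral structure of $\calG_\alpha$. Isolating the torus term $\alpha^\vee(1+g(z_1,z_2))$ exhibits a natural map of formally $O$-flat pointed ind-schemes from a product of unipotent loop spaces to a neighborhood of the identity in $\Res_{O^2_\circ/O}\calT_\alpha$.

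The main obstacle is converting this formal dominance into surjectivity at the level of distributions, since \Cref{lem_dist_surj_sch_dominant} has no unconditional $O$-analogue because of possible $t$-torsion in the cokernel when dualizing injections of solid $O$-modules. To overcome this, I would rely on the $O$-relative version of \Cref{lem_flat_bdloop} to ensure that both source and target of the rank-one map above are formally $O$-flat, so that their distribution algebras are $t$-torsion free solid $O$-modules. Surjectivity between such modules can then be tested after reducing modulo $t$, where the map recovers exactly the rank-one surjectivity already established in \Cref{lem_surj_dist_unip_rank1} applied to the ordinary loop group of $G_\alpha$ over $k$. A topological Nakayama argument in the solid framework, using the $t$-adic analytic structure of $O$, then promotes this special-fiber surjectivity to the full $O$-relative statement, completing the proof.
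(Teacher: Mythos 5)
Your high-level reduction mirrors the paper's: big cell factorization, rank-one torus factorization, and the functional equations \eqref{funct_eq_sl2} and \eqref{funct_eq_su3}. But the decisive step — extracting $O$-surjectivity of distribution algebras from formal dominance of the rank-one map — is handled in a way that has a real gap. The paper explicitly warns (in the paragraph just before the proposition) that there is no $O$-analogue of \Cref{lem_dist_surj_sch_dominant}, precisely because dualizing an injection of solid $O$-modules need not produce a surjection when the cokernel has torsion. Your proposed remedy — reduce mod $t$ to \Cref{lem_surj_dist_unip_rank1} and then invoke topological Nakayama — does not close this gap. Distribution modules $\mathrm{Dist}(\Res_{O^2_\circ/O}\calX)$ are filtered \emph{colimits} of compact projective solid $O$-modules, hence are not in general $t$-adically (or derived $t$-adically) complete; and Nakayama fails for such modules. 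For instance, on $\bigoplus_{n\geq 1} O e_n$ the map $e_n\mapsto e_n - t e_{n+1}$ is the identity modulo $t$ but is not surjective, its cokernel being a nonzero $t$-divisible torsion-free module. So mod-$t$ surjectivity plus $t$-torsion-freeness is insufficient. You would also need, but do not justify, the compatibility of solid $O$-duality with the special-fiber reduction, i.e. that $\mathrm{Dist}_O/t \simeq \mathrm{Dist}_k$ of the special fiber.

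The paper's actual proof sidesteps all of this with an explicit integral computation: in the $\mathrm{SU}_3$ case it specializes $z_1=z_3=0$ and then $z_1=z_4=0$ in \eqref{funct_eq_su3}, producing torus distributions with coefficients precisely in $R_{2\alpha}$ and in $t_\alpha R_{2\alpha}$ respectively; since $R_\alpha = R_{2\alpha}\oplus t_\alpha R_{2\alpha}$, these \emph{exactly} span the integral lattice. This is what does the work that Nakayama cannot. One more point worth flagging: you assert that the simply-connectedness of the reductive quotient of $\calG_k$ forces the cocharacter lattice of $\calT$ to be the coroot lattice. That factorization of $\calT$ into the $\calT_\alpha$ comes from $G$ itself being simply connected, which the paper uses implicitly; the hypothesis on $\calG_k$ is there to pin down the particular special parahoric (with the integral structure $\nu_\pm O_\alpha\times \mu^{\pm 1} O_{2\alpha}$ that makes the two specializations yield complementary sublattices), not to control the generic-fiber coweight lattice.
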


\begin{proof}
Note that the big cell induces a decomposition
	\begin{equation}
		\mathrm{Dist}(\Res_{O^2_\circ/O}\calG)=\mathrm{Dist}(\Res_{O^2_\circ/O}\calU^-)\otimes_O^\blacksquare \mathrm{Dist}(\Res_{O^2_\circ/O}\calT)\otimes_O^\blacksquare \mathrm{Dist}(\Res_{O^2_\circ/O}\calU^+)
	\end{equation}
of solid $O$-modules by the $O$-flat version of \Cref{lem_dist_fact_flat} and formal flatness of each of the terms, compare with \Cref{lem_flat_bdloop}. Similarly, we also have a factorization
\begin{equation}
	\mathrm{Dist}(\Res_{O^2_\circ/O}\calT)=\otimes_{O,\alpha\in \Delta}^\blacksquare \mathrm{Dist}(\Res_{O^2_\circ/O}\calT_\alpha)
\end{equation}
as solid $O$-modules, owing to the simply connectedness of $G$, and a combination again of \Cref{lem_dist_fact_flat,lem_flat_bdloop}. In other words, it is clear that we can reduce to rank $1$ groups $G$. After enlarging $k$, these are either isomorphic to a restriction of scalars of $\mathrm{SL}_2$ or $\mathrm{SU}_3$. In the first case, $\calG=\Res_{O_\alpha/O}\mathrm{SL}_2$ and it is easy to check (and not really necessary for our proof of \Cref{teo_seminormal_schubert} anyway) the surjectivity, so we leave this task to the reader.

If $G=\Res_{F_{2\alpha}/F}\mathrm{SU}_{3,F_\alpha/F_{2\alpha}}$, then we can describe the special parahoric model $\calG$ with simply connected reductive quotient as follows. Let $\lambda \in F_\alpha^{1}$ be a trace $1$ element of maximal valuation, $\mu \in F_\alpha^0$ be an arbitrary element of trace $0$. Then, by \cite[4.3.5]{BT84} the closed subgroups $\calU_{\pm \alpha} \subset \calG$ are the unique smooth connected $O$-models of $U_{\pm \alpha}$ whose integral points are given by the subset $\nu_{\pm}O_\alpha \times \mu^{\pm 1}O_{2\alpha}\subset F_\alpha \times F_{\alpha}^0$, where $\nu_\pm$ is any element such that $\lambda N(\nu_\pm)$ equals $\mu^{\pm 1}t_\alpha$ up to a unit, where $t_\alpha$ is a uniformizer for $O_\alpha$.

Write the base change of the punctured disk $O^2_\circ$ along $O\to O_\alpha$ (resp.~$O_{2\alpha}$) as $R_\alpha$ (resp.~$R_{2\alpha}$). Note that we have an exchange equation \eqref{funct_eq_su3} at the formal completions, which expresses the middle term $\alpha^\vee(1+g)$ as a product of points of $\calU_{\pm \alpha}$. If we set $z_1=z_3=0$, then $g$ simplifies to $z_2z_4$. Here, $z_2\in \mu R_{2\alpha},z_4 \in\mu^{-1}R_{2\alpha}$, so we have obtained the distributions with coefficients in $R_{2\alpha}$. Similarly, if we set $z_1=z_4=0$, then $g$ simplifies to $\lambda z_2N(z_3)$ with $z_2 \in \mu R_{2\alpha}$ and $z_3\in \nu_-R_\alpha$, so we get the distributions with coefficients in $t_\alpha R_{2\alpha}$. Clearly, $R_\alpha$ is a free $R_{2\alpha}$-module with basis $\{1,t_\alpha\}$, so we deduce the desired surjectivity.
\end{proof}

\begin{corollary}\label{cor_seminormal_all}
	\Cref{teo_seminormal_schubert} holds for all groups.
\end{corollary}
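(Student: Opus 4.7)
The plan is to establish \Cref{teo_seminormal_schubert} in the remaining odd unitary case by running the reductions of \Cref{cor_su3_free_seminormal} and then analyzing the Beilinson--Drinfeld deformation. First I apply \Cref{lem_reduction_step_form_etale} together with \'etale descent on $k$ to reduce to formal \'etaleness of $\Res^{\mathrm{sn}}_{F/k}G\to \Res_{F/k}G$ for simply connected residually split $G$. \Cref{prop_seminormal_reducao_rk1} then reduces to rank $1$ subgroups, of which the split case $\Res_{F_\alpha/F}\mathrm{SL}_{2,F_\alpha}$ is settled by \Cref{prop_lci}. Only $G=\Res_{F_{2\alpha}/F}\mathrm{SU}_{3,F_\alpha/F_{2\alpha}}$ remains to be treated.

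For this last case, I lift the problem to the Beilinson--Drinfeld loop space $\Res_{O^2_\circ/O}\calG$, where $\calG$ is the special parahoric $O$-model of $G$ with simply connected reductive quotient from \Cref{lem_seminormal_modelo_local_rk1}. By that lemma together with \Cref{lem_ind_mod_loc_semi_norm}, the semi-normalization $\Res^{\mathrm{sn}}_{O^2_\circ/O}\calG$ is a formally $O$-flat ind-scheme whose special fiber coincides with $\Res^{\mathrm{sn}}_{F/k}G$. It will suffice to prove that the $O$-relative semi-normalization map
\[
\Res^{\mathrm{sn}}_{O^2_\circ/O}\calG \longrightarrow \Res_{O^2_\circ/O}\calG
\]
is formally \'etale, since reducing modulo the uniformizer and invoking \Cref{lem_reduction_step_form_etale} then yields the result over $k$.

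Invoking the $O$-relative analogue of \Cref{lem_dist_iso_form_etale} (applicable because both sides are formally $O$-flat, by \Cref{lem_flat_bdloop} and \Cref{lem_seminormal_modelo_local_rk1} respectively), the question reduces to checking that the induced map of solid $O$-distributions is an isomorphism. Injectivity is automatic, as any semi-normalization map is a universal homeomorphism inducing trivial residue-field extensions, hence injective on formal sections. For surjectivity, \Cref{prop_gen_unip_bd_sc_red_quot} presents $\mathrm{Dist}(\Res_{O^2_\circ/O}\calG)$ as generated by $\mathrm{Dist}(\Res_{O^2_\circ/O}\calU^\pm)$. Since each $\calU^\pm$ is $O$-isomorphic to an affine space after fixing an ordering of positive roots, the unipotent BD loop ind-schemes $\Res_{O^2_\circ/O}\calU^\pm$ are reduced and in fact semi-normal; by naturality of the semi-normalization functor they lift canonically to $\Res^{\mathrm{sn}}_{O^2_\circ/O}\calG$ as closed ind-subgroups, so their distributions land in $\mathrm{Dist}(\Res^{\mathrm{sn}}_{O^2_\circ/O}\calG)$. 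Since the latter is a solid associative subalgebra, the desired surjectivity follows.

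The main obstacle is the $O$-relative bookkeeping. One must verify that the distribution machinery of Section~3 genuinely carries over to formally $O$-flat pointed ind-schemes (most critically the analogues of \Cref{lem_dist_iso_form_etale,lem_dist_fact_flat}), and confirm semi-normality of the BD unipotent loop groups together with their compatible lift into $\Res^{\mathrm{sn}}_{O^2_\circ/O}\calG$. The substantive $\mathrm{SU}_3$-specific input has already been absorbed into \Cref{prop_gen_unip_bd_sc_red_quot}, whose proof uses the exchange equation \eqref{funct_eq_su3} together with the big-cell factorization; once the bookkeeping above is in place, the conclusion then flows formally from the distribution dictionary followed by the mod-$t$ reduction.
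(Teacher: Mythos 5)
Your overall architecture coincides with the paper's: reduce via \Cref{lem_reduction_step_form_etale}, \Cref{prop_seminormal_reducao_rk1}, and \Cref{cor_su3_free_seminormal} to the odd unitary rank $1$ case; pass to the Beilinson--Drinfeld deformation for the special parahoric model with simply connected reductive quotient; use formal $O$-flatness and the $O$-variant of \Cref{lem_dist_iso_form_etale} to reduce to an isomorphism of solid $O$-distribution modules; and extract surjectivity from \Cref{prop_gen_unip_bd_sc_red_quot} since unipotent loop groups lift to the semi-normalization. The step where you differ from the paper, however, contains a genuine gap: your claim that injectivity of $\mathrm{Dist}(\Res^{\mathrm{sn}}_{O^2_\circ/O}\calG)\to\mathrm{Dist}(\Res_{O^2_\circ/O}\calG)$ is ``automatic'' because the semi-normalization is a universal homeomorphism with trivial residue field extensions is false. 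Semi-normalization is indeed injective on local rings (and on formal sections), but $\mathrm{Dist}$ is the solid $O$-\emph{dual} of the ring of formal sections, and the dual of an injection need not be injective. The standard counterexample is the cuspidal cubic: for $f\colon \bbA^1\to X=\Spec k[x,y]/(y^2-x^3)$ the distribution $\delta_1$ dual to $t$ kills the image of $\Gamma(\widehat{X}_0,\calO)\hookrightarrow k\pot{t}$, so $\mathrm{Dist}(f)(\delta_1)=0$ even though $f$ is a universal homeomorphism with trivial residue-field extensions.

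What the paper does instead is observe that the $O$-relative semi-normalization map is an \emph{isomorphism over the generic fiber}: the generic fiber of $\calG$ is an unramified group which by étale descent reduces to the split form $\mathrm{SL}_3$, and $\mathrm{SL}_3$ is $\mathrm{SU}_3$-free, so \Cref{cor_su3_free_seminormal} applies there. Once the distribution map is known to become an isomorphism after inverting $t$, torsion-freeness of the solid $O$-distribution modules (a consequence of formal flatness) forces injectivity. This piece of input — that $\mathrm{SU}_3$ over $F$ deforms along Beilinson--Drinfeld to $\mathrm{SL}_3$ over $F$ — is exactly what makes the Beilinson--Drinfeld detour useful, and it is missing from your proposal. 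Without it, the ``mod $t$ reduction'' you invoke at the end has nothing to leverage.

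Two smaller points: the existence of the special parahoric model with simply connected reductive quotient is not established in \Cref{lem_seminormal_modelo_local_rk1} but cited from the literature; and one should say a word on why formal flatness of $\Res^{\mathrm{sn}}_{O^2_\circ/O}\calG$ passes from flatness of the ind-scheme to flatness of the formal completion (the paper uses analytic irreducibility of excellent normal local rings).
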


\begin{proof}
	We may and do assume that $G$ is simply connected by \Cref{lem_reduction_step_form_etale}. By étale descent, we may also assume that $G$ is residually split. Furthermore, by \Cref{prop_seminormal_reducao_rk1} and \Cref{cor_su3_free_seminormal}, it suffices to treat the case where $G$ is a restriction of scalars of $\mathrm{SU}_3$. Let $\calG$ be a special parahoric model of $G$ such that $\calG_k$ has simply connected reductive quotient, whose existence is ensured by \cite[Lemma 4.11]{FHLR22}. Notice that \eqref{eq_seminormalization_bdloop} is a map of formally flat ind-schemes by \Cref{lem_ind_mod_loc_semi_norm} and \Cref{lem_flat_bdloop}. Indeed, for the semi-normal loop group, it is already flat from the beginning and this property passes to the formal completion using that excellent normal local rings are analytically irreducible. 
	
	We are going to show that the semi-normalization map over $O$ is formally étale. It is an isomorphism over $F$ by étale descent applied to \Cref{cor_su3_free_seminormal} for the split form $\mathrm{SL}_3$ of $G$. In particular, the map
	\begin{equation}
		\mathrm{Dist}(\Res_{O^2_\circ/O}^{\mathrm{sn}}\calG)\to \mathrm{Dist}(\Res_{O^2_\circ/O}\calG)
	\end{equation}
of torsion-free solid $O$-modules is a monomorphism. But \Cref{prop_gen_unip_bd_sc_red_quot} tells us that it is also surjective, as unipotent loop groups lift to the semi-normalization. We deduce formal étaleness by the $O$-flat variant of \Cref{lem_dist_iso_form_etale}. 
Passing to the special fibers, we deduce that $\Res_{F/k}^{\mathrm{sn}}G\to \Res_{F/k}G$ is formally étale, thanks to \Cref{lem_seminormal_modelo_local_rk1}. 
\end{proof}

	\bibliography{biblio.bib}
	\bibliographystyle{alpha}	
\end{document}